\newcounter{TmpEnumi}
\numberwithin{equation}{section}
\def\today{\number\day\space\ifcase\month\or   January\or February\or
   March\or April\or May\or June\or   July\or August\or September\or
   October\or November\or December\fi\   \number\year}
\renewcommand{\S}{\subset}
\newcommand{\SM}{\setminus}
\newcommand{\I}{\infty}
\theoremstyle{definition}
\newtheorem{thm}{Theorem}[section]
\newtheorem{lem}[thm]{Lemma}
\newtheorem{prp}[thm]{Proposition}
\newtheorem{dfn}[thm]{Definition}
\newtheorem{cor}[thm]{Corollary}
\newtheorem{rmk}[thm]{Remark}
\newtheorem{ntn}[thm]{Notation}
\newcommand{\beq}{\begin{equation}}
\newcommand{\eeq}{\end{equation}}
\newcommand{\beqr}{\begin{eqnarray*}}
\newcommand{\eeqr}{\end{eqnarray*}}
\newcommand{\bal}{\begin{align*}}
\newcommand{\eal}{\end{align*}}
\newcommand{\bei}{\begin{itemize}}
\newcommand{\eei}{\end{itemize}}
\newcommand{\limi}[1]{\lim_{{#1} \to \infty}}
\newcommand{\af}{\alpha}
\newcommand{\bt}{\beta}
\newcommand{\gm}{\gamma}
\newcommand{\dt}{\delta}
\newcommand{\ep}{\varepsilon}
\newcommand{\io}{\iota}
\newcommand{\ld}{\lambda}
\newcommand{\sm}{\sigma}
\newcommand{\ph}{\varphi}
\newcommand{\ps}{\psi}
\newcommand{\rh}{\rho}
\newcommand{\om}{\omega}
\newcommand{\ta}{\tau}
\newcommand{\Dt}{\Delta}
\newcommand{\Z}{{\mathbb{Z}}}
\newcommand{\R}{{\mathbb{R}}}
\newcommand{\C}{{\mathbb{C}}}
\newcommand{\N}{{\mathbb{Z}}_{> 0}}
\newcommand{\Nz}{{\mathbb{Z}}_{\geq 0}}
\newcommand{\id}{{\mathrm{id}}}
\newcommand{\Ad}{{\mathrm{Ad}}}
\newcommand{\dist}{{\mathrm{dist}}}
\newcommand{\sa}{{\mathrm{sa}}}
\newcommand{\supp}{{\mathrm{supp}}}
\newcommand{\rank}{{\mathrm{rank}}}
\newcommand{\Aut}{{\mathrm{Aut}}}
\newcommand{\Inn}{{\mathrm{Inn}}}
\newcommand{\ord}{{\mathrm{ord}}}
\newcommand{\Innb}{{\overline{\mathrm{Inn}}}}
\newcommand{\dirlim}{\displaystyle{\lim_{\longrightarrow}}}
\newcommand{\Mi}{M_{\infty}}
\newcommand{\andeqn}{\,\,\,\,\,\, {\mbox{and}} \,\,\,\,\,\,}
\newcommand{\ts}[1]{{\textstyle{#1}}}
\newcommand{\ds}[1]{{\displaystyle{#1}}}
\newcommand{\sssum}[2]{{\ts{ {\ds{\sum}}_{#1}^{#2} }}}
\newcommand{\ca}{C*-algebra}
\newcommand{\pj}{projection}
\newcommand{\hm}{homomorphism}
\newcommand{\wolog}{without loss of generality}
\newcommand{\Wolog}{Without loss of generality}
\newcommand{\ifo}{if and only if}
\newcommand{\mops}{mutually orthogonal \pj s}
\newcommand{\hme}{homeomorphism}
\newcommand{\mh}{minimal homeomorphism}
\newcommand{\cfn}{continuous function}
\newcommand{\hsa}{hereditary subalgebra}
\newcommand{\mvnt}{Murray-von Neumann equivalent}
\newcommand{\sfsuca}{stably finite simple unital \ca}
\newcommand{\trp}{tracial Rokhlin property}
\newcommand{\nptrpw}{weak
 tracial Rokhlin property with respect to}
\newcommand{\nptrp}{weak tracial Rokhlin property}
\newcommand{\idsfsuca}{infinite dimensional stably finite simple unital \ca}
\newcommand{\nzp}{nonzero projection}
\newcommand{\ov}{\overline}
\title[Tracial Rokhlin property]{The
      tracial Rokhlin property is generic}
\author{N.~Christopher Phillips}
\date{17~September 2012}
\address{Department of Mathematics, University of Oregon,
       Eugene OR 97403-1222, USA.}
\email[]{ncp@darkwing.uoregon.edu}
\subjclass{Primary 46L40;
 Secondary 46L55.}
\thanks{This material is based upon work supported by the
  US National Science Foundation under
  Grants DMS-0302401, DMS-0701076, and DMS-1101742.}
\begin{document}

\begin{abstract}
We prove several results of the following general form:
automorphisms of (or actions of ${\mathbb{Z}}^d$ on) certain kinds of
simple separable unital C*-algebras~$A$
which have a suitable version of the Rokhlin property are generic
among all automorphisms (or actions),
or in a suitable class of automorphisms.
That is,
the ones with the version of the Rokhlin property
contain a dense $G_{\delta}$-subset of the set of all such
automorphisms (or actions).

Specifically, we prove the following.
If $A$ is stable under tensoring with the Jiang-Su algebra~$Z,$
and has tracial rank zero,
then automorphisms with the tracial Rokhlin property are generic.
If $A$ has tracial rank zero,
or, more generally,
$A$~is tracially approximately divisible together with a
technical condition,
then automorphisms with the tracial Rokhlin property are generic
among the approximately inner automorphisms.
If $A$ is stable under tensoring
with the Cuntz algebra ${\mathcal{O}}_{\infty}$
or with a UHF algebra of infinite type,
then actions of ${\mathbb{Z}}^d$ on $A$ with the Rokhlin property
are generic among all actions of~${\mathbb{Z}}^d.$
We further give a related but more restricted result
for actions of finite groups.
\end{abstract}

\maketitle

\setcounter{section}{-1}

\section{Introduction}\label{Sec:Intro}

\indent
We prove the following five results,
each of which shows that,
under suitable circumstances,
``most'' automorphisms of an (often simple) \ca\  %
(or actions of $\Z^d$ or of a finite group)
have a suitable version
of the (tracial) Rokhlin property.
Specifically, for any separable \ca\  $A,$
give $\Aut (A)$ the topology of pointwise norm convergence.
This topology comes from a complete metric.
(See Lemma~\ref{L:AutTop} below.)
Using suitable products,
we also define complete metric topologies
on the set of actions of $\Z^d$ on $A$ for $d \in \N$
and on the set of actions of $G$ on $A$ for a finite group~$G.$
Our results are:
\begin{enumerate}
\item\label{Intro-1}
Let $A$ be a simple separable unital \ca\  with tracial
rank zero in the sense of~\cite{LnTTR},
let $Z$ be the Jiang-Su algebra~\cite{JS},
and assume $Z \otimes A \cong A.$
Then there is a dense $G_{\dt}$-set $G \S \Aut (A)$
such that every $\af \in G$ has the \trp\  of~\cite{OP1}.
(See Theorem~\ref{Main} below.)
\item\label{Intro-2}
Let $A$ be a simple separable unital \ca\  which is
tracially approximately divisible
satisfying a possibly redundant technical condition.
(In particular, this includes all
simple separable unital \ca s
with tracial rank zero.)
Then there is a dense $G_{\dt}$-set $G \S \Innb (A),$
the set of approximately inner automorphisms of $A,$
such that every $\af \in G$ has the \trp.
(See Theorem~\ref{InnMain} below.)
\item\label{Intro-3}
Let $A$ be a separable unital \ca.
Suppose that ${\mathcal{O}}_{\infty} \otimes A \cong A,$
or that there is a UHF algebra $D$ of infinite type
such that $D \otimes A \cong A.$
Then there exists a dense $G_{\dt}$-set in $G \S \Aut (A)$
such that every $\af \in G$ has the Rokhlin property.
(See Corollaries \ref{T-Main3} and~\ref{C-ZRPUHF} below.)
\item\label{Intro-4}
The result of~(\ref{Intro-3})
holds for actions of $\Z^d$ for any~$d \in \N,$
not just for actions of~$\Z.$
(See Theorems \ref{T-MainZd} and~\ref{T-RPUHF} below.)
\item\label{Intro-5}
Let $G$ be a finite group of cardinality~$r,$
and let $D$ be the $r^{\infty}$~UHF algebra.
The result of~(\ref{Intro-3})
holds for actions of $G$ on a separable unital \ca\  %
such that $D \otimes A \cong A.$
(See Theorem~\ref{T-RPFinGp} below.)
\end{enumerate}
Some of these results (such as~(\ref{Intro-1}),
the real rank zero case of~(\ref{Intro-2}),
and~(\ref{Intro-3})) are a few years old.
Others are much more recent,
and were added as we realized that more could be gotten
from similar methods.
The methods can also be used to prove further results of the same
general nature,
beyond those we consider in this paper.

These results are intended as a partial explanation of the following
observation.
Among classifiable simple stably finite \ca s
(see especially the direct limit classification of~\cite{EGL2}),
most Elliott invariants correspond to simple \ca s with real rank one
and many tracial states.
On the other hand,
\ca s which people actually construct for other purposes
often have real rank zero and very often have a unique tracial state.
The connection is that the crossed product of a simple unital \ca\  with
tracial rank zero by an action with the \trp\  necessarily
has real rank zero (this follows from Theorem~4.5 of~\cite{OP1}),
and often has tracial rank zero
(combine Theorems 2.9 and~3.4 of~\cite{LO}).
Moreover, if the original algebra has a unique tracial state,
the same is true for the crossed product (Theorem~2.18 of~\cite{OP2}).

Some results of this general nature are known for homeomorphisms of
compact metric spaces.
The idea that something like our results might be true came from
the proof in~\cite{FH},
where it is shown that certain manifolds admit
uniquely ergodic minimal diffeomorphisms.
The method of proof is to construct a certain
set of diffeomorphisms, and to show that the
uniquely ergodic minimal diffeomorphisms
form a dense $G_{\dt}$-set in this set.

If $X$ is the Cantor set,
then Corollary~4.5(1) of~\cite{BDK}
implies that the uniquely ergodic minimal homeomorphisms
contain a dense $G_{\dt}$-subset of the closure of the set of all
minimal homeomorphisms in the topology of uniform
convergence of the \hme s and their inverses,
which corresponds to the topology of pointwise convergence
of automorphisms of $C (X)$ and their inverses.
See the end of the introduction to~\cite{BDK} for notation,
see Definition~1.2 of~\cite{BDK} for the topology $\ta_w,$
and see the beginning of Section~3 of~\cite{BDK} for
the definition of the set ${\mathcal{O}}d.$
However, by Corollary~4.2 of~\cite{BDK}, the \mh s of $X$
are nowhere dense in the set of all \hme s of~$X.$

The proof of the result for $Z$-stable \ca s with tracial rank zero
depends on tensoring with an automorphism of~$Z$
which has a version of the \trp.
No automorphism of $Z$ can have the \trp\  %
as it is defined in~\cite{OP1},
because $Z$ has no nontrivial \pj s.
However, the tensor shift on $\bigotimes_{n \in \Z} Z$
has a version of the \trp\  %
in which one uses positive elements in place of \pj s in the definition.
For an automorphism
of a simple separable unital \ca\  with tracial rank zero,
this property implies the usual \trp.
The result on the shift may be of independent interest,
although one hopes that it has a property strong enough
to imply the Rokhlin property in the presence of tracial rank zero.

This paper is organized as follows.
In Section~\ref{Sec:WTRP},
we introduce the version of the \trp\  that we use for
the tensor shift on~$\bigotimes_{n \in \Z} Z,$
and prove that it implies the \trp\  %
in the presence of tracial rank zero.
In Section~\ref{Sec:TShift},
we prove that the shift does in fact have this property.
The results are stated in somewhat greater generality.
In particular, they apply to the tensor shift
on $\bigotimes_{n \in \Z} A$
when $A$ is simple, unital, infinite dimensional,
and has a unique tracial state.
However, something much more general ought to be true.
Section~\ref{Sec:Main} contains the proof that the \trp\  is generic
for $Z$-stable simple \ca s with tracial rank zero,
using the results of the first two sections.

In Section~\ref{Sec:Old},
we prove the result~(\ref{Intro-2}) above,
and in Section~\ref{Sec:PISCA}
we prove the results (\ref{Intro-3}), (\ref{Intro-4}),
and~(\ref{Intro-5}) above.
These sections are largely independent
of Sections~\ref{Sec:WTRP}, \ref{Sec:TShift}, and~\ref{Sec:Main},
depending only on the basic setup at the beginning
of Section~\ref{Sec:Main} or its analog.
In Section~\ref{Sec:Old},
the main class of interest is the simple separable unital \ca s
with tracial rank zero.
The ideas of the proof seem clearer in the context of what we
call tracially approximately divisible \ca s;
these are the ``tracial'' analog
of the approximately divisible \ca s of~\cite{BKR}.
We therefore develop the basic theory of these algebras.
Other results can be proved,
but we do not yet know other interesting examples,
so we do not go farther in this direction.
In Section~\ref{Sec:PISCA},
the methods are similar to,
but easier than,
those of Section~\ref{Sec:Main}.

We also mention that it will be proved in~\cite{KtPh}
that a generic automorphism
of a unital approximately divisible AF~algebra
has the version of the Rokhlin property
given in Definition~2.5 of~\cite{Iz0}.

This paper has some overlap with independent work
in Section~3 of~\cite{HWZ}.
Let $A$ be a separable unital \ca.
It is proved in Theorem~3.4 there that if
$A$ is $Z$-stable,
then actions of $\Z$ on~$A$
with Rokhlin dimension~$1$
(see Definition~2.3 of~\cite{HWZ})
are generic among all actions of~$\Z,$
and that if $A$ is stable under tensoring
with a UHF algebra of infinite type,
then automorphisms with the Rokhlin property
are generic.
The second result is our Corollary~\ref{C-ZRPUHF}
(part of (\ref{Intro-3}) above).
It is probably true that
if $A$ is simple, unital, and has tracial rank zero,
then every action of~$\Z$ with finite Rokhlin dimension as in~\cite{HWZ}
in fact has the tracial Rokhlin property,
so that, with work, the first result could be used
to give a different proof our Theorem~\ref{Main}
((\ref{Intro-1})~above).

The main part of the proof of Theorem~3.4 of~\cite{HWZ}
is not very different from our proofs of
Proposition~\ref{P:TRPDense}
and Lemma~\ref{L-2910PIApprox},
which are the main steps in the proofs
of Theorems~\ref{Main}, \ref{T-MainZd}, and~\ref{T-RPUHF}.
However,
our proofs are arranged differently,
and in particular apply to actions of any discrete group
for which one has a reasonable definition
of some form of the Rokhlin property
and an action with this property on a strongly selfabsorbing \ca,
while the proof in~\cite{HWZ}
seems to be limited to actions of~$\Z.$
For example,
one could write down a definition
of Rokhlin dimension for actions of~$\Z^d$
by suitably combining ideas from Definition~2.3 of~\cite{HWZ}
and Definition~\ref{D-2908-Zd-RP},
find an action of~$\Z^d$ on~$Z$
with finite Rokhlin dimension,
and use our method to show that such actions are
generic on separable unital $Z$-stable \ca s.

Let $A$ be a \ca.
For \pj s $p, q \in A,$
we write $p \precsim q$
to mean that $p$ is \mvnt\  %
to a sub\pj\  of~$q.$
If $A$ is unital,
then $T (A)$ denotes the tracial state space of~$A,$
and for a unitary $u \in A,$
we denote by $\Ad (u)$ the inner automorphism $a \mapsto u a u^*.$

We will use several times the $L^2$-norm (or seminorm)
associated with a tracial state $\ta$ of a \ca\  $A,$ given by
$\| a \|_{2, \ta} = \ta (a^* a)^{1/2}.$
See the discussion before Lemma V.2.20 of~\cite{Tk}
for more on this seminorm in the von Neumann algebra context.
All the properties we need are immediate from its identification
with the seminorm in which one completes $A$ to obtain the
Hilbert space $H_{\ta}$ for the
Gelfand-Naimark-Segal representation associated with $\ta,$
and from the relation $\ta (b a) = \ta (a b).$
In particular, we always have
$\| a b c \|_{2, \ta}
 \leq \| a \| \cdot \| b \|_{2, \ta} \cdot \| c \|.$
Note that
$| \ta (a) | \leq \| a \|_{2, \ta},$
since Cauchy-Schwarz gives
\[
| \ta (a) |^2 \leq \ta (a^* a) \ta (1_A^* 1_A) = \| a \|_{2, \ta}^2.
\]

The theorem here was inspired by a conversation with
George Elliott during a visit to the Fields Institute during
March 2004.
I am grateful to Thierry Giordano for calling my
attention to the paper~\cite{BDK},
and to Nate Brown for supplying the proof of Lemma~\ref{L:NoMinPj}.
I am also grateful to Takeshi Katsura
and Ilan Hirshberg for valuable discussions.

\section{A generalization of
   the tracial Rokhlin property}\label{Sec:WTRP}

\indent
We recall the definition of the \trp\  from
Definition~1.1 of~\cite{OP1}:

\begin{dfn}\label{TRPDfn}
Let $A$ be a \sfsuca\   and let $\af \in \Aut (A).$
We say that $\af$ has the {\emph{tracial Rokhlin property}}
if for every finite set $F \S A,$ every $\ep > 0,$
every $n \in \N,$
and every nonzero positive element $x \in A,$
there are \mops\  $e_0, e_1, \ldots, e_n \in A$ such that:
\begin{enumerate}
\item\label{2906TRPDfn-1}
$\| \af (e_j) - e_{j + 1} \| < \ep$ for $j = 0, 1, \ldots, n - 1.$
\item\label{2906TRPDfn-2}
$\| e_j a - a e_j \| < \ep$ for $j = 0, 1, \ldots, n$ and all $a \in F.$
\item\label{2906TRPDfn-3}
With $e = \sum_{j = 0}^{n} e_j,$ the \pj\  $1 - e$ is \mvnt\  to a
\pj\  in ${\overline{x A x}}.$
\end{enumerate}
\end{dfn}

We do not say anything about $\af (e_n).$

\begin{dfn}\label{D:NPjTRP}
Let $A$ be a separable unital \ca,
let $\af \in \Aut (A),$ and let $T \S T (A).$
We say that $\af$ has the
{\emph{weak tracial Rokhlin property with respect to $T$}}
if for every finite set $S \S A,$ every $\ep > 0,$
and every $n \in \N,$
there are $c_0, c_1, \ldots, c_n \in A$ such that:
\begin{enumerate}
\item\label{DR:Pos}
$0 \leq c_j \leq 1$ for $j = 0, 1, \ldots, n.$
\item\label{DR:Orth}
$c_j c_k = 0$ for $j, k = 0, 1, \ldots, n$ with $j \neq k.$
\item\label{DR:Shift}
$\| \af (c_j) - c_{j + 1} \| < \ep$ for $j = 0, 1, \ldots, n - 1.$
\item\label{DR:Comm}
$\| c_j a - a c_j \| < \ep$ for $j = 0, 1, \ldots, n$ and all $a \in S.$
\item\label{DR:Rem}
With $c = \sum_{j = 0}^{n} c_j,$ we have $\ta (1 - c) < \ep$
for all $\ta \in T.$
\end{enumerate}
\end{dfn}

The main differences are that the $c_j$ need not be \pj s,
and that the size of the remainder is explicitly controlled by traces.

This definition is clearly not useful if there are not
enough tracial states on $A.$
Even for a simple stably finite separable unital \ca,
the correct property should require, instead of
condition~(\ref{DR:Rem}),
that for every nonzero positive element $x \in A,$
one can require that $1 - c$ be equivalent in a suitable sense,
perhaps in the Cuntz semigroup of~$A,$
to an element in the \hsa\  of $A$ generated by~$x.$

\begin{prp}\label{P:NPImpTRP}
Let $A$ be a simple separable unital \ca\  with tracial
rank zero in the sense of~\cite{LnTTR}
(tracially~AF in~\cite{LnTAF}).
Let $\af \in \Aut (A)$ have the \nptrpw\  $T (A).$
Then $\af$ has the \trp\  (Definition~\ref{D:NPjTRP}).
\end{prp}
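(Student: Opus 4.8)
We start from an automorphism $\af$ with the weak tracial Rokhlin property with respect to $T(A)$ (Definition~\ref{D:NPjTRP}), which gives us positive contractions $c_0,\dots,c_n$ that are mutually orthogonal, approximately permuted by $\af$, approximately central, and whose complement $1-c$ is small in every trace. We must upgrade these to genuine mutually orthogonal projections $e_0,\dots,e_n$ satisfying the conditions of Definition~\ref{TRPDfn}, crucially converting the trace-smallness of $1-c$ into the Cuntz-subequivalence statement that $1-e$ is Murray-von Neumann equivalent to a projection in $\ov{xAx}$. Let me sketch this.

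**The plan.** The natural approach is to use the defining property of tracial rank zero to produce, near the given positive elements, a finite-dimensional subalgebra carrying projections that approximate the $c_j$. First I would fix a finite set $F\S A$, an $\ep>0$, an $n\in\N$, and a nonzero positive $x\in A$. I would apply the definition of tracial rank zero: there is a finite-dimensional subalgebra $B\S A$ with unit a projection $p$ such that (a) the elements $c_j$ and the elements of $F$ almost commute with $p$ and lie within a small tolerance of $B$, and (b) the complement $1-p$ is Cuntz-small relative to~$x$, i.e. $1-p\precsim$ a projection in $\ov{xAx}$ up to the usual tracial control. Compressing the $c_j$ into $B$ and using functional calculus, the images $pc_jp$ are approximately mutually orthogonal positive contractions in a finite-dimensional algebra; by standard perturbation (rounding spectra away from $1/2$) I would replace them by honest mutually orthogonal projections $e_0,\dots,e_n\in B$ close to the $c_j$. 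Since $\af(c_j)\approx c_{j+1}$ and $e_j\approx c_j$, the relation $\|\af(e_j)-e_{j+1}\|<\ep$ follows, giving condition~(\ref{2906TRPDfn-1}); the approximate centrality~(\ref{2906TRPDfn-2}) is inherited from the $c_j$ and the closeness to~$B$.

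**The remainder estimate.** The heart of the argument is condition~(\ref{2906TRPDfn-3}). With $e=\sum_j e_j$, I would control $1-e$ by combining two pieces: the original tracial smallness $\ta(1-c)<\ep$ for all $\ta\in T(A)$, and the fact that $e\approx c$ forces $1-e$ to be small in trace as well. Here is where tracial rank zero is indispensable: in a simple unital C*-algebra with tracial rank zero (hence real rank zero, stable rank one, and strict comparison of projections by traces), a projection that is uniformly small in every trace is Murray-von Neumann equivalent to a subprojection of any prescribed nonzero projection $\ov{xAx}$. So I would invoke comparison to conclude $1-e\precsim$ a projection in $\ov{xAx}$, and then, using that $1-e$ is itself a projection together with strict comparison, promote subequivalence to the required Murray-von Neumann equivalence with a projection in $\ov{xAx}$.

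**The main obstacle.** The genuinely delicate step is the simultaneous control: I need the finite-dimensional algebra $B$ to approximate \emph{all} the $c_j$ and the set $F$ at once, to carry projections close to each $c_j$ \emph{and} to have its complement $1-p$ small relative to the given $x$, while the trace estimate on $1-c$ survives the passage from $c$ to $e$ uniformly over $T(A)$. Coordinating these tolerances—choosing the approximation parameters for tracial rank zero small enough (depending on $n$, $\ep$, $F$, and $x$) that the rounded projections remain orthogonal, close to the $c_j$, and tracially small after summing $n+1$ of them—is the bookkeeping core of the proof. The comparison theory is standard once phrased correctly, so I expect the real work to be in assembling the estimates rather than in any single inequality.
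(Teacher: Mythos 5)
There is a genuine gap, and it is at the step you describe as ``standard perturbation.'' The elements $c_0,\dots,c_n$ supplied by Definition~\ref{D:NPjTRP} are merely positive contractions; nothing forces their spectra to concentrate near $\{0,1\}$, so there is in general \emph{no} projection within small operator norm of $c_j$ (compressing into a finite-dimensional corner does not help: $\diag(0,\tfrac12,1)$ is at distance $\tfrac12$ from every projection). Indeed, in the intended application the $c_j$ live in a copy of the Jiang-Su algebra, which has no nontrivial projections at all. Since your verification of conditions (\ref{2906TRPDfn-1}) and~(\ref{2906TRPDfn-2}) rests entirely on $\| e_j - c_j \|$ being small in operator norm, the argument collapses at that point; the ``bookkeeping'' you defer cannot be made to work no matter how the tolerances are coordinated.

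The paper's proof avoids exactly this obstruction by giving up on operator-norm closeness. It uses real rank zero to choose projections $p_k \in \overline{c_k A c_k}$ (automatically mutually orthogonal) with $p_k c_k \approx c_k$; such $p_k$ are close to $c_k$ only in the tracial seminorms $\| \cdot \|_{2, \ta}$, uniformly over $\ta \in T(A)$, via a trace computation. It then completes the family to a partition of unity and invokes Theorem~2.14 of~\cite{OP2}, which states that for a simple separable unital \ca\ with tracial rank zero, tracial ($L^2$-seminorm) versions of the Rokhlin conditions for a partition of unity by projections already imply the \trp. That external theorem is the essential input your proposal is missing: it is what converts uniform tracial estimates into the operator-norm statements of Definition~\ref{TRPDfn}, and without it (or a substitute of comparable strength) the passage from the $c_j$ to genuine Rokhlin projections cannot be carried out. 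Your treatment of condition~(\ref{2906TRPDfn-3}) via comparison of projections by traces is fine in spirit, but it is downstream of the broken step.
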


\begin{proof}
We verify the condition of Theorem~2.14 of~\cite{OP2}.
Thus, let $S \S A$ be finite and let $\ep > 0.$
Choose $\ep_0 > 0$ so small that
$4 \sqrt{(2 n + 3) \ep_0} + 5 \ep_0 \leq \ep.$
Apply Definition~\ref{D:NPjTRP} with $S$ as given,
and with $\ep_0$ in place of $\ep,$
obtaining elements $c_0, c_1, \ldots, c_n.$
For $k = 0, 1, \ldots, n,$ use the fact that $A$ has real rank zero
(Theorem~3.4 of~\cite{LnTAF})
to choose a \pj\  $p_k \in {\overline{c_k A c_k}}$
such that
$\| p_k c_k - c_k \| < \ep_0$ and $\| c_k p_k - c_k \| < \ep_0.$
Then $\| p_k c_k p_k - c_k \| < 2 \ep_0.$
Arguing as in the proof of Lemma~1.8 of~\cite{PhtRp4}
(and noting that here we have $n + 1$ elements instead of $n$),
the $p_k$ are orthogonal \pj s such that
$\| p_k - c_k \|_{2, \ta} < \sqrt{(2 n + 3) \ep_0} + 2 \ep_0$
for $k = 0, 1, \ldots, n$ and $\ta \in T (A).$
Along the way, we also get the estimate
\[
1 - \sum_{j = 0}^n \ta (p_j)
 \leq 1 - \sum_{j = 0}^n \ta (p_j c_j p_j)
 < (2 n + 3) \ep_0.
\]
Therefore
\[
\left\| 1 - \sssum{j = 0}{n} \ta (p_j) \right\|_{2, \ta}
 < \sqrt{(2 n + 3) \ep_0}
\]
for $\ta \in T (A).$

Set $e_k = p_k$ for $k = 1, 2, \ldots n,$ and set
$e_0 = 1 - \sum_{j = 1}^n p_j.$
Then
\[
\sum_{j = 0}^n e_j = 1
\andeqn
\| e_0 - p_0 \|_{2, \ta} < \sqrt{(2 n + 3) \ep_0}.
\]
Thus, in any case, we have
$\| e_k - c_k \|_{2, \ta} < 2 \sqrt{(2 n + 3) \ep_0} + 2 \ep_0$
for $k = 0, 1, \ldots, n$ and $\ta \in T (A).$

Now we can verify
conditions~(1), (2), and~(3) of Theorem~2.14 of~\cite{OP2}.
Condition~(3) has already been done.
For the others,
let $\ta \in T (A).$
Then for $k = 0, 1, \ldots, n - 1$ we have
\begin{align*}
\| \af (e_k) - e_{k + 1} \|_{2, \ta}
 & \leq \| e_k - c_k \|_{2, \ta \circ \af}
       + \| e_{k + 1} - c_{k + 1} \|_{2, \ta}
       + \| \af (c_k) - c_{k + 1} \|          \\
 & < 2 \left(2 \sqrt{(2 n + 3) \ep_0} + 2 \ep_0 \right) + \ep_0
   \leq \ep,
\end{align*}
and
for $k = 0, 1, \ldots, n$ and $a \in S$ we have
\[
\| [e_k, a] \|_{2, \ta}
  \leq 2 \| e_k - c_k \|_{2, \ta} + \| [c_k, a] \|
  < 2 \left(2 \sqrt{(2 n + 3) \ep_0} + 2 \ep_0 \right) + \ep_0
  \leq \ep.
\]
An application of Theorem~2.14 of~\cite{OP2}
completes the proof.
\end{proof}

\section{Tensor shifts}\label{Sec:TShift}

\indent
The purpose of this section is to give conditions
under which the two sided shift
on an infinite tensor product of \ca s has the \nptrp\  %
with respect to an infinite tensor product of
copies of the same tracial state.
Better results may be possible;
one would hope for a result using arbitrary tracial states
on the infinite tensor product.
If better results are true, they need more work.
In the case we are most interested in,
that of the Jiang-Su algebra $Z,$
there is a unique tracial state.

\begin{dfn}\label{D:Shift}
Let $A$ be a unital \ca.
We define the {\emph{minimal shift}} on the infinite
minimal tensor product $B = \bigotimes_{n \in \Z} A$
as follows.
Set $B_n = A^{\otimes 2 n},$ the (minimal) tensor product
of $2 n$ copies of $A.$
(Take $B_0 = \C.$)
Define $\ph_n \colon B_n \to B_{n + 1}$
by $\ph_n (a) = 1_A \otimes a \otimes 1_A$ for $a \in B_n.$
Identify $B = \dirlim B_n$ via the maps $\ph_n.$
Then take $\sm \colon B \to B$ to be the direct limit of
the maps $\sm_n \colon B_n \to B_{n + 1}$ defined
by $\sm_n (a) = 1_A \otimes 1_A \otimes a$ for $a \in B_n.$

We define the {\emph{maximal shift}} on the infinite
maximal tensor product in the same manner.

When $A$ is nuclear, we simply refer to the {\emph{shift}}.
\end{dfn}

\begin{lem}\label{L:ShiftOnInterval}
Let $X = [0, 1]^{\Z},$
and let $h \colon X \to X$ be the shift,
given by $h (x)_k = x_{k - 1}$ for $x = (x_k)_{k \in \Z} \in X$
and $k \in \Z.$
Let $\mu_0$ be a Borel probability measure on $[0, 1],$
and let $\mu$ be the infinite product measure on $X.$
Suppose $\mu_0 (\{ t \}) = 0$ for all $t \in [0, 1].$
Then for every $\ep > 0$ and every $n \in \N,$
there are $N \in \N$ and a closed set $Y_0 \S [0, 1]^N$
such that the set
\[
Y = \prod_{k \leq 0} [0, 1]
        \times Y_0 \times \prod_{k \geq N + 1} [0, 1]
\]
has the properties:
\begin{enumerate}
\item\label{LI:Disj}
$Y, \, h (Y), \, \ldots, \, h^{n - 1} (Y)$ are disjoint.
\item\label{LI:Big}
$\mu \big( X \setminus
 \big[ Y \cup h (Y) \cup \cdots \cup h^{n - 1} (Y) \big] \big) < \ep.$
\end{enumerate}
\end{lem}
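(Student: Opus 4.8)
The plan is to find a single "Rokhlin tower" over the shift on $X = [0,1]^{\Z}$ by thickening a cylinder set built around one coordinate, say the $0$th coordinate. The key insight is that for the $\Z$-shift, disjointness of $Y, h(Y), \ldots, h^{n-1}(Y)$ can be arranged by forcing the defining coordinates to \emph{record time modulo $n$}. Concretely, I would choose a single Borel set $E \S [0,1]$ with $\mu_0(E)$ close to $1/n$ (possible with no atoms, since then $\mu_0$ is nonatomic and its distribution function is continuous, so every value in $[0, \mu_0([0,1])]$ is attained as $\mu_0$ of an interval), and then express the return-time structure to make the levels disjoint.

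\smallskip

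\textbf{Construction of the tower.} More precisely, I would take a large window of coordinates $\{1, 2, \ldots, N\}$ and define $Y_0 \S [0,1]^N$ as a cylinder that constrains finitely many coordinates so that, as one applies the shift, the point leaves $Y$ after fewer than $n$ steps and the translates $h^j(Y)$ touch disjoint "phase" configurations in the window. A clean way to enforce disjointness is to fix a closed subset $F \S [0,1]$ (approximating a set of measure slightly under $1/n$ from below, taking closures to meet the requirement that $Y_0$ be closed) and to require that exactly one designated coordinate among a block lie in $F$; shifting by $j$ moves which coordinate is constrained, and if the block has length $\geq n$ these constraints are mutually exclusive, giving property~(\ref{LI:Disj}). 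The product structure $Y = \prod_{k \leq 0}[0,1] \times Y_0 \times \prod_{k \geq N+1}[0,1]$ is automatic once $Y_0$ is defined as such a cylinder inside $[0,1]^N$.

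\smallskip

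\textbf{The measure estimate.} For property~(\ref{LI:Big}), since $\mu$ is a product measure and $h$ preserves $\mu$, each level has $\mu(h^j(Y)) = \mu(Y)$, so the complement of the union has measure $1 - n\,\mu(Y)$. I would arrange $\mu(Y)$ to be close to $1/n$ by tuning $\mu_0(F)$ upward toward $1/n$; the only loss comes from passing to closed sets and from the finite-window truncation. The nonatomicity hypothesis $\mu_0(\{t\}) = 0$ is exactly what lets me hit $\mu_0$-measures arbitrarily close to any target in the range of the (now continuous) distribution function, so I can make $n\,\mu(Y) > 1 - \ep$.

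\smallskip

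\textbf{Main obstacle.} The delicate point is the simultaneous reconciliation of three competing demands: the levels must be \emph{exactly} disjoint (not just approximately), $Y_0$ must be genuinely \emph{closed}, and the total measure $n\,\mu(Y)$ must exceed $1 - \ep$. Exact disjointness wants the phase-constraint sets to be mutually exclusive, which is easy with disjoint target sets in $[0,1]$; but packing $n$ disjoint sets each of $\mu_0$-measure near $1/n$ into $[0,1]$ while keeping them closed, and while keeping the product measure of the tower near $1/n$, requires care in the bookkeeping. I expect the real work to be in choosing the window length $N$ and the closed constraint sets so that the combinatorics of "which coordinate is constrained after shifting" yields clean disjointness with no measure sacrificed beyond $\ep$; the nonatomicity is used precisely to absorb the closure-approximation loss.
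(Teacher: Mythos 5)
There is a genuine gap here, and it is quantitative rather than a matter of bookkeeping. Because $\mu$ is a product measure, the coordinates are independent, so every constraint you impose on the window to force exact disjointness of $Y, h(Y), \ldots, h^{n-1}(Y)$ costs a multiplicative factor of measure. In the simplest version of your construction ($Y = \{x \colon x_1 \in F,\ x_2, \ldots, x_n \notin F\}$, which is what is needed to make the translates disjoint), one gets $\mu(Y) = p(1-p)^{n-1}$ with $p = \mu_0(F)$, so the union of the $n$ translates has measure $n p (1-p)^{n-1} \leq (1 - 1/n)^{n-1}$; this is $\leq \tfrac{1}{2}$ for $n = 2$ and tends to $e^{-1}$ as $n \to \infty$. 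The same ceiling applies to the ``exactly one designated coordinate of the block lies in $F$'' variant: the probability that a window of independent coordinates contains exactly one marker is at most roughly $e^{-1}$ no matter how $p$ and the window length are tuned. Hence condition~(2) fails for every $\ep$ smaller than about $1 - e^{-1}$, which is most of the range the lemma must cover. Nonatomicity of $\mu_0$ does not help: it lets you realize any value of $\mu_0(F)$, but no value works. Tuning $\mu_0(F)$ ``upward toward $1/n$'' without the exclusion constraints would indeed give $\mu(Y) = 1/n$, but then the translates are not disjoint, since $x_1 \in F$ and $x_2 \in F$ are independent events of positive probability. This tension is not reconcilable within the scheme you describe.

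The paper's way around this is to use a ``marker'' that occurs in the window exactly once with probability one, rather than with probability $\approx e^{-1}$: the location of the minimum. With $N = n(m+1) - 1$, it takes $E_0 \subset [0,1]^N$ to be the set where $\min\{x_k \colon n \mid k\} < \min\{x_k \colon n \nmid k\}$, i.e., the (almost surely unique, by nonatomicity) minimum over the window sits at an index divisible by $n$. By exchangeability of the coordinates each index is equally likely to carry the minimum, so $\mu_0^N(E_0) = m/N$, whence $n\, \mu_0^N(E_0) = nm/(n(m+1)-1) > 1 - 1/(m+1)$, which can be made $> 1 - \ep/2$; disjointness of the $n$ translates follows from an elementary comparison of minima over shifted index sets; and inner regularity then supplies the closed set $Y_0 \subset E_0$ with negligible further loss. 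So nonatomicity enters not to hit a target measure but to guarantee that the argmin is almost surely unique (the diagonal is null), which is what makes the symmetry computation of $\mu_0^N(E_0)$ exact. You would need to replace your marker construction by something of this kind (or by a substantially more elaborate last-marker-modulo-$n$ argument in the spirit of the usual proof of the Rokhlin lemma) for the measure estimate to go through.
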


\begin{proof}
For each $k,$ let $\mu_0^k$ be the product measure on $[0, 1]^k.$

Choose $m \in \N$ such that $\frac{1}{m} < \frac{\ep}{2}.$
Set $N = n (m + 1) - 1.$
Define $E_0 \S [0, 1]^N$ by
\[
E_0 = \big\{ (x_1, x_2, \ldots, x_N) \in [0, 1]^N \colon
      \min ( \{ x_k \colon n | k \} )
          < \min ( \{ x_k \colon n \nmid k \} ) \big\}.
\]

We claim that $\mu_0^N (E_0) \geq m N^{-1}.$
To prove the claim, for $l = 1, 2, \ldots, N$ set
\[
F_l = \big\{ (x_1, x_2, \ldots, x_N) \in [0, 1]^N \colon
      x_l < \min ( \{ x_k \colon k \neq l \} ) \big\}.
\]
Further set
\[
\Dt = \big\{ (x_1, x_2, \ldots, x_N) \in [0, 1]^N \colon
      {\mbox{There are $k \neq l$ such that $x_k = x_l$}} \big\}.
\]
Fubini's Theorem and the hypothesis
$\mu_0 (\{ t \}) = 0$ for all $t \in [0, 1]$
imply that
\[
\mu_0^2 \big( \{ (x, x) \colon x \in [0, 1] \} \big) = 0.
\]
It easily follows that $\mu_0^N (\Dt) = 0.$
The \hme\  %
\[
\sm_N (x_1, x_2, \ldots, x_N) = (x_N, x_1, x_2, \ldots, x_{N - 1})
\]
preserves $\mu_0^N$ and transitively permutes the sets $F_l.$
Therefore they all have the same measure.
Since the $F_l$ are disjoint and
\[
[0, 1]^N \setminus \Dt \S \bigcup_{l = 1}^N F_l,
\]
it follows that $\mu_0^N (F_l) = N^{-1}$ for $l = 1, 2, \ldots, N.$
Clearly
\[
F_n \cup F_{2 n} \cup \cdots \cup F_{m n} \S E_0,
\]
so the claim follows.

Next, set
\[
E = \prod_{k \leq 0} [0, 1]
        \times E_0 \times \prod_{k \geq N + 1} [0, 1].
\]
We claim that $h^l (E) \cap E = \varnothing$
for $l = 1, 2, \ldots, n - 1.$
Let $x \in E.$
Using $1 \leq l < n$ and $N - l \geq m n$ at the second step,
$x \in E$ at the third step,
and again using $1 \leq l < n$ at the fourth step,
we get
\begin{align*}
\min \big( \{ h^l (x)_k \colon
   {\mbox{$1 \leq k \leq N$ and $n \nmid k$}} \} \big)
& = \min \big( \{ x_{k - l} \colon
   {\mbox{$1 \leq k \leq N$ and $n \nmid k$}} \} \big)
     \\
& \leq \min (x_n, \, x_{2 n}, \, \ldots, \, x_{m n})
     \\
& < \min \big( \{ x_k \colon
         {\mbox{$1 \leq k \leq N$ and $n \nmid k$}} \} \big)
     \\
& \leq \min ( x_{n - l}, \, x_{2 n - l}, \, \ldots, \, x_{m n - l} )
     \\
& = \min \big( \{ h^l (x)_k \colon
   {\mbox{$1 \leq k \leq N$ and $n | k$}} \} \big).
\end{align*}
This shows $h^l (x) \not\in E,$
and proves the claim.

It follows that
$E, \, h (E), \, \ldots, \, h^{n - 1} (E)$ are disjoint.

Now use inner regularity of $\mu$ to choose a compact
set $Y_0 \S [0, 1]^N$
such that $\mu_0^N (Y_0) > \mu (E_0) - \frac{1}{2} n^{-1} \ep.$
Define $Y$ as in the statement of the lemma.
Clearly $Y, \, h (Y), \, \ldots, \, h^{n - 1} (Y)$ are disjoint.
Also, using the first claim in the proof at the fourth step,
$N = n (m + 1) - 1$ at the fifth step, and the choice of $m$
at the sixth step,
we have
\begin{align*}
\mu \big( X \setminus
 \big[ Y \cup h (Y) \cup \cdots \cup h^{n - 1} (Y) \big] \big)
& = 1 - n \mu (Y)
  = 1 - n \mu_0^N (Y_0)
    \\
& < 1 - n \mu_0^N (E_0) + \frac{\ep}{2}
  \leq 1 - \frac{n m}{N} + \frac{\ep}{2}
    \\
& < \frac{1}{m + 1} + \frac{\ep}{2} < \ep.
\end{align*}
This completes the proof.
\end{proof}

\begin{prp}\label{P:TRPWithNoAtom}
Let $A$ be a unital \ca,
and let $\ta_0 \in T (A).$
Suppose that there is $a \in A$ with $0 \leq a \leq 1$
and such that the spectral measure $\mu_0$ on $[0, 1],$
defined by $\int_0^1 f \, d \mu_0 = \ta_0 (f (a))$
for $f \in C ([0, 1]),$
satisfies $\mu_0 (\{ t \}) = 0$ for all $t \in [0, 1].$
Let $\sm$ be either the minimal shift or the maximal shift
on $B = \bigotimes_{n \in \Z} A$
(minimal or maximal tensor product, as appropriate),
as in Definition~\ref{D:Shift}.
Then $\sm$ has the \nptrpw\  the infinite tensor product
tracial state $\ta$ on $B$ obtained from $\ta_0.$
\end{prp}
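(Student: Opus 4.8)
The plan is to reduce the operator-algebraic statement to the purely measure-theoretic statement already proved in Lemma~\ref{L:ShiftOnInterval}, by using the element $a$ to build a commutative model inside $B$ on which the shift $\sm$ acts like the classical shift $h$ on $[0,1]^{\Z}$. Concretely, given $a \in A$ with $0 \le a \le 1$ and spectral measure $\mu_0$ having no atoms, I would work with the abelian \ca\ $C^*(1,a) \cong C(\spec(a))$, and more to the point with the commutative subalgebra
\[
C \S B = {\ts\bigotimes_{n \in \Z} A}
\]
generated by the shifted copies $\sm^k(a)$ for $k \in \Z$. Since the $\sm^k(a)$ pairwise commute and each has spectrum contained in $[0,1]$, there is a unital $*$-hm\ from $C([0,1]^{\Z})$ into $B$ sending the $k$-th coordinate function $z_k$ to $\sm^k(a)$, and this hm\ intertwines the classical shift $h$ (which acts on coordinate functions by $z_k \mapsto z_{k+1}$) with $\sm$. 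The key compatibility is that the infinite product tracial state $\ta$, evaluated on a product $\prod_k f_k(\sm^k(a))$, equals the corresponding integral against the infinite product measure $\mu$ built from $\mu_0$; this is exactly the ``infinite tensor product tracial state obtained from $\ta_0$'' in the hypothesis, and follows from $\ta = \bigotimes_k \ta_0$ together with the definition of $\mu_0$ as the spectral measure of $a$ under $\ta_0$.

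With this dictionary in place, the next step is to transport the sets produced by Lemma~\ref{L:ShiftOnInterval} into elements $c_0,\ldots,c_n$ of $B$. Given $S \S B$ finite, $\ep > 0$, and $n \in \N$, I would first apply the lemma to get $N \in \N$ and a closed set $Y \S X = [0,1]^{\Z}$ of the special product form, with $Y, h(Y), \ldots, h^{n-1}(Y)$ disjoint and $\mu\big(X \SM \bigcup_{j=0}^{n-1} h^j(Y)\big) < \ep'$ for a suitably small $\ep'$. Choosing a \ct\ function $g_0 \colon X \to [0,1]$ supported near $Y$ with $g_0 \equiv 1$ on $Y$ (and depending only on the finitely many coordinates $1,\ldots,N$, since $Y_0$ is a cylinder set over $[0,1]^N$), I would set $c_j$ to be the image in $B$ of $g_0 \circ h^{-j} = g_0(h^{-j}(\,\cdot\,))$ under the commutative embedding, i.e. $c_j = \Phi(g_0 \circ h^{-j})$ where $\Phi$ is the hm\ above. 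Because $\Phi$ intertwines $h$ and $\sm$, we automatically get $\sm(c_j) = c_{j+1}$ \emph{exactly}, so condition~(\ref{DR:Shift}) holds with room to spare; conditions~(\ref{DR:Pos}) and~(\ref{DR:Orth}) follow because $g_0$ takes values in $[0,1]$ and the translates $g_0 \circ h^{-j}$ for $j=0,\ldots,n-1$ can be arranged to have disjoint supports (using disjointness of the $h^j(Y)$ and shrinking the support slightly). The trace estimate~(\ref{DR:Rem}) is the content of the lemma: $\ta(1-c) = \mu(\{g_0 \text{-complement}\})$ is controlled by the measure of the uncovered set, hence $< \ep$.

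The one genuinely nontrivial point — and the step I expect to be the main obstacle — is the commutation condition~(\ref{DR:Comm}), that $\|c_j a' - a' c_j\| < \ep$ for all $a' \in S$. The $c_j$ live in the ``local'' finite tensor factor indexed by coordinates $\{j+1,\ldots,j+N\}$ (after shifting), but a general $a' \in B$ does not commute with these. The standard remedy is that each $a' \in S$ can be approximated, within $\ep/2$, by an element $a'_0$ lying in a finite tensor factor $A^{\otimes F}$ for some finite $F \S \Z$; by translating $Y$ far to the right — that is, by composing everything with a high power of $\sm$, which is an automorphism and costs nothing by shift-invariance of $\ta$ — I can arrange that the coordinate window supporting the $c_j$ is disjoint from $F$, so that $c_j$ and $a'_0$ land in tensor factors over disjoint index sets and therefore commute exactly. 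A standard $\ep/2$ estimate then gives $\|[c_j, a']\| < \ep$. This ``move the Rokhlin tower away from the finite set $S$'' argument is routine once the commutative model is set up, but it is where the infinite (two-sided) nature of the tensor product is essential, and it is the reason the shift, rather than a finite-tensor-factor construction, is needed. Assembling conditions~(\ref{DR:Pos})--(\ref{DR:Rem}) then completes the verification of Definition~\ref{D:NPjTRP}.
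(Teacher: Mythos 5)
Your proposal is correct and follows essentially the same route as the paper: functional calculus on $a$ gives a shift-equivariant, trace-compatible homomorphism $C([0,1]^{\Z}) \to B$, Lemma~\ref{L:ShiftOnInterval} supplies the towers, and the commutation condition is handled by first placing $S$ (approximately) in a finite tensor factor and then translating the tower's coordinate window away from it by a power of the shift. The only bookkeeping differences are that the paper applies Lemma~\ref{L:ShiftOnInterval} with $n+1$ in place of $n$ (one needs $n+1$ disjoint translates to produce $c_0, \dots, c_n$) and constructs the open neighborhood $U \supset Y$ with disjoint translates by an explicit compactness argument rather than by ``shrinking the support slightly.''
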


\begin{proof}
Let $S \S A$ be finite, let $\ep > 0,$ and let $n \in \N.$
Following the notation of Definition~\ref{D:Shift}
and using density of the algebraic direct limit,
\wolog\  there is $M \in \N$ such that $S \S B_M = A^{\otimes 2 M}.$
Identify $C \big( [0, 1]^{\Z} \big)$
with the infinite tensor product $\bigotimes_{n \in \Z} C ([0, 1])$
of Definition~\ref{D:Shift}
by identifying
\[
C ([0, 1])^{\otimes 2 n}
=
C \big( [0, 1]^{\{ - n + 1, \, - n + 2, \, \ldots, \, n - 1, \, n \} }
          \big)
\]
in the obvious way.
Let $h \colon [0, 1]^{\Z} \to [0, 1]^{\Z}$ be the shift,
as in Lemma~\ref{L:ShiftOnInterval},
and let
$\gm \colon C \big( [0, 1]^{\Z} \big) \to C \big( [0, 1]^{\Z} \big)$
be the shift as in Definition~\ref{D:Shift},
so that $\gm (f) = f \circ h^{-1}$ for $h \in C \big( [0, 1]^{\Z} \big).$
Let $\mu$ be the infinite product measure on $[0, 1]^{\Z}$
as in Definition~\ref{D:Shift},
and let $\om$ be the corresponding tracial state on
$C \big( [0, 1]^{\Z} \big).$
Define $\ph_0 \colon C ([0, 1]) \to A$ by functional calculus:
$\ph_0 (f) = f (a)$ for $f \in C ([0, 1]).$
Then there is an induced infinite tensor product \hm\  %
$\ph \colon C \big( [0, 1]^{\Z} \big) \to B,$
which satisfies $\ph \circ \gm = \sm \circ \ph$
and $\ta \circ \ph = \om.$

Apply Lemma~\ref{L:ShiftOnInterval} with $\mu_0,$ $\mu,$
and $\ep$ as given, and with $n + 1$ in place of $n,$
obtaining  $N \in \N,$ a closed set $Y_0 \S [0, 1]^N,$
and the closed set
\[
Y = \prod_{k \leq 0} [0, 1]
        \times Y_0 \times \prod_{k \geq N + 1} [0, 1]
  \S [0, 1]^{\Z}
\]
as there.

We construct an open set $U_0 \S [0, 1]^N$ containing $Y_0$
such that the open set
\[
U = \prod_{k \leq 0} [0, 1]
        \times U_0 \times \prod_{k \geq N + 1} [0, 1]
  \S [0, 1]^{\Z}
\]
has the property that the sets
$U, \, h (U), \, \ldots, \, h^{n} (U)$ are disjoint.
To this end,
write $Y_0 = \bigcap_{m = 0}^{\infty} {\overline{V_m^{(0)}}}$
for open subsets $V_0^{(0)} \supset V_1^{(0)} \supset \cdots$
of $[0, 1]^N$ which contain $Y_0.$
Set
\[
V_m = \prod_{k \leq 0} [0, 1]
        \times V_m^{(0)} \times \prod_{k \geq N + 1} [0, 1]
  \S [0, 1]^{\Z},
\]
giving
\[
{\overline{V_m}} = \prod_{k \leq 0} [0, 1]
        \times {\overline{V_m^{(0)}}} \times \prod_{k \geq N + 1} [0, 1]
  \S [0, 1]^{\Z}.
\]
Then, for $0 \leq j < k \leq n,$
\[
\bigcap_{m = 0}^{\infty} h^j \big( {\overline{V_m}} \big)
     \cap \bigcap_{m = 0}^{\infty} h^k \big( {\overline{V_m}} \big)
  = h^j (Y) \cap h^k (Y)
  = \varnothing,
\]
so by compactness there is $m_0 (j, k)$ such that
\[
h^j \big( {\overline{V_{m_0 (j, k)}}} \big)
   \cap h^k \big( {\overline{V_{m_0 (j, k)}}} \big) = \varnothing.
\]
Set $m_0 = \max_{0 \leq j < k \leq n} m_0 (j, k),$
and take $U_0 = V_{m_0}^{(0)}.$
This completes the construction.

Choose a \cfn\  $f_0 \colon [0, 1]^N \to [0, 1]$
such that $\supp (f_0) \S U_0$ and $f_0 (x) = 1$ for all $x \in Y_0.$
Define $f \in C \big( [0, 1]^{\Z} \big)$ by
\[
f (\ldots, \, x_{-1}, \, x_0, \, x_1, \, \ldots, \, x_N, \, x_{N + 1},
        \, \ldots)
  = f_0 ( x_1, x_2, \ldots, x_N).
\]
For $j = 0, 1, \ldots, n,$
define $b_j \in C \big( [0, 1]^{\Z} \big)$ by
$b_j = \gm^{M + j} (f).$
Then set $c_j = \ph (b_j).$

We verify the conditions of Definition~\ref{D:NPjTRP}.
Condition~(\ref{DR:Pos}) is immediate.
For condition~(\ref{DR:Orth}),
observe that $b_j = f \circ h^{-M - j}$ satisfies
$\supp (b_j) \S h^{M + j} (U),$
and any $n + 1$ consecutive iterates of $U$ under $h$ are disjoint.
For condition~(\ref{DR:Shift}),
we actually have
$\| \af (c_j) - c_{j + 1} \| = 0$ for $j = 0, 1, \ldots, n.$

For condition~(\ref{DR:Comm}),
set
\[
C_0 = \bigotimes_{k = - M - N - n + 1}^M A
\andeqn
C_1 = \bigotimes_{k = M + 1}^{M + N + n} A.
\]
Thus,
using the same indexing conventions as in the identification of
$C \big( [0, 1]^{\Z} \big)$ with $\bigotimes_{n \in \Z} C ([0, 1])$
near the beginning of the proof,
$A^{\otimes 2 (M + N + n)} = C_0 \otimes C_1.$
Then
\[
S \S C_0 \otimes 1_{C_1} \S A^{\otimes 2 (M + N + n)} \S B
\]
and
\[
c_0, c_1, \ldots, c_n
    \in 1_{C_0} \otimes C_1 \S A^{\otimes 2 (M + N + n)} \S B,
\]
so in fact $c_j a = a c_j$ for $j = 0, 1, \ldots, n$ and all $a \in S.$

Finally,
for condition~(\ref{DR:Rem}),
we observe that, with $c = \sum_{j = 0}^{n} c_j,$ we have
\begin{align*}
\ta (c)
& = \sum_{j = 0}^{n} \ta (\ph (b_j))
  = \sum_{j = 0}^{n} \om (b_j)
      \\
& \geq \mu \big( Y \cup h (Y) \cup \cdots \cup h^{n - 1} (Y) \big)
  > 1 - \ep.
\end{align*}
So $\ta (1 - c) < \ep.$
\end{proof}

\begin{lem}\label{L:FCalcTNorm}
Let $f \in C ([0, 1]).$
Then for every $\ep > 0$ there is $\dt > 0$ such that
whenever $A$ is a unital \ca, $\ta \in T (A),$
and $a, b \in A$ satisfy $0 \leq a, b \leq 1$
and $\| a - b \|_{2, \ta} < \dt,$
then $\| f (a) - f (b) \|_{2, \ta} < \ep.$
\end{lem}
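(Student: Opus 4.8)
The plan is to reduce to polynomials by uniform approximation on $[0,1]$ and then exploit the multiplicative property $\| a b c \|_{2, \ta} \le \|a\| \cdot \|b\|_{2,\ta} \cdot \|c\|$ together with the elementary bound $\|x\|_{2,\ta} \le \|x\|$ (immediate from $\ta(x^* x) \le \|x^* x\| = \|x\|^2$, since $\ta$ is a state). The essential point to track throughout is that every constant produced depends only on $f$ and $\ep$, and never on $A$, on $\ta$, or on the elements $a, b$; this is exactly what makes a single $\dt$ work uniformly across all unital \ca s and all tracial states.

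First I would invoke the Weierstrass approximation theorem to choose a polynomial $p(x) = \sum_{k = 0}^{d} \ld_k x^k$ with $\| f - p \|_{\infty} < \ep / 3$ on $[0,1]$. Since $0 \le a, b \le 1$ forces $\spec(a), \spec(b) \S [0,1]$, the functional calculus gives $\| f(a) - p(a) \| < \ep/3$ and $\| f(b) - p(b) \| < \ep/3$, and hence the same bounds hold in $\| \cdot \|_{2,\ta}$.

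Next I would estimate $\| p(a) - p(b) \|_{2,\ta}$ monomial by monomial. For each $k$ I use the telescoping identity
\[
a^k - b^k = \sum_{i = 0}^{k - 1} a^i (a - b) b^{k - 1 - i},
\]
apply the multiplicative property termwise, and note $\|a^i\| \le 1$ and $\|b^{k - 1 - i}\| \le 1$ because $0 \le a, b \le 1$. This yields $\| a^k - b^k \|_{2,\ta} \le k \, \| a - b \|_{2,\ta}$, and summing gives $\| p(a) - p(b) \|_{2,\ta} \le L \, \| a - b \|_{2,\ta}$ with $L = \sum_{k = 1}^{d} k \, |\ld_k|$, a constant depending only on $p$.

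Finally, I would set $\dt = \ep / (3 L)$ (and take any $\dt > 0$ in the degenerate case $L = 0$) and combine the pieces by the triangle inequality
\[
\| f(a) - f(b) \|_{2,\ta}
 \le \| f(a) - p(a) \|_{2,\ta}
   + \| p(a) - p(b) \|_{2,\ta}
   + \| p(b) - f(b) \|_{2,\ta},
\]
obtaining the bound $< \ep/3 + L \dt + \ep/3 = \ep$ whenever $\| a - b \|_{2,\ta} < \dt$. I do not expect any serious obstacle here; the only thing requiring care is confirming that $L$ and therefore $\dt$ are genuinely independent of $A$ and of $\ta$, which the argument makes manifest since the approximation step lives entirely on $[0,1]$ and the telescoping estimate uses only $\|a\|, \|b\| \le 1$ and the single quantity $\| a - b \|_{2,\ta}$.
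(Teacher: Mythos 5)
Your proof is correct, and every constant you produce ($L = \sum_{k=1}^{d} k\, |\ld_k|$ and hence $\dt = \ep/(3L)$) visibly depends only on $f$ and $\ep$, which is the uniformity the lemma requires. The paper's proof shares your first move --- reduce to polynomials by Weierstrass approximation and a $\ep/3$ triangle inequality --- but then diverges: it reduces further to a single monomial $f(t) = t^n$ and estimates $\| f(a) - f(b) \|_{2,\ta}^2$ by writing it as a trace of a product, applying Cauchy--Schwarz once, and bounding the remaining factor in operator norm, arriving at $\dt = \ep^2$. That route yields only a H\"{o}lder-type bound of order $\| a - b \|_{2,\ta}^{1/2}$ for each monomial (and, as printed, the identity $\| a^n - b^n \|_{2,\ta}^2 = \ta\big( (a-b)(a-b)^{2n-1} \big)$ used there really pertains to $(a-b)^n$ rather than $a^n - b^n$, so it needs the telescoping identity anyway to be carried out cleanly). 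Your argument, by contrast, uses the telescoping factorization $a^k - b^k = \sum_{i=0}^{k-1} a^i (a-b) b^{k-1-i}$ together with the submultiplicativity $\| x y z \|_{2,\ta} \leq \| x \| \cdot \| y \|_{2,\ta} \cdot \| z \|$ already recorded in the introduction, and so gets a genuine Lipschitz bound $\| p(a) - p(b) \|_{2,\ta} \leq L \| a - b \|_{2,\ta}$ on the polynomial part. This is both quantitatively sharper (linear rather than square-root dependence of $\ep$ on $\dt$) and structurally simpler, since it avoids squaring the $2$-norm and invoking Cauchy--Schwarz. Both proofs are equally sufficient for the application, where only the existence of some $\dt$ matters.
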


\begin{proof}
We claim it suffices to prove this when $f$ is a polynomial.
Indeed, given $f$ arbitrary and $\ep > 0,$
choose a polynomial $g \in C ([0, 1])$
such that $\| f - g \| < \tfrac{1}{3} \ep,$
get $\dt > 0$ by applying the result for $g$
with $\tfrac{1}{3} \ep$ in place of $\ep,$
and observe that
\[
\| f (a) - f (b) \|_{2, \ta}
  \leq \| f (a) - g (a) \| + \| g (a) - g (b) \|_{2, \ta}
         + \| g (b) - f (b) \|.
\]

It is now clearly enough to prove the result for the case $f (t) = t^n.$
The case $n = 0$ is trivial.
Otherwise, take $\dt = \ep^2.$
Using selfadjointness at the first step,
using Cauchy-Schwarz at the second step,
and using $-1 \leq a - b \leq 1$ at the fifth step,
we get
\begin{align*}
\| f (a) - f (b) \|_{2, \ta}^2
  & = \ta \big( (a - b) (a - b)^{2 n - 1} \big)
    \leq \ta \big( (a - b)^{2} \big)^{1/2}
              \ta \big( (a - b)^{4 n - 2} \big)^{1/2}
       \\
  & = \| a - b \|_{2, \ta} \big\| (a - b)^{2 n - 1} \big\|_{2, \ta}
    \leq  \| a - b \|_{2, \ta} \big\| (a - b)^{2 n - 1} \big\|
       \\
  & < \dt
    = \ep^2,
\end{align*}
so $\| f (a) - f (b) \|_{2, \ta} < \ep.$
\end{proof}

The following lemma should be known,
but we have not found a reference.

\begin{lem}\label{L:NonAtom}
Let $(X, \mu)$ be a finite measure space which is nonatomic
in the sense that for every measurable set $E \S X$
with $\mu (E) > 0,$
there exists a measurable set $F \S E$
with $0 < \mu (F) < \mu (E).$
Let $E \S X$ be measurable,
and suppose $0 \leq \af \leq \mu (E).$
Then there exists a measurable set $F \subset E$ with $\mu (F) = \af.$
\end{lem}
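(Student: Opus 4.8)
This is the classical theorem (essentially a special case of Sierpi\'nski's theorem) that a finite nonatomic measure assumes every intermediate value, and the plan is to prove it directly by an exhaustion argument. The only genuine work is to upgrade the given weak nonatomicity hypothesis. So first I would show: for every measurable $E\subseteq X$ with $\mu(E)>0$ and every $\dt>0$, there is a measurable $F\subseteq E$ with $0<\mu(F)<\dt$. Starting from $E$, use the hypothesis to choose a measurable $F_1\subseteq E$ with $0<\mu(F_1)<\mu(E)$; then $\mu(E\SM F_1)=\mu(E)-\mu(F_1)>0$ as well, so the smaller of $F_1$ and $E\SM F_1$ has positive measure at most $\tfrac12\mu(E)$. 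Iterating this splitting $k$ times produces a measurable subset of positive measure at most $2^{-k}\mu(E)$, which is below $\dt$ once $k$ is large. Hence $E$ contains measurable subsets of arbitrarily small positive measure.

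Next, given $E$ and $0\le\af\le\mu(E)$, I would build an increasing sequence of measurable sets $F_0\subseteq F_1\subseteq\cdots\subseteq E$ with $\mu(F_n)\le\af$ that greedily exhausts as much measure $\le\af$ as it can. Set $F_0=\E$. Given $F_n$, put $t_n=\sup\{\mu(G):G\subseteq E\SM F_n \text{ measurable and } \mu(G)\le\af-\mu(F_n)\}$, choose an admissible $G_n\subseteq E\SM F_n$ with $\mu(G_n)>t_n-2^{-n}$, and set $F_{n+1}=F_n\cup G_n$. Each $\mu(F_n)\le\af$, so by continuity from below the set $F=\bigcup_n F_n$ satisfies $\mu(F)=\lim_n\mu(F_n)=:\bt\le\af$.

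The key step is to show $\bt=\af$. Suppose instead $\bt<\af$. Then $\mu(E\SM F)\ge\af-\bt>0$, so by the strengthened nonatomicity from the first paragraph there is a measurable $H\subseteq E\SM F$ with $0<\mu(H)<\af-\bt$. For every $n$ we have $H\subseteq E\SM F_n$ and, since $\mu(F_n)\le\bt$, also $\mu(H)<\af-\bt\le\af-\mu(F_n)$; thus $H$ is admissible at stage $n$, giving $t_n\ge\mu(H)$ and hence $\mu(G_n)>\mu(H)-2^{-n}$. But $\mu(G_n)=\mu(F_{n+1})-\mu(F_n)\to0$ because $\mu(F_n)$ converges, so letting $n\to\I$ yields $0\ge\mu(H)>0$, a contradiction. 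Therefore $\mu(F)=\af$, as required.

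I expect the main obstacle to be precisely the first paragraph: passing from the weak hypothesis (a single strict splitting) to the existence of subsets of arbitrarily small positive measure. Once that is available, the exhaustion construction and the limiting contradiction are routine. An alternative packaging applies Zorn's lemma to $\{F\subseteq E:\mu(F)\le\af\}$ ordered by inclusion and argues that a maximal element must have measure $\af$; but verifying the upper-bound condition for chains is fiddly (uncountable chains and null set-theoretic differences), so I prefer the explicit countable exhaustion above.
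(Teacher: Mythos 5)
Your proof is correct. The first step --- upgrading the single-splitting hypothesis to ``every set of positive measure contains measurable subsets of arbitrarily small positive measure'' by repeated halving --- is exactly the paper's opening move. After that the two arguments diverge in organization. The paper first proves an intermediate lemma: the set $S = \{\mu(F) : F \S E \ {\mbox{measurable}}\}$ is dense in $[0, \mu(E)]$ (shown by taking countable unions of sets of measure $< \ep$, proving the supremum of their measures is attained and equals $\mu(E)$, and reading off $\ep$-density from the partial unions); it then runs a greedy construction choosing $F_{n}$ with $\mu(F_n)$ in the prescribed interval $\big(\af - \sum_{k < n}\mu(F_k) - \tfrac{1}{n+1}, \ \af - \sum_{k < n}\mu(F_k)\big)$, which requires that density. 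You instead fold everything into a single greedy exhaustion: at each stage you take a near-supremal admissible piece, and you rule out $\bt < \af$ by the limiting contradiction $\mu(G_n) \to 0$ versus $\mu(G_n) > \mu(H) - 2^{-n}$. Your version is slightly more streamlined, since it never needs to know that the achievable measures are dense; the paper's version isolates that density statement, which is of independent use. Both are complete; the only detail worth making explicit in yours is that $\E$ is always admissible (so $t_n$ is a supremum over a nonempty set bounded by $\af$), which you implicitly use.
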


\begin{proof}
We first claim that for every measurable set $E \S X$
with $\mu (E) > 0,$
there exists a measurable set $F \S E$
with $0 < \mu (F) \leq \frac{1}{2} \mu (E).$
Indeed, choose $F_0 \S E$
with $0 < \mu (F_0) < \mu (E).$
If $\mu (F_0) \leq \frac{1}{2} \mu (E)$ take $F = F_0,$
and otherwise take $F = E \setminus F_0.$

Iterating this argument,
we find that for every measurable set $E \S X$
with $\mu (E) > 0,$
and every $\ep > 0,$
there exists a measurable set $F \S E$
with $0 < \mu (F) < \ep.$

Now let $E \S X$ satisfy $\mu (E) > 0,$
and set
\[
S = \{ \mu (F) \colon {\mbox{$F \S E$ measurable}} \}.
\]
We claim that $S$ is dense in $[0, \, \mu (E)],$
and we prove this by showing that
$S$ is $\ep$-dense in $[0, \, \mu (E)]$ for every $\ep > 0.$
So let $\ep > 0.$
Let ${\mathcal{M}}_0$ be the set of all measurable
subsets $F \S E$ such that $\mu (F) < \ep,$
and let ${\mathcal{M}}$ be the set of all countable unions
of elements in ${\mathcal{M}}_0.$
Set
\[
\bt = \sup ( \{ \mu (B) \colon B \in {\mathcal{M}} \} ).
\]

Our first step to to show that there exists $B \in {\mathcal{M}}$
such that $\mu (B) = \bt.$
Choose $B_n \in {\mathcal{M}}$ such that
$\mu (B_n) > \bt - \frac{1}{n},$
and take $B = \bigcup_{n = 1}^{\infty} B_n.$
Then $B \in {\mathcal{M}},$ so $\mu (B) \leq \bt,$
while $\mu (B) \geq \mu (B_n)  > \bt - \frac{1}{n}$ for all $n.$
So $\mu (B) = \bt.$

Next, if $\mu (B) < \mu (E),$ applying the second claim to
$E \setminus B$ gives $F \S E \setminus B$ such that
$0 < \mu (F) < \ep.$
Then $B \cup F \in {\mathcal{M}}$ and $\mu (B \cup F) > \bt.$
This contradiction shows that $\mu (B) = \bt = \mu (E).$

Accordingly,
there exist measurable sets $F_1, F_2, \ldots \in {\mathcal{M}}_0$
such that $\mu \left( \bigcup_{k = 1}^{\infty} F_k \right) = \mu (E).$
For $n \geq 0$ set $B_n = \bigcup_{k = 1}^{n} F_k.$
Then
\[
0 = \mu (B_0) \leq \mu (B_1) \leq \cdots
\andeqn \limi{n} \mu (B_n) = \mu (E),
\]
while $\mu (B_{n}) - \mu (B_{n - 1}) \leq \mu (E_n) < \ep$
for all $n.$
This implies $\ep$-density of~$S,$ and hence density.

Finally, we prove the result.
If $\af = 0$ take $F = \varnothing.$
Otherwise, apply the previous claim to $E$ to find
$F_1 \S E$ such that $\af - \frac{1}{2} < \mu (F_1) < \af,$
apply the previous claim to $E \setminus F_1$ to find
$F_2 \S E \setminus F_1$ such that
\[
\af - \mu (F_1) - \tfrac{1}{3} < \mu (F_2) < \af - \mu (F_1),
\]
apply the previous claim to $E \setminus (F_1 \cup F_2)$ to find
$F_3 \S E \setminus (F_1 \cup F_2)$ such that
\[
\af - \mu (F_1) - \mu (F_2) - \tfrac{1}{4}
 < \mu (F_3) < \af - \mu (F_1) - \mu (F_2),
\]
etc.
Then set $F = \bigcup_{n = 1}^{\infty} F_n.$
\end{proof}

\begin{lem}\label{L:NoMinPj}
Let $N$ be a von Neumann algebra
with separable predual and no minimal \pj s,
and let $\ta$ be a normal tracial state on~$N.$
Let $p \in N$ be a \pj\  with $\ta (p) > 0,$
and let $0 < \bt < \ta (p).$
Then there exists a \pj\  $q \in N$ with $q \leq p$ such that
$\ta (q) = \bt.$
\end{lem}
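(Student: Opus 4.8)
The plan is to transfer the problem to a measure space and then invoke Lemma~\ref{L:NonAtom}. The idea is that on an abelian subalgebra the trace becomes an ordinary finite measure, and the hypothesis of no minimal \pj s should force that measure to be nonatomic in exactly the sense required there.

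First I would pass to the corner $P = p N p$, which has unit $p$. A \pj\ $e \leq p$ is minimal in $P$ \ifo\ it is minimal in $N$, since every \pj\ below such an $e$ automatically lies in $P$; hence $P$ again has no minimal \pj s, it inherits a separable predual, and $\ta$ restricts to a normal tracial functional on $P$ with total mass $\ta (p)$. Next I would choose a maximal abelian $*$-subalgebra $M \subseteq P$ with $1_M = p$ and claim that $M$ has no minimal \pj s either. Indeed, if $e \in M$ were minimal in $M$, then $e M e$ would be a one-dimensional maximal abelian subalgebra of the corner $e N e = e P e$; but a von Neumann algebra possessing a one-dimensional maximal abelian subalgebra is itself one-dimensional, so this would give $e N e = \C e$, making $e$ a minimal \pj\ of $N$, contrary to hypothesis.

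With $M$ diffuse and separable, I would identify $M \cong L^{\I} (X, \nu)$ for a nonatomic standard probability space $(X, \nu)$, under which \pj s of $M$ correspond to measurable subsets of $X$ modulo null sets and the order $\leq$ corresponds to inclusion. Because $\ta$ is normal, the set function $\mu_0 (F) = \ta (1_F)$ is countably additive, hence a finite measure on $X$ with $\mu_0 (X) = \ta (p)$. A short computation then shows $\mu_0$ is nonatomic in the sense of Lemma~\ref{L:NonAtom}: given $F$ with $\mu_0 (F) > 0$, the density of $\mu_0$ with respect to $\nu$ is strictly positive on a subset $F'$ of positive $\nu$-measure, and since $\nu$ is nonatomic one can split $F'$ into two pieces of positive $\nu$-measure, each then of positive $\mu_0$-measure, yielding a subset of $F$ of strictly intermediate $\mu_0$-measure. (Alternatively one first replaces $N$ by the corner $s N s$ for the central support $s$ of $\ta$, on which $\ta$ is faithful, and reads off nonatomicity of $\mu_0$ directly from the diffuseness of $M$.)

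Finally I would apply Lemma~\ref{L:NonAtom} to $(X, \mu_0)$ with $E = X$ and target value $\bt$, which is legitimate since $0 < \bt < \ta (p) = \mu_0 (X)$; this produces a measurable $F \subseteq X$ with $\mu_0 (F) = \bt$. The associated \pj\ $q = 1_F \in M \subseteq P$ then satisfies $q \leq p$ and $\ta (q) = \mu_0 (F) = \bt$, as required. The one genuinely nontrivial step is the middle one, namely converting the operator-algebraic hypothesis of no minimal \pj s into honest nonatomicity of a measure; the bridge is the standard fact that compressing a maximal abelian subalgebra by one of its \pj s again yields a maximal abelian subalgebra in the corner, which lets minimality propagate back to $N$. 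Everything after that is routine measure theory already packaged in Lemma~\ref{L:NonAtom}.
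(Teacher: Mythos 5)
Your argument is correct, and it takes a genuinely different route from the paper's. The paper proves the lemma by type decomposition: it discards types ${\mathrm{I}}_{\infty},$ ${\mathrm{II}}_{\infty},$ and ${\mathrm{III}}$ as having no normal tracial states, handles type ${\mathrm{I}}_n$ by writing $N = L^{\I} (X, \mu, M_n)$ and invoking Lemma~\ref{L:NonAtom} for a suitably rescaled measure, and handles type ${\mathrm{II}}_1$ with the center valued trace: Theorem~8.4.4 of~\cite{KR} produces a \pj\ $q_0$ with $T (q_0) = (\bt / \ta (p)) T (p),$ and the comparison theorem then replaces $q_0$ by an equivalent \pj\ under~$p.$ You instead give a single uniform argument: compress to $p N p,$ pick a maximal abelian subalgebra $M,$ use the standard fact that $e M e$ is again maximal abelian in $e P e$ to show $M$ is diffuse, identify $M$ with $L^{\I}$ of a nonatomic standard measure space, and push $\ta$ down to a nonatomic measure so that Lemma~\ref{L:NonAtom} applies directly. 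Your route avoids both the type decomposition and the center valued trace, at the cost of invoking the structure theory of abelian von Neumann algebras with separable predual together with the MASA compression fact; it also produces $q$ honestly below $p$ inside an abelian subalgebra rather than only up to Murray--von Neumann equivalence. The individual steps all check out: the corner $p N p$ inherits the absence of minimal \pj s, normality of $\ta$ gives countable additivity of $\mu_0,$ and your density argument correctly converts nonatomicity of $\nu$ into nonatomicity of $\mu_0$ even where $\ta$ fails to be faithful.
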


\begin{proof}
Suppose that $(N_i)_{i \in I}$ is a family of von Neumann algebras
and the result holds for $N_i$ for each $i \in I.$
We claim that the result then holds for
the von Neumann algebra direct sum $\bigoplus_{i \in I} N_i.$
This is easily
seen by normalizing the restriction of the tracial state to $N_i$
for each $i \in I.$

By type decomposition,
it therefore suffices to prove this separately for von Neumann algebras
of type ${\mathrm{I}}_n$ for fixed $n \in \N \cup \{ \I \},$
for type ${\mathrm{II}}_1,$ for type ${\mathrm{II}}_{\infty},$
and for type~III.
We can ignore types ${\mathrm{I}}_{\infty},$ ${\mathrm{II}}_{\infty},$ and~III,
because they have no normal tracial states.

We consider the case type ${\mathrm{I}}_n$ for fixed $n \in \N.$
Using a direct sum decomposition as at the beginning of the proof,
we may reduce to the case that the \pj~$p$ has constant rank,
say~$m.$
Thus $\ta (p) = m / n.$
We can also assume that there is a finite measure space $(X, \mu)$
such that $N = L^{\I} (X, \mu, M_n).$
Then there is a nonnegative $f \in L^1 (X, \mu)$ such that,
with ${\mathrm{Tr}}$ denoting the standard
(unnormalized) trace on~$M_n,$
we have
\[
\ta (a) = \frac{1}{n} \int_X {\mathrm{Tr}} (a (x)) f (x) \, d \mu (x)
\]
for all $a \in N.$

Define $\nu = \frac{1}{n} f \cdot \mu,$
so that
$\ta (a) = \int_X {\mathrm{Tr}} (a (x)) \, d \nu (x)$
for all $a \in N.$
The assumption that $N$ has no minimal projections
ensures that $\mu$ is nonatomic in the sense of Lemma~\ref{L:NonAtom}.
We claim that the same is true of~$\nu.$
So assume $E \S X$ is measurable and $\nu (E) \neq 0.$
Then there is $\ep > 0$ such that the set
$Y = \{ x \in E \colon f (x) > \ep \}$
satisfies $\mu (Y) > 0.$
Any set $F \S Y$ such that $0 < \mu (F) < \mu (Y)$
then satisfies
$0 < \nu (F) < \nu (Y) \leq \nu (E).$
The claim follows.

As above, our choices imply that $\ta (p) = \frac{m}{n}.$
Also $\nu (X) = 1$ and $n \bt / m = \bt / \ta (p) \leq 1.$
Use Lemma~\ref{L:NonAtom} to choose
a measurable subset $F \S X$ such that $\nu (F) = n \bt / m.$
Then the \pj\  %
\[
q (x) = \begin{cases}
   p (x) & x \in F
       \\
   0     & x \in X \SM F
\end{cases}
\]
satisfies the conclusion.

It remains to consider the type ${\mathrm{II}}_1$ case.
Let $Z = N \cap N'$ be the center of~$N,$
and let $T \colon N \to Z$ be the center valued trace
(Theorem~8.2.8 of~\cite{KR}).
Proposition~8.3.10 of~\cite{KR} implies that $\ta \circ T = \ta.$
Let $a = (\bt / \ta (p) ) T (p) \in Z.$
By Theorem~8.4.4 of~\cite{KR}
(referring to the proof of Theorem~8.4.3 of~\cite{KR}
for the definition of~$\Dt$),
there exists a \pj\  $q_0 \in N$ such that $T (q_0) = a.$
Theorem 8.4.3(vi) of~\cite{KR} implies that $q_0$
is \mvnt\  to a \pj\  $q \leq p.$
Clearly $\ta (q) = \ta (q_0) = \bt.$
\end{proof}

\begin{lem}\label{L:FineRR0}
Let $N$ be a von Neumann algebra
with separable predual and no minimal \pj s,
and let $\ta$ be a normal tracial state on~$N.$
Let $a \in N$ satisfy $0 \leq a \leq 1,$
and let $\ep > 0.$
Then there exist $m \in \N,$
\pj s $p_1, p_2, \ldots, p_m \in N$
such that $\sum_{j = 1}^m p_j = 1$ and $\ta (p_j) < \ep$
for $j = 1, 2, \ldots, m,$
and distinct $\ld_1, \ld_2, \ldots, \ld_m \in [0, 1],$
such that the element $b = \sum_{j = 1}^m \ld_j p_j$
satisfies $\| b - a \| < \ep.$
\end{lem}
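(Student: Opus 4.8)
The plan is to produce $b$ as a self-adjoint element with finite spectrum built from spectral projections of $a$, and then to refine those projections so that each has trace less than $\ep$ while arranging the eigenvalues to be distinct. The one real ingredient beyond routine spectral theory is the splitting of projections into pieces of prescribed small trace, which is precisely what Lemma~\ref{L:NoMinPj} supplies (and is where the hypothesis of no minimal \pj s enters).

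First I would fix a partition of $[0, 1]$ into finitely many pairwise disjoint half-open intervals $I_1, I_2, \ldots, I_k,$ each of length less than $\ep,$ and set $q_i = \chi_{I_i} (a),$ the spectral projection of $a$ for $I_i$ via Borel functional calculus in~$N.$ Then the $q_i$ are mutually orthogonal \pj s with $\sum_{i} q_i = 1,$ each $q_i$ commutes with $a,$ and, inside the corner $q_i N q_i$ (with unit $q_i$), the element $q_i a q_i = a q_i$ has spectrum contained in $\overline{I_i}.$ Next I would refine each $q_i.$ Discard the indices with $q_i = 0.$ For each remaining $i$ with $\ta (q_i) > 0,$ choose $n_i \in \N$ with $\ta (q_i) / n_i < \ep$ and apply Lemma~\ref{L:NoMinPj} repeatedly: peel off a subprojection of $q_i$ of trace $\ta (q_i) / n_i,$ then peel off another from the complementary subprojection of $q_i,$ and so on, obtaining mutually orthogonal \pj s $q_{i, 1}, \ldots, q_{i, n_i}$ with $\sum_{l} q_{i, l} = q_i$ and $\ta (q_{i, l}) = \ta (q_i) / n_i < \ep.$ For an index $i$ with $q_i \neq 0$ but $\ta (q_i) = 0$ (possible when $\ta$ is not faithful) I would simply keep $q_{i, 1} = q_i,$ which already has trace $0 < \ep.$

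To get distinct eigenvalues I would choose points $\ld_{i, l}$ in the interior of $I_i,$ pairwise distinct for each fixed $i;$ since the $I_i$ are disjoint their interiors are disjoint, so the whole family $(\ld_{i, l})$ consists of distinct numbers lying in the respective $\overline{I_i} \S [0, 1].$ Relabelling the $q_{i, l}$ as $p_1, \ldots, p_m$ and the $\ld_{i, l}$ as $\ld_1, \ldots, \ld_m$ gives $\sum_{j} p_j = 1,$ each $\ta (p_j) < \ep,$ and the $\ld_j$ distinct; since $1 \neq 0$ at least one $q_i \neq 0,$ so $m \geq 1.$ Finally, for the norm estimate, set $b = \sum_{j} \ld_j p_j = \sum_{i} b_i$ with $b_i = \sum_{l} \ld_{i, l} q_{i, l} \in q_i N q_i.$ Because the $q_i$ are orthogonal and commute with $a,$ both $a = \sum_{i} q_i a q_i$ and $b = \sum_{i} b_i$ are block diagonal for the decomposition $1 = \sum_{i} q_i,$ so that $\| b - a \| = \max_{i} \| b_i - q_i a q_i \|.$ Within $q_i N q_i$ both $b_i$ and $q_i a q_i$ are self-adjoint with spectrum in $\overline{I_i};$ subtracting $c_i q_i,$ where $c_i$ is the midpoint of $I_i,$ bounds each by half the length of $I_i,$ whence $\| b_i - q_i a q_i \| \leq \mathrm{length}(I_i) < \ep$ and therefore $\| b - a \| < \ep.$

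The part needing the most care is not any single estimate but the coordination of two competing demands: the pieces $p_j$ must have small trace, yet the eigenvalues $\ld_j$ must be distinct. The trace condition is handled entirely by the subdivision via Lemma~\ref{L:NoMinPj}, and distinctness is then free because each $I_i$ is a nondegenerate interval with room to spread the finitely many values $\ld_{i, l}$ apart. The only further subtlety is the bookkeeping for nonzero \pj s of trace zero, which are carried along whole rather than split.
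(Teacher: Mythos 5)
Your proof is correct and follows essentially the same route as the paper's: approximate $a$ by an element with finite spectrum, use Lemma~\ref{L:NoMinPj} to subdivide the resulting projections into pieces of trace less than $\ep,$ and then make the eigenvalues distinct by a small perturbation. The only difference is cosmetic: the paper invokes real rank zero of $N$ as a black box to produce the finite-spectrum approximant, whereas you carry out that step explicitly with the spectral projections $\chi_{I_i}(a),$ which also lets you get distinctness for free by placing the eigenvalues in the disjoint interiors of the~$I_i.$
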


\begin{proof}
It follows from Lemma~\ref{L:NoMinPj}
that every \pj\  $q$ can be written as
$q = \sum_{k = 1}^l q_j$ for \pj s $q_j$ with $\ta (q_j) < \ep.$
Since $N$ has real rank zero, we may therefore find
$c \in N$ of the form
$c = \sum_{j = 1}^m \mu_j p_j$ for \pj s $p_1, p_2, \ldots, p_m \in N$
with $\sum_{j = 1}^m p_j = 1$ and $\ta (p_j) < \ep$ for all $j,$
and with $\mu_1, \mu_2, \ldots, \mu_m \in [0, 1],$
such that $\| c - a \| < \frac{1}{2} \ep.$
The only defect is that the $\mu_j$ are not necessarily distinct.
Choose distinct $\ld_1, \ld_2, \ldots, \ld_m \in [0, 1]$
such that $| \ld_j - \mu_j | < \frac{1}{2} \ep$ for all $j,$
and set $b = \sum_{j = 1}^m \ld_j p_j.$
\end{proof}

\begin{prp}\label{P:NoAtomD}
Let $A$ be a separable unital \ca.
Let $\ta$ be a tracial state on $A$
such that, with $\pi_{\ta}$
being the associated Gelfand-Naimark-Segal representation,
the von Neumann algebra $\pi_{\ta} (A)''$ has no minimal \pj s.
Let $S = \{ a \in A \colon 0 \leq a \leq 1 \}.$
For $a \in S$ let $\mu_a$ be the Borel probability measure on $[0, 1]$
defined by $\int_0^1 f \, d \mu_a = \ta (f (a))$
for $f \in C ([0, 1]).$
Then there exists a dense $G_{\dt}$-set $G \S S$
such that, for every $a \in G$ and every $t \in [0, 1],$
we have $\mu_a (\{ t \}) = 0.$
\end{prp}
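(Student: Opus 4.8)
The plan is to realize $G$ as a countable intersection of open dense subsets of $S$ and then invoke the Baire category theorem. Observe first that $S$ is a norm-closed subset of $A$, hence a complete metric space, so Baire applies. For a Borel probability measure $\nu$ on $[0,1]$ write $\Delta(\nu) = \sup_{t \in [0,1]} \nu(\{t\})$ for the mass of its largest atom, and for $n \in \N$ set $O_n = \{a \in S : \Delta(\mu_a) < 1/n\}$. Then $G = \bigcap_n O_n$, so it suffices to prove that each $O_n$ is open and dense.

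The key analytic point is that $a \mapsto \Delta(\mu_a)$ is upper semicontinuous on $S$. I would deduce this from weak convergence of measures: if $a_k \to a$ in norm then $f(a_k) \to f(a)$ in norm for every $f \in C([0,1])$, so $\ta(f(a_k)) \to \ta(f(a))$ and hence $\mu_{a_k} \to \mu_a$ weakly. It then remains to check that $\Delta$ is upper semicontinuous for weak convergence, which is a standard compactness argument: if $\mu_{a_k}$ has an atom of mass $\geq c$ at a point $t_k$, pass to a subsequence with $t_k \to t$ and test against continuous bumps concentrated near $t$ to conclude that $\mu_a([t-\dt, t+\dt]) \geq c$ for every $\dt > 0$, whence $\mu_a(\{t\}) \geq c$. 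Upper semicontinuity makes each sublevel set $O_n$ open and exhibits $G = \{a : \Delta(\mu_a) = 0\}$ as a $G_\dt$.

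For density of $O_n$, fix $a \in S$ and $\ep > 0$; I must produce $a' \in S$ with $\|a' - a\| < \ep$ and $\Delta(\mu_{a'}) < 1/n$. Work in $M = \pi_\ta(A)''$, a von Neumann algebra with separable predual (as $A$ is separable) and no minimal projections, carrying the normal extension of $\ta$. Apply Lemma~\ref{L:FineRR0} to $\pi_\ta(a)$ with tolerance $\min(\ep, 1/n)$ to obtain $b = \sum_j \ld_j p_j$ with the $p_j$ orthogonal projections summing to $1$, the $\ld_j \in [0,1]$ distinct, $\ta(p_j) < 1/n$, and $\|b - \pi_\ta(a)\| < \ep$. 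Because the $\ld_j$ are distinct, $\mu_b = \sum_j \ta(p_j)\, \dt_{\ld_j}$ has every atom of mass $< 1/n$, so $\Delta(\mu_b) < 1/n$; and $0 \leq b \leq 1$.

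The remaining, and main, difficulty is that $b$ lies in $M$ but not in $A$, so it must be transferred back into $A$ without reintroducing large atoms — and here the wrong-way direction of semicontinuity (atoms can only grow under limits) is precisely what I must exploit rather than fight. By Kaplansky density choose self-adjoint $h_k \in A$ with $\|h_k\| \leq \ep$ and $\pi_\ta(h_k) \to b - \pi_\ta(a)$ strong-$*$ (clipping the $h_k$ to $[-\ep, \ep]$ to enforce the norm bound even when $\ta$ is not faithful). Let $g \colon \R \to [0,1]$ be the continuous clip to $[0,1]$ and set $a_k' = g(a + h_k) \in S$. Since $g$ is $1$-Lipschitz and $g(a) = a$, we have $\|a_k' - a\| \leq \|h_k\| \leq \ep$; and since functional calculus by a fixed continuous function is strong-$*$ continuous on the unit ball, $\pi_\ta(a_k') = g(\pi_\ta(a) + \pi_\ta(h_k)) \to g(b) = b$ strong-$*$, so $\mu_{a_k'} \to \mu_b$ weakly. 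Upper semicontinuity of $\Delta$ then forces $\limsup_k \Delta(\mu_{a_k'}) \leq \Delta(\mu_b) < 1/n$, so $a' = a_k'$ works for all large $k$. This proves each $O_n$ open and dense, and Baire finishes the proof.
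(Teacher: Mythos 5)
Your overall architecture is sound and genuinely different from the paper's. The paper does not introduce the ``largest atom'' functional $\Delta$; instead it fixes, for each order~$n$, a family of partitions $P$ of $[0,1]$ with associated tent functions $F_P$, takes $U_P = \{a : \ta(f(a)) < 7/n \mbox{ for } f \in F_P\}$ and $V_n = \bigcup_{\ord(P)=n} U_P$, and shows $\bigcap_n V_n$ works; openness is then free (the $U_P$ are manifestly open), and density requires a quantitative transfer lemma (Lemma~\ref{L:FCalcTNorm}) to control $\ta(f(b))$ for the finitely many test functions $f \in F_P$ after pulling the step operator $c = \sum_j \ld_j p_j$ back into $A$ via Kaplansky density. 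Your route replaces all of that with weak convergence of the spectral measures and upper semicontinuity of $\Delta$, which is correct (the bump-function compactness argument is fine) and lets you dispense with Lemma~\ref{L:FCalcTNorm} entirely: you only need the approximants to converge to $b$ strong-$*$, not to satisfy any quantitative $\|\cdot\|_{2,\ta}$ estimate. Both proofs lean on the same core ingredient, Lemma~\ref{L:FineRR0} applied in $\pi_\ta(A)''$, to produce the atomless-up-to-$1/n$ model element. Your version is arguably cleaner.

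There is one step that fails as written: the claim that $\|g(a+h_k) - g(a)\| \leq \|h_k\|$ because the clip $g(t) = \min(\max(t,0),1)$ is $1$-Lipschitz. Scalar Lipschitz functions are not operator Lipschitz; $t \mapsto t_+$ (and hence $g$, which has the same kink) admits self-adjoint $x,y$ with $\|g(x)-g(y)\| > \|x-y\|$, with the best constant growing logarithmically in dimension. Fortunately you do not need the Lipschitz estimate: since $\sigma(a+h_k) \subseteq [-\ep, 1+\ep]$ and $|g(t)-t| \leq \ep$ there, the spectral mapping theorem gives $\|g(a+h_k) - (a+h_k)\| \leq \ep$, hence $\|a_k' - a\| \leq 2\ep$, which serves just as well. (The paper sidesteps this entirely by first reducing to $\ep_0 \leq a \leq 1 - \ep_0$ and perturbing by less than $\ep_0$, so no clipping is needed.) With that repair your proof is complete.
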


\begin{proof}
Consider a partition $P = \{ t_0, t_1, \ldots, t_n \}$ of $[0, 1]$
with $0 = t_0 < t_1 < \cdots < t_n = 1.$
We write $\ord (P) = n,$ and call it the order of $P.$
For $k = 1, 2, \ldots, n,$
let $f_k \colon [0, 1] \to [0, 1]$
be the \cfn\  which is is linear on each interval $[t_{j - 1}, \, t_j],$
equal to~$1$ on $[t_{k - 1}, \, t_k],$
and equal to~$0$ on $[0, \, t_{k - 2}]$ and $[t_{k + 1}, \, 1]$
(when these intervals are not empty).
Then set $F_P = \{ f_1, f_2, \ldots, f_n \} \S C ([0, 1]).$
Further let $U_P \S S$ be the open set given by
\[
U_P = \big\{ a \in S \colon
        {\mbox{$\ta (f (a)) < 7 n^{-1}$ for $f \in F_P$}} \}.
\]

For $n \in \N,$ define
\[
V_n = \bigcup_{\ord (P) = n} U_P,
\]
which is an open subset of~$S.$
Let $a \in \bigcap_{n = 1}^{\infty} V_n.$
We claim that $\mu_a ( \{ t \} ) = 0$ for all $t \in [0, 1].$

To prove the claim, let $t \in [0, 1]$ and let $\ep > 0.$
Choose $n \in \N$ such that $7 n^{-1} < \ep.$
Choose a partition $P$ of order $n$ such that $a \in U_P.$
Then there is $f \in F_P$ such that $f (t) = 1.$
So
\[
\mu_a ( \{ t \} )
   \leq \int_0^1 f \, d \mu
   = \ta (f (a))
   < 7 n^{-1}
   < \ep.
\]
Since $\ep > 0$ is arbitrary, this proves the claim.

We now claim that $V_n$ is dense.
Let $a \in S,$ let $n \in \N,$ and let $\ep > 0.$
We must find a partition $P$ of order $n$
and $b \in U_P$ such that $\| a - b \| < \ep.$
It suffices to consider elements $a$ such that there is $\ep_0 > 0$
with $\ep_0 \leq a \leq 1 - \ep_0,$
and we may further reduce the size of $\ep$ and assume $\ep < \ep_0.$
Let $(H_{\ta}, \pi_{\ta}, \xi_{\ta})$
be the Gelfand-Naimark-Segal representation
associated with $\ta.$
We also write $\ta$ for the tracial state on $\pi_{\ta} (A)''.$

Set $\dt = \min \big( \frac{1}{2} \ep, \frac{1}{n} \big).$

Apply Lemma~\ref{L:FineRR0} in $\pi_{\ta} (A)''$ with $a$ as given
and with $\dt$ in place of $\ep,$
finding $c = \sum_{j = 1}^m \ld_j p_j \in \pi_{\ta} (A)''.$
We have $\ep_0 - \dt \leq c \leq 1 - (\ep_0 - \dt).$
\Wolog\  $0 < \ld_1 < \ld_2 < \cdots < \ld_m < 1.$
For $l = 1, 2, \ldots, n - 1,$
choose $k (l)$ such that
\[
\sum_{j = 1}^{k (l)} \ta (p_j)
 \leq \frac{l}{n}
 < \sum_{j = 1}^{k (l) + 1} \ta (p_j).
\]
Then choose $t_l$ such that $\ld_{k (l)} < t_l < \ld_{k (l) + 1}.$
Set $k (0) = 0,$ $k (n) = m,$ $t_0 = 0,$ and $t_n = 1.$
Then $P = \{ t_0, t_1, \ldots, t_n \}$ is a partition of $[0, 1]$
of order~$n.$
We have
\begin{align*}
0 & = t_0 < \ld_{k (0) + 1}
    < \cdots < \ld_{k (1)} < t_1 < \ld_{k (1) + 1}
    < \cdots < \ld_{k (2)} < t_2 < \ld_{k (2) + 1}
          \\
  & < \cdots < \ld_{k (n - 1)} < t_{n - 1} < \ld_{k (n - 1) + 1}
    < \cdots < \ld_{k (n)} < t_n = 1.
\end{align*}
Moreover,
since $\ta (p_{k (l - 1) + 1}) < \dt,$
we have
\[
\sum_{j = 1}^{k (l - 1)} \ta (p_j) > \frac{l}{n} - \dt,
\]
whence by subtraction
\[
\sum_{j = k (l - 1) + 1}^{k (l)} \ta (p_j)
 < \frac{1}{n} + \dt
 \leq \frac{2}{n}.
\]
Since each $f \in F_P$ has support contained in the union
of three of the intervals $[t_{l - 1}, \, t_l],$
it follows that $\ta (f (c)) < \frac{6}{n}$
for all $f \in F_P.$

Use Lemma~\ref{L:FCalcTNorm} to choose $\rh > 0$
such that whenever $B$ is a unital \ca\  %
and $a, b \in A$ satisfy $0 \leq a, b \leq 1$
and $\| a - b \|_{2, \ta} < \rh,$
then $\| f (a) - f (b) \|_{2, \ta} < \frac{1}{n}$
for every $f \in F_P.$
Set
\[
T = \big\{ x \in A_{\sa} \colon
             \| x - a \| \leq \tfrac{1}{2} \ep \big\}.
\]
According to the Kaplansky Density Theorem,
$\pi_{\ta} (T)$ is strong operator dense in
\[
\big\{ y \in \pi_{\ta} (A)'' \colon
    \| y - \pi_{\ta} (a) \| \leq \tfrac{1}{2} \ep \big\}.
\]
In particular, there is $b \in T$ such that
$\| \pi_{\ta} (b) \xi_{\ta} - c \xi_{\ta} \| < \rh.$
Note that, for $y \in \pi_{\ta} (A)'',$
\[
\| y \xi_{\ta} \|^2
 = \langle y \xi_{\ta}, y \xi_{\ta} \rangle
 = \langle y^* y \xi_{\ta}, \xi_{\ta} \rangle
 = \ta (y^* y)
 = \| y \|_{2, \ta}^2.
\]
Thus
$\| \pi_{\ta} (b) - c \|_{2, \ta} < \rh.$
So,
using the choice of $\rh$
and $| \ta (a) | \leq \| a \|_{2, \ta},$
we have
$\big| \ta \big( \pi_{\ta} (f (b)) - f (c) \big) \big| < \frac{1}{n}$
for $f \in F_P.$
It follows that $\ta (f (b)) < \frac{7}{n}$
for $f \in F_P.$
Thus $b \in U_P.$
Since $\| b - a \| < \ep,$ the proof is complete.
\end{proof}

\begin{cor}\label{C:TRPForShift}
Let $A$ be a separable unital \ca.
Let $\ta_0$ be a tracial state on $A$
such that, with $\pi_{\ta_0}$
being the associated Gelfand-Naimark-Segal representation,
the von Neumann algebra $\pi_{\ta_0} (A)''$ has no minimal \pj s.
Let $\sm$ be either the minimal shift or the maximal shift
on $B = \bigotimes_{n \in \Z} A$
(minimal or maximal tensor product, as appropriate),
as in Definition~\ref{D:Shift}.
Then $\sm$ has the \nptrpw\  the infinite tensor product
tracial state $\ta$ on $B$ obtained from $\ta_0.$
\end{cor}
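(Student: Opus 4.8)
The plan is to obtain this as an immediate consequence of the two preceding propositions: Proposition~\ref{P:NoAtomD} produces an element whose spectral distribution is nonatomic, and Proposition~\ref{P:TRPWithNoAtom} converts the existence of such an element into the desired property of the shift. The hypothesis that $\pi_{\ta_0} (A)''$ has no minimal \pj s is exactly the condition Proposition~\ref{P:NoAtomD} places on the tracial state, so no extra hypotheses need to be arranged. The whole corollary should therefore reduce to producing one good element and feeding it into the earlier result.

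First I would apply Proposition~\ref{P:NoAtomD} to $A$ with its tracial state $\ta_0$ (taking the $\ta$ there to be $\ta_0$). This yields a dense $G_{\dt}$-set $G \S S$, where $S = \{ a \in A \colon 0 \leq a \leq 1 \}$, such that $\mu_a (\{ t \}) = 0$ for every $a \in G$ and every $t \in [0, 1]$, where $\mu_a$ is the spectral measure determined by $\int_0^1 f \, d \mu_a = \ta_0 (f (a))$.

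Next I would note that $S$ is a nonempty closed subset of $A_{\sa}$: it contains $0$, and it is the intersection of the closed sets $\{ a = a^* \}$, $\{ a \geq 0 \}$, and $\{ \| a \| \leq 1 \}$. Hence $S$ is a nonempty complete metric space in the norm, and by the Baire category theorem the dense $G_{\dt}$-subset $G$ is nonempty. I may therefore choose some $a \in G$, and by construction its spectral measure $\mu_a$ (which is exactly the $\mu_0$ of Proposition~\ref{P:TRPWithNoAtom}) has $\mu_a (\{ t \}) = 0$ for all $t \in [0, 1]$.

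Finally I would invoke Proposition~\ref{P:TRPWithNoAtom} with this element $a$ and the tracial state $\ta_0$. Its hypotheses are now satisfied, and its conclusion states precisely that $\sm$ has the \nptrpw\ the infinite tensor product tracial state $\ta$ on $B$ obtained from $\ta_0$, as required. There is no genuine obstacle in this argument; the only point worth a word is the nonemptiness of the dense $G_{\dt}$-set, which follows from Baire category once one observes that $S$ is complete. All the substantive work has already been carried out in the two earlier propositions, namely the measure-theoretic genericity argument of Proposition~\ref{P:NoAtomD} and the combinatorial Rokhlin-tower construction underlying Proposition~\ref{P:TRPWithNoAtom}.
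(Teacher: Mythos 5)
Your argument is correct and is exactly the paper's proof: apply Proposition~\ref{P:NoAtomD} to produce an element whose spectral measure with respect to $\ta_0$ is nonatomic, then feed it into Proposition~\ref{P:TRPWithNoAtom}. The extra remark that $S$ is complete, so the dense $G_{\dt}$-set is nonempty by Baire category, is a fair point the paper leaves implicit.
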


\begin{proof}
Using Proposition~\ref{P:NoAtomD},
we see that the hypotheses of
Proposition~\ref{P:TRPWithNoAtom} are satisfied.
\end{proof}

\section{$Z$-stable C*-algebras with tracial rank zero}\label{Sec:Main}

\indent
We begin by defining a useful metric on the automorphisms
of a separable \ca.

\begin{ntn}\label{N-2910ZMetric}
Let $A$ be a separable \ca.
Let $\Aut (A)$ be the set of all automorphisms of $A.$
For any enumeration $S = (a_1, a_2, \ldots )$ of a countable dense
subset of~$A,$ we define metrics on $\Aut (A)$ by
\[
\rh_S^{(0)} (\af, \bt)
 = \sum_{k = 1}^{\I} 2^{- k}  \| \af (a_k) - \bt (a_k) \|
\andeqn
\rh_S (\af, \bt)
 = \rh_S^{(0)} (\af, \bt) + \rh_S^{(0)} (\af^{-1}, \bt^{-1}).
\]
\end{ntn}

The following result is well known.
We have been unable to find a reference,
so we sketch the proof.

\begin{lem}\label{L:AutTop}
For any $S$ as in Notation~\ref{N-2910ZMetric},
the metrics $\rh_S^{(0)}$ and $\rh_S$
define the topology of pointwise norm convergence on $\Aut (A),$
that is, the topology in which a net $(\af_i)_{i \in I}$ converges
to $\af$ \ifo\  $\| \af_i (a) - \af (a) \| \to 0$ for all $a \in A.$
Moreover, for every such $S,$ the metric $\rh_S$ is complete.
\end{lem}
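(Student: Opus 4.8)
The plan is to treat the metric axioms and finiteness as routine, then pin down the topology, and finally prove completeness of $\rh_S,$ which is where the real work lies. Since every automorphism of a \ca\ is isometric, we have $\| \af (a_k) - \bt (a_k) \| \leq 2 \| a_k \|$ for all $\af, \bt \in \Aut (A),$ so both series converge and $\rh_S^{(0)}, \rh_S$ are finite-valued, at least once the enumeration is taken to satisfy $\sum_{k = 1}^{\I} 2^{-k} \| a_k \| < \I$ (this is the one point where a mild condition on the enumeration, or a damping of the norm, is needed). Symmetry and the triangle inequality are inherited term by term from the norm, and $\rh_S^{(0)} (\af, \bt) = 0$ forces $\af (a_k) = \bt (a_k)$ for all $k,$ hence $\af = \bt$ by density of $S$ and continuity; so both are metrics.

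For the topology, I would show $\rh_S^{(0)} (\af_i, \af) \to 0$ \ifo\ $\| \af_i (a) - \af (a) \| \to 0$ for all $a \in A.$ One direction follows from $2^{-k} \| \af_i (a_k) - \af (a_k) \| \leq \rh_S^{(0)} (\af_i, \af),$ which gives convergence on each $a_k,$ upgraded to all $a \in A$ by approximating $a$ by some $a_k$ and using that the $\af_i$ are isometric (uniform equicontinuity). For the converse I would split the sum at an index $K$ chosen, using $\sum_k 2^{-k} \| a_k \| < \I,$ so that the tail is below $\tfrac{1}{2} \ep$ uniformly in $i,$ while the finitely many head terms are eventually below $\tfrac{1}{2} \ep$ by pointwise convergence; this argument is valid for nets. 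To see that $\rh_S$ defines the \emph{same} topology, note $\rh_S \geq \rh_S^{(0)},$ and conversely that pointwise convergence $\af_i \to \af$ automatically forces $\af_i^{-1} \to \af^{-1}$ pointwise: for $b \in A$ and $a = \af^{-1} (b),$ isometry of $\af_i$ gives $\| \af_i^{-1} (b) - a \| = \| b - \af_i (a) \| \to \| b - \af (a) \| = 0.$ Hence $\rh_S^{(0)} (\af_i^{-1}, \af^{-1}) \to 0$ as well, and the two metrics are topologically equivalent, both inducing pointwise norm convergence.

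The main obstacle is completeness of $\rh_S.$ Given a $\rh_S$-Cauchy sequence $(\af_i),$ both $(\af_i)$ and $(\af_i^{-1})$ are $\rh_S^{(0)}$-Cauchy, so $(\af_i (a_k))_i$ and $(\af_i^{-1} (a_k))_i$ are Cauchy in the complete space $A$ for each $k$; by density and the isometry of the $\af_i$ this yields pointwise limits $\af (a) = \lim_i \af_i (a)$ and $\bt (a) = \lim_i \af_i^{-1} (a)$ for all $a \in A.$ Joint continuity of the algebraic operations on norm-bounded sets makes $\af$ and $\bt$ isometric $*$-\hm s of $A,$ hence injective. The crux is surjectivity, which I would obtain by showing $\af \circ \bt = \bt \circ \af = \id$ via the identity $\| \af_i (\bt (a)) - a \| = \| \bt (a) - \af_i^{-1} (a) \| \to 0$ (and its mirror image $\| \af_i^{-1} (\af (a)) - a \| = \| \af (a) - \af_i (a) \| \to 0$), so that $\af (\bt (a)) = \lim_i \af_i (\bt (a)) = a.$ Thus $\af \in \Aut (A)$ with $\af^{-1} = \bt,$ and $\af_i \to \af,$ $\af_i^{-1} \to \bt = \af^{-1}$ pointwise, whence $\rh_S (\af_i, \af) \to 0.$ It is precisely this surjectivity step that forces us to retain the inverse term in $\rh_S$: a $\rh_S^{(0)}$-Cauchy sequence need only converge to an isometric \emph{endo}morphism, which can fail to be onto, so $\rh_S^{(0)}$ is in general not complete even though it is topologically equivalent to $\rh_S.$
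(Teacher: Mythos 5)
Your proof is correct and follows essentially the same route as the paper's: an $\tfrac{\ep}{3}$~argument for the topology, the isometry trick $\| \af_i^{-1} (a) - \af^{-1} (a) \| = \| \af (b) - \af_i (b) \|$ to get convergence of inverses for free, and the identification of the pointwise limits $\af$ and $\bt$ as mutually inverse endomorphisms for completeness. Your extra remark about ensuring $\sum_k 2^{-k} \| a_k \| < \I$ (or truncating the norm) so that the defining series converges is a legitimate point of care that the paper's proof passes over silently, but it does not change the argument.
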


\begin{proof}
We first prove that $\rh_S^{(0)}$
defines the right topology.
We have to show that if
$(\af_i)_{i \in I}$ is a net in $\Aut (A)$
such that $\rh_S^{(0)} (\af_i, \af) \to 0,$
then $\| \af_i (a) - \af (a) \| \to 0$ for all $a \in A.$
This follows by a standard $\frac{\ep}{3}$~argument from
$\| \af_i (a_k) - \af (a_k) \| \to 0$ for all $k \in \N$
and $\sup_{i \in I} \| \af_i \| \leq 1.$
To complete the proof of the first statement,
it is enough to show that
$\af_i \to \af$ pointwise implies  $\af_i^{-1} \to \af^{-1}$ pointwise.
For $a \in A,$ setting $b = \af^{-1} (a),$ we have
\[
\big\| \af_i^{-1} (a) - \af^{-1} (a) \big\|
 = \big\| \af_i^{-1} \big( \af (b) - \af_i (b) \big) \big\|
 = \| \af (b) - \af_i (b) \|
\to 0.
\]

It remains to prove that $\Aut (A)$ is complete in $\rh_S.$
Let $(\af_n)_{n \in \N}$ be a Cauchy sequence.
Then $(\af_n (a_k))_{n \in \N}$
is a Cauchy sequence for every $k \in \N.$
A standard $\frac{\ep}{3}$~argument
shows that $(\af_n (a))_{n \in \N}$ is Cauchy for all $a \in A.$
So $\af (a) = \limi{n} \af_n (a)$ exists
for all $a \in A.$
Clearly $\af$ is an endomorphism of $A.$
Similarly, $\bt (a) = \limi{n} \af_n^{-1} (a)$ exists
for all $a \in A,$
and $\bt$ is an endomorphism of $A.$

Now let $a \in A,$ and set $b = \bt (a).$
Then
\[
\| \af (\bt (a)) - a \|
  \leq \| \af (b) - \af_n (b) \|
        + \| \af_n \| \cdot \| \bt (a) - \af_n^{-1} (a) \|.
\]
Both terms on the right converge to~$0,$
so $\af ( \bt (a)) = a.$
Thus $\af \circ \bt = \id_A.$
Similarly $\bt \circ \af = \id_A.$
It follows that $\af \in \Aut (A),$ and that $\rh_S (\af_n, \af) \to 0.$
This proves completeness.
\end{proof}

The metric $\rh_S^{(0)}$ is usually not complete,
since in general a sequence of automorphisms
can converge pointwise to an endomorphism which is not surjective.

\begin{ntn}\label{SetNtn}
Let $A$ be a \sfsuca.
For a finite set $F \S A,$ for $\ep > 0,$ for $n \in \N,$
and for a nonzero positive element $x \in A,$
we define $W (F, \ep, n, x) \S \Aut (A)$ to be the set of all
$\af \in \Aut (A)$ such that
there are \mops\  $e_0, e_1, \ldots, e_n \in A$ with:
\begin{enumerate}
\item\label{2906SetNtn-1}
$\| \af (e_j) - e_{j + 1} \| < \ep$ for $j = 0, 1, \ldots, n - 1.$
\item\label{2906SetNtn-2}
$\| e_j a - a e_j \| < \ep$ for $j = 0, 1, \ldots, n$ and all $a \in F.$
\item\label{2906SetNtn-3}
$1 - \sum_{j = 0}^{n} e_j$ is \mvnt\  to a
\pj\  in ${\overline{x A x}}.$
\end{enumerate}
\end{ntn}

\begin{lem}\label{Intersect}
Let $A$ be a separable \sfsuca\  with real rank zero.
Then the set of $\af \in \Aut (A)$ which have the \trp\  is a
countable intersection of sets of the form
$W (F, \ep, n, x).$
\end{lem}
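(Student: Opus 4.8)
The plan is to observe that, directly from Definition~\ref{TRPDfn} and Notation~\ref{SetNtn}, an automorphism $\af$ has the \trp\ precisely when $\af \in W(F, \ep, n, x)$ for every finite $F \S A,$ every $\ep > 0,$ every $n \in \N,$ and every nonzero positive $x \in A;$ that is,
\[
\{ \af \in \Aut(A) : \af {\mbox{ has the \trp}} \}
 = \bigcap_{F, \ep, n, x} W(F, \ep, n, x).
\]
The entire content is to replace this uncountable intersection by a countable one, which I would do by discretizing each of the four parameters in turn, always using monotonicity of $W$ together with simple commutator and subequivalence estimates that let the witnessing \mops\ for one parameter value serve for a nearby value.

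The main step concerns the positive element $x,$ and here I would exploit real rank zero to pass from positive elements to \pj s. First I claim the intersection is unchanged if $x$ is restricted to range only over nonzero \pj s. One inclusion is trivial. For the other, given a nonzero positive $x,$ real rank zero provides a nonzero \pj\ $p \in \O{xAx};$ any \mops\ witnessing $\af \in W(F,\ep,n,p)$ give $1 - \sum_j e_j \precsim q$ with $q$ a sub\pj\ of $p,$ and then $q \in \O{xAx},$ so the same \mops\ witness $\af \in W(F, \ep, n, x).$ Note that for a \pj\ $p$ the defining condition simplifies, since the \pj s of $\O{pAp} = pAp$ are exactly the sub\pj s of $p,$ so condition~(\ref{2906SetNtn-3}) becomes $1 - \sum_j e_j \precsim p.$ Next I would discretize the \pj s: the set of nonzero \pj s of $A$ is norm-separable, so it has a countable dense subset $\{p_1, p_2, \ldots\},$ and whenever $\| p - p_k \| < 1$ one has $p \sim p_k,$ whence $1 - \sum_j e_j \precsim p \iff 1 - \sum_j e_j \precsim p_k$ and therefore $W(F, \ep, n, p) = W(F, \ep, n, p_k).$ Thus the intersection over all nonzero \pj s equals the intersection over the $p_k.$

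The remaining two reductions are routine. Fix a countable dense set $D = \{ a_1, a_2, \ldots \} \S A.$ Since shrinking $\ep$ shrinks $W,$ the intersection over all $\ep > 0$ equals the intersection over $\ep \in \{ 1/m : m \in \N \}.$ Finally, given arbitrary finite $F \S A$ and $\ep > 0,$ choose $F' \S D$ approximating $F$ within $\ep/4$ and $m$ with $1/m \leq \ep/2;$ using $\| e_j \| \leq 1,$ any \mops\ witnessing $\af \in W(F', 1/m, n, p_k)$ satisfy $\| e_j a - a e_j \| \leq \| e_j a' - a' e_j \| + 2 \| a - a' \| < \ep$ for $a \in F,$ so the same \mops\ witness $\af \in W(F, \ep, n, p_k).$ Hence the \trp\ set equals the countable intersection $\bigcap W(F', 1/m, n, p_k)$ over finite $F' \S D$ and $m, n, k \in \N.$ The one genuinely non-formal point is the first half of the main step: it is exactly there that the real rank zero hypothesis enters, manufacturing a \pj\ inside $\O{xAx}$ and thereby converting the comparison against a \hsa\ into \mvnc\ against a single \pj.
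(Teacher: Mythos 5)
Your proof is correct and follows essentially the same route as the paper's: discretize $F$ by a countable dense subset, $\ep$ by $\tfrac{1}{m},$ and reduce $x$ to a countable family of \pj s using real rank zero to produce a \pj\ in ${\overline{x A x}}$ and the fact that \pj s at distance less than $1$ are \mvnt. The paper compresses all of this into ``one easily checks''; you have simply supplied the details, including the one genuinely non-formal point (where real rank zero enters), exactly as intended.
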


\begin{proof}
Clearly every $\af \in \Aut (A)$ with the \trp\  is in every
$W (F, \ep, n, x).$

Choose a countable dense subset $S \S A,$
and let ${\mathcal{F}}$ be the set of all finite subsets of~$S.$
Also choose a countable set $P$ of nonzero \pj s in $A$ such that
every nonzero \pj\  in $A$ is \mvnt\  to a \pj\  in $P.$
This choice is possible because $A$ is separable and
because any two \pj s $p, q \in A$ with $\| p - q \| < 1$
are necessarily \mvnt.
Then one easily checks that $\af \in \Aut (A)$ has the \trp\  %
\ifo\  %
\[
\af \in \bigcap_{F \in {\mathcal{F}}}
            \bigcap_{m = 1}^{\I} \bigcap_{n = 1}^{\I}
            \bigcap_{p \in P} W \left( F, \tfrac{1}{m}, n, p \right).
\]
This completes the proof.
\end{proof}

\begin{lem}\label{Open}
Let $A$ be a \sfsuca.
For every finite set $F \S A,$  every $\ep > 0,$  every $n \in \N,$
and every nonzero positive element $x \in A,$
the set $W (F, \ep, n, x)$ is open in $\Aut (A).$
\end{lem}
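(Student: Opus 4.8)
The plan is to show that $W(F, \ep, n, x)$ is open by proving that for any $\af \in W(F, \ep, n, x)$, every automorphism $\bt$ sufficiently close to $\af$ in the metric $\rh_S$ also lies in $W(F, \ep, n, x)$. So fix $\af \in W(F, \ep, n, x)$ and let $e_0, e_1, \ldots, e_n \in A$ be \mops\ witnessing this membership, so that the strict inequalities in Notation~\ref{SetNtn}\eqref{2906SetNtn-1} and~\eqref{2906SetNtn-2} hold, say with some slack: set
\[
\dt = \ep - \max \Big( \max_{0 \leq j \leq n - 1} \| \af (e_j) - e_{j + 1} \|,
   \,\, \max_{\substack{0 \leq j \leq n \\ a \in F}} \| e_j a - a e_j \| \Big) > 0.
\]
The key observation is that conditions~\eqref{2906SetNtn-2} and~\eqref{2906SetNtn-3} do not involve $\bt$ at all; only condition~\eqref{2906SetNtn-1}, the approximate-shift condition, refers to the automorphism. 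So the \emph{same} projections $e_0, \ldots, e_n$ will witness $\bt \in W(F, \ep, n, x)$, provided $\bt$ is close enough to $\af$ that $\| \bt (e_j) - e_{j+1} \| < \ep$ still holds for all $j$.

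The remaining point is to translate closeness in the metric $\rh_S$ into closeness of $\bt(e_j)$ and $\af(e_j)$, and here the only subtlety is that the $e_j$ need not lie in the chosen dense set $S$. I would handle this by the standard $\frac{\ep}{3}$ trick used already in the proof of Lemma~\ref{L:AutTop}: for each $j$ pick $a_{k(j)} \in S$ with $\| e_j - a_{k(j)} \|$ small, and estimate
\[
\| \bt (e_j) - \af (e_j) \|
 \leq \| \bt (e_j) - \bt (a_{k(j)}) \|
    + \| \bt (a_{k(j)}) - \af (a_{k(j)}) \|
    + \| \af (a_{k(j)}) - \af (e_j) \|.
\]
The outer two terms are bounded by $\| e_j - a_{k(j)} \|$ since automorphisms of a unital \ca\ are isometric, and the middle term is controlled by $\rh_S^{(0)}(\af, \bt) \leq \rh_S(\af, \bt)$ up to the factor $2^{-k(j)}$. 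Choosing the $a_{k(j)}$ within $\frac{\dt}{3}$ of the corresponding $e_j$ and then demanding $\rh_S(\af, \bt) < 2^{-\max_j k(j)} \frac{\dt}{3}$ forces $\| \bt(e_j) - \af(e_j) \| < \dt$, hence $\| \bt(e_j) - e_{j+1} \| < \ep$ for every $j$, as desired.

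There is no serious obstacle here; the result is genuinely elementary once one notices that two of the three defining conditions are independent of the automorphism. The one thing to be careful about is the interchange of quantifiers: the witnessing projections $e_0, \ldots, e_n$ are fixed \emph{first} (depending only on $\af$), and then the radius of the $\rh_S$-ball around $\af$ is chosen afterward in terms of those finitely many fixed elements. Because only finitely many $e_j$ are involved, one takes the maximum of finitely many indices $k(j)$ and the argument goes through uniformly.
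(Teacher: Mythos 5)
Your proof is correct and takes essentially the same approach as the paper: fix the witnessing projections, observe that only condition~(\ref{2906SetNtn-1}) involves the automorphism, and use the slack $\dt$ to absorb the perturbation. The paper leaves the translation from the metric $\rh_S$ to closeness on the finitely many $e_j$ implicit (it is just the statement that $\rh_S$ induces the topology of pointwise norm convergence, Lemma~\ref{L:AutTop}), whereas you spell it out; otherwise the arguments coincide.
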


\begin{proof}
Let $\af \in W (F, \ep, n, x).$
Choose \mops\  $e_0, e_1, \ldots, e_n$
in~$A$
such that conditions~(\ref{2906SetNtn-1}),
(\ref{2906SetNtn-2}), and~(\ref{2906SetNtn-3})
in Notation~\ref{SetNtn} hold.
Set
\[
\dt
 = \min \big( \big\{ \ep - \| \af (e_j) - e_{j + 1} \|
                        \colon 0 \leq j \leq n - 1 \big\} \big).
\]
If $\bt \in \Aut (A)$ satisfies $\| \bt (e_j ) - \af (e_j) \| < \dt$
for $j = 0, 1, \ldots, n - 1,$
we immediately get
$\| \bt (e_j) - e_{j + 1} \| < \ep$ for $j = 0, 1, \ldots, n - 1.$
This is the first condition for membership in $W (F, \ep, n, x).$
Since the other two conditions don't mention the automorphism,
we get $\bt \in W (F, \ep, n, x).$
\end{proof}

The previous two results show that the set of automorphisms
with the \trp\  is a $G_{\dt}$-set in $\Aut (A).$
We next show that,
for suitable \ca s $A,$
this set is dense.
This is harder.

We recall the Jiang-Su algebra~$Z,$
from Theorem~2.9 of~\cite{JS}.
It is a simple separable unital \ca\  which is not of type~$I$
and which has a unique tracial state $\ta_0.$

\begin{thm}\label{T:PrpOfZ}
The Jiang-Su algebra~$Z$ has the following properties:
\begin{enumerate}
\item\label{StSelfAbs}
The \hm\  $\io_Z \colon Z \to Z \otimes Z,$
given by $\io (a) = 1 \otimes a,$
is approximately unitarily equivalent to an isomorphism.
\item\label{IsoInfTP}
The infinite tensor product $\bigotimes_{n \in \Z} Z$ is
isomorphic to~$Z.$
\item\label{Abs}
Let $A$ be a simple separable infinite dimensional unital \ca\  with
tracial rank zero in the sense of \cite{LnTAF} and~\cite{LnTTR},
and which satisfies the Universal Coefficient Theorem
(Theorem~1.17 of~\cite{RSUCT}).
Then $Z \otimes A \cong A.$
\end{enumerate}
\end{thm}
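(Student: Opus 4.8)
The plan is to handle the three assertions in order, obtaining~(\ref{IsoInfTP}) from~(\ref{StSelfAbs}) and reserving the substantive argument for~(\ref{Abs}).

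For~(\ref{StSelfAbs}), I would observe that this is exactly the statement that $Z$ is strongly self-absorbing. Jiang and Su construct $Z$ as an inductive limit of prime dimension drop algebras and prove in~\cite{JS} both that $Z \otimes Z \cong Z$ and the uniqueness statements (a unique tracial state, approximately inner flip, and unital endomorphisms approximately unitarily equivalent to $\id_Z$) that upgrade this to the strong self-absorption of~$Z$. Since $\io_Z$ and any fixed isomorphism $Z \to Z \otimes Z$ are both unital homomorphisms, these uniqueness facts give that $\io_Z$ is approximately unitarily equivalent to an isomorphism, which is~(\ref{StSelfAbs}).

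For~(\ref{IsoInfTP}), I would deduce it from~(\ref{StSelfAbs}) using the standard consequence that a strongly self-absorbing \ca\  $D$ satisfies $\bigotimes_{k = 1}^{\I} D \cong D$ (one builds the isomorphism as a limit of the maps supplied by~(\ref{StSelfAbs}) together with an intertwining argument). The two sided product $\bigotimes_{n \in \Z} Z$ is isomorphic to the one sided product $\bigotimes_{n \in \N} Z$, since both index sets are countably infinite and the connecting maps are the unital inclusions; combining this with $\bigotimes_{n \in \N} Z \cong Z$ gives~(\ref{IsoInfTP}).

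For~(\ref{Abs}), the strategy is classification. I would show that $Z \otimes A$ belongs to the class to which Lin's classification theorem for simple separable unital nuclear \ca s with tracial rank zero satisfying the UCT applies, and that it has the same Elliott invariant as~$A$; the uniqueness theorem then yields $Z \otimes A \cong A$. Matching invariants is routine: $Z$ is $KK$-equivalent to~$\C$, so the K\"unneth formula (available since $A$ satisfies the UCT of~\cite{RSUCT}) shows that $\io_Z \colon a \mapsto 1_Z \otimes a$ induces an isomorphism $K_*(A) \to K_*(Z \otimes A)$ of ordered groups carrying $[1_A]$ to $[1_{Z \otimes A}]$, while the unique tracial state $\ta_0$ of~$Z$ makes $\ta \mapsto \ta_0 \otimes \ta$ a bijection $T(A) \to T(Z \otimes A)$ compatible with the pairing. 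Simplicity, separability, unitality, nuclearity, and the UCT for $Z \otimes A$ follow from the corresponding properties of $Z$ and~$A$ (nuclearity of~$Z$ for simplicity of the minimal tensor product, and the K\"unneth theorem for the UCT). The hard part is verifying that $Z \otimes A$ again has tracial rank zero: this is delicate because the dimension drop building blocks of~$Z$ are not approximately finite dimensional, so the interval coordinate can only be absorbed in the limit, not at any finite stage. I would either invoke a preservation result to the effect that tensoring a simple unital nuclear tracial rank zero \ca\  with~$Z$ preserves tracial rank zero, or use that the classifiable algebra~$A$ is by~\cite{EGL2} a simple unital AH~algebra of real rank zero and slow dimension growth and check directly that $Z \otimes A$ stays of this form. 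Once tracial rank zero of $Z \otimes A$ is established, applying the uniqueness half of Lin's classification theorem to the identification of Elliott invariants above completes the proof.
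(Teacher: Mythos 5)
Your treatments of~(\ref{StSelfAbs}) and~(\ref{IsoInfTP}) are correct and essentially what the paper does: the paper obtains~(\ref{StSelfAbs}) by combining $Z \otimes Z \cong Z$ (Theorem~8.7 of~\cite{JS}) with the approximate unitary equivalence of any two unital endomorphisms of~$Z$ (from Theorem~7.6 of~\cite{JS}), and simply cites Corollary~8.8 of~\cite{JS} for~(\ref{IsoInfTP}); deriving~(\ref{IsoInfTP}) from~(\ref{StSelfAbs}) by an intertwining argument is an equally valid alternative. The problem is in~(\ref{Abs}), where your argument has a genuine gap at exactly the point you yourself flag as ``the hard part'': you never establish that $Z \otimes A$ has tracial rank zero, and without that you cannot apply the uniqueness half of Lin's classification theorem to the pair $A,$ $Z \otimes A.$ Neither of the two ways you propose to fill the hole works as stated. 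A ``preservation result'' for tracial rank zero under tensoring with~$Z$ is not available off the shelf: the standard preservation results are for tensoring with UHF (or AF) algebras, and the usual proofs that $Z \otimes A$ has tracial rank zero for $A$ in this class go \emph{through} the $Z$-stability of~$A,$ that is, through the very conclusion $Z \otimes A \cong A$ you are trying to prove, so invoking such a result here is circular unless you supply an independent argument. The second alternative, ``check directly that $Z \otimes A$ stays of this form'' (AH of real rank zero with slow dimension growth), is not a direct check at all: $Z$ is projectionless and is not an inductive limit of the relevant building blocks with real rank zero at any finite stage, so exhibiting $Z \otimes A$ as such an AH~algebra is essentially as hard as the theorem itself. (Your identification of the Elliott invariants of $A$ and $Z \otimes A$ via the K\"{u}nneth theorem and the unique trace on~$Z$ is fine; the missing piece is solely membership of $Z \otimes A$ in the classifiable class.)

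The paper's proof sidesteps the issue entirely and never needs to verify any property of $Z \otimes A.$ It first uses Proposition~3.7 of~\cite{PhtRp2} (which rests on Lin's classification theorem, Theorem~5.2 of~\cite{Ln15}) to see that $A$ itself is a simple AH~algebra with real rank zero and no dimension growth; then Theorem~2.1 of~\cite{EGL1} shows that such an algebra is approximately divisible; and finally Theorem~2.3 of~\cite{TW} says that a separable approximately divisible \ca\  absorbs~$Z$ tensorially. I recommend replacing the classification-of-$Z \otimes A$ strategy by this approximate-divisibility argument; if you insist on the classification route, you must first prove that $Z \otimes A$ has tracial rank zero, and the only non-circular way to do that in this setting is in effect to prove $Z$-stability of~$A$ first.
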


\begin{proof}
For~(\ref{StSelfAbs}),
combine $Z \otimes Z \cong Z$
(Theorem~8.7 of~\cite{JS})
with the fact that any two unital endomorphisms of~$Z$
are approximately unitarily equivalent
(a consequence of Theorem~7.6 of~\cite{JS}).
Part~(\ref{IsoInfTP}) is Corollary~8.8 of~\cite{JS}
(with different indexing).

For~(\ref{Abs}),
note that Proposition~3.7 of~\cite{PhtRp2}
(proved using Lin's classification theorem,
Theorem~5.2 of~\cite{Ln15}) implies that
$A$ is a simple AH~algebra with real rank zero
and no dimension growth.
Now Theorem~2.1 of~\cite{EGL1}
implies that $A$ is approximately divisible,
so that $Z \otimes A \cong A$ by Theorem~2.3 of~\cite{TW}.
\end{proof}

\begin{rmk}
The \ca s covered by Theorem~\ref{T:PrpOfZ}(\ref{Abs})
are exactly the simple unital AH~algebras with real rank zero
and no dimension growth, as classified in~\cite{EGL2}.
One can see this from the proofs of the results
used in the proof of Theorem~\ref{T:PrpOfZ}(\ref{Abs}).
\end{rmk}

\begin{cor}\label{C:Z}
There exists $\ph \in \Aut (Z)$ which has the \nptrpw~$T (Z).$
\end{cor}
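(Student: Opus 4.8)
The plan is to produce $\ph$ as the tensor shift on $\bigotimes_{n \in \Z} Z$, transported to $Z$ through the isomorphism of Theorem~\ref{T:PrpOfZ}(\ref{IsoInfTP}). Let $\ta_0$ be the unique tracial state on~$Z,$ recalled just before Theorem~\ref{T:PrpOfZ}. The key input is Corollary~\ref{C:TRPForShift}, applied with $A = Z$: it will give that the shift $\sm$ on $B = \bigotimes_{n \in \Z} Z$ has the \nptrpw\ the infinite tensor product tracial state obtained from $\ta_0$, provided I can check its one hypothesis, namely that the von Neumann algebra $\pi_{\ta_0} (Z)''$ has no minimal \pj s. Everything else is then transport of structure.

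So the first real step is to verify that $M = \pi_{\ta_0}(Z)''$ has no minimal \pj s. Since $Z$ is simple and $\ta_0 \neq 0$, the representation $\pi_{\ta_0}$ is faithful, its kernel being a proper, hence zero, ideal of~$Z$. Because $\ta_0$ is the unique tracial state, it is extremal in $T(Z)$, so $M$ is a factor, and it carries the faithful normal tracial state extending $\ta_0$, hence is finite. A finite factor with a minimal \pj\ is of the form $M_k (\C)$; but then $Z$, being a \ca\ embedded in $M_k(\C)$ via the faithful map $\pi_{\ta_0}$, would be finite dimensional, contradicting that $Z$ is not of type~$I$. Therefore $M$ is a factor of type~${\mathrm{II}}_1$, and in particular has no minimal \pj s.

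With the hypothesis confirmed, Corollary~\ref{C:TRPForShift} yields that the shift $\sm$ on $B = \bigotimes_{n \in \Z} Z$ of Definition~\ref{D:Shift} has the \nptrpw\ the tensor product tracial state~$\ta$. Next I would invoke Theorem~\ref{T:PrpOfZ}(\ref{IsoInfTP}) to fix an isomorphism $\Phi \colon B \to Z$ and set $\ph = \Phi \circ \sm \circ \Phi^{-1} \in \Aut(Z)$. Since the \nptrp\ is manifestly an isomorphism invariant, $\ph$ has the \nptrpw\ the tracial state $\ta \circ \Phi^{-1}$. Finally, $Z$ has a unique tracial state, so $\ta \circ \Phi^{-1} = \ta_0$ and $T(Z) = \{ \ta_0 \}$; hence $\ph$ has the \nptrpw\ $T(Z)$, as required. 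I expect the only point needing genuine care to be the verification that $\pi_{\ta_0}(Z)''$ has no minimal \pj s; the remaining steps are formal consequences of the cited results.
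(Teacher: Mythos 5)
Your proposal is correct and follows exactly the paper's route: the paper's proof is the one-line ``combine Theorem~\ref{T:PrpOfZ}(\ref{IsoInfTP}) and Corollary~\ref{C:TRPForShift},'' which is precisely your transport of the shift on $\bigotimes_{n \in \Z} Z$ through the isomorphism with~$Z.$ Your verification that $\pi_{\ta_0}(Z)''$ is a ${\mathrm{II}}_1$ factor (hence has no minimal projections) is a correct filling-in of the hypothesis the paper leaves implicit.
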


\begin{proof}
Combine Theorem~\ref{T:PrpOfZ}(\ref{IsoInfTP})
and Corollary~\ref{C:TRPForShift}.
\end{proof}

\begin{lem}\label{L:TRMForInnPtb}
Let $A$ be an \idsfsuca,
and let $\af \in \Aut (A)$ have the \trp.
Let $u \in A$ be unitary.
Then $\Ad (u) \circ \af$ has the \trp.
\end{lem}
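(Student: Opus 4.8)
The plan is to show that the Rokhlin tower obtained for $\af$ already serves, after only a cosmetic modification, as a tower for $\Ad (u) \circ \af.$ The only condition in Definition~\ref{TRPDfn} that involves the automorphism is~(\ref{2906TRPDfn-1}), the shift condition $\| \psi (e_j) - e_{j + 1} \| < \ep,$ so conditions~(\ref{2906TRPDfn-2}) and~(\ref{2906TRPDfn-3}) transfer verbatim. The issue is that applying $\Ad (u) \circ \af$ to $e_j$ conjugates $\af (e_j)$ by~$u,$ and the resulting element need not be close to $e_{j + 1}$ unless $u$ nearly commutes with $e_{j + 1}.$ So I would arrange for the projections to commute approximately with~$u$ by simply including $u$ among the elements that the tower is required to almost-commute with.

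Concretely, given $F \S A$ finite, $\ep > 0,$ $n \in \N,$ and a nonzero positive $x \in A,$ I would choose $\ep_0 > 0$ small (say $\ep_0 \leq \tfrac{1}{3} \ep$, with the exact bound fixed at the end) and apply the \trp\  of~$\af$ to the enlarged finite set $F' = F \cup \{ u \}$ and tolerance $\ep_0,$ with the same $n$ and~$x.$ This yields \mops\  $e_0, e_1, \ldots, e_n$ satisfying $\| \af (e_j) - e_{j + 1} \| < \ep_0,$ $\| e_j a - a e_j \| < \ep_0$ for all $a \in F',$ and the equivalence condition~(\ref{2906TRPDfn-3}). Since $u \in F',$ we in particular have $\| e_{j} u - u e_{j} \| < \ep_0,$ and because $u$ is unitary this gives $\| u e_{j} u^* - e_{j} \| < \ep_0$ as well. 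The same projections $e_0, \ldots, e_n$ will serve for $\Ad (u) \circ \af$; conditions~(\ref{2906TRPDfn-2}) and~(\ref{2906TRPDfn-3}) are immediate since the $e_j$ and the set $F \S F'$ are unchanged.

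It remains to verify the shift condition for $\ps = \Ad (u) \circ \af.$ For $j = 0, 1, \ldots, n - 1,$ I would estimate
\[
\| \ps (e_j) - e_{j + 1} \|
 = \| u \af (e_j) u^* - e_{j + 1} \|
 \leq \| u \af (e_j) u^* - u e_{j + 1} u^* \|
      + \| u e_{j + 1} u^* - e_{j + 1} \|.
\]
The first term equals $\| \af (e_j) - e_{j + 1} \| < \ep_0$ since conjugation by a unitary is isometric, and the second is $< \ep_0$ by the almost-commutation of $u$ with $e_{j + 1}$ noted above. Hence $\| \ps (e_j) - e_{j + 1} \| < 2 \ep_0 \leq \ep,$ which is condition~(\ref{2906TRPDfn-1}). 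I expect no genuine obstacle here: the only thing to watch is bookkeeping the constants so that $2\ep_0 \leq \ep$ (choosing $\ep_0 = \tfrac{1}{2}\ep$ suffices). The conceptual point is simply that enlarging the test set $F$ to contain $u$ forces the tower to be approximately $u$-invariant, which absorbs the inner perturbation.
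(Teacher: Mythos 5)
Your proposal is correct and is essentially the paper's own argument: apply the tracial Rokhlin property of $\af$ to $F \cup \{u\}$ with tolerance $\tfrac{1}{2}\ep,$ note that only the shift condition involves the automorphism, and absorb the conjugation by $u$ via the same triangle inequality $\| u \af (e_j) u^* - e_{j+1} \| \leq \| \af (e_j) - e_{j+1} \| + \| u e_{j+1} u^* - e_{j+1} \|.$ No differences worth noting.
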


\begin{proof}
Set $\bt = \Ad (u) \circ \af.$
Let $S \S A$ be a finite set, let $\ep > 0,$ let $n \in \N,$
and let $x \in A$ be a nonzero positive element.
Apply the \trp\  for $\af$ with $S \cup \{ u \}$ in place of $S,$
with $\tfrac{1}{2} \ep$ in place of $\ep,$
and with $n$ and $x$ as given.
Let $e_0, e_1, \ldots, e_n$ be the resulting \pj s.
We claim that these verify the requirements of the
definition for $S,$ $\ep,$ $n,$ and $x.$
We need only check that $\| \bt (e_j) - e_{j + 1} \| < \ep$
for $j = 0, 1, \ldots, n - 1.$
We have
\begin{align*}
\| \bt (e_j) - e_{j + 1} \|
& = \| u \af (e_j) u^* - e_{j + 1} \|
  \leq \|\af (e_j) - e_{j + 1} \|
        + \| u e_{j + 1} u^* - e_{j + 1} \|
               \\
& < \tfrac{1}{2} \ep + \tfrac{1}{2} \ep
  = \ep.
\end{align*}
This completes the proof.
\end{proof}

\begin{prp}\label{P:TRPDense}
Let $A$ be a simple separable unital \ca\  with tracial
rank zero and such that $Z \otimes A \cong A.$
Let $\af \in \Aut (A).$
Then for every finite set $F \S A$ and every $\ep > 0,$
there exists $\bt \in \Aut (A)$ such that:
\begin{enumerate}
\item\label{Dense:TRP}
$\bt$ has the \trp.
\item\label{Dense:App}
$\| \bt (a) - \af (a) \| < \ep$ for all $a \in F.$
\item\label{Dense:AUE}
$\bt$ is approximately unitarily equivalent to $\af.$
\end{enumerate}
\end{prp}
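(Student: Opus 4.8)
The plan is to realize $\bt$ as an isomorphic copy of $\ph \otimes \af$, where $\ph \in \Aut (Z)$ is the automorphism from Corollary~\ref{C:Z} having the \nptrpw~$T (Z),$ inserted into a ``spare'' tensor factor of~$Z$ supplied by $Z$-stability, and then to correct it by an inner automorphism at the very end. Fix an isomorphism $\te \colon A \to Z \otimes A$ and set $\af_1 = \te \circ \af \circ \te^{-1} \in \Aut (Z \otimes A).$ By Theorem~\ref{T:PrpOfZ}(\ref{StSelfAbs}) there are unitaries $w_k \in Z \otimes Z$ with $\Ad (w_k) \circ \io_Z \to \mu$ pointwise for some isomorphism $\mu \colon Z \to Z \otimes Z$; tensoring with $\id_A,$ the embedding $\et = \io_Z \otimes \id_A \colon Z \otimes A \to Z \otimes Z \otimes A$ is approximately unitarily equivalent, via $v_k = w_k \otimes 1_A,$ to the isomorphism $\ld = \mu \otimes \id_A.$ I then set $\Gm = \ph \otimes \af_1 \in \Aut (Z \otimes Z \otimes A)$ (with $\ph$ on the first factor and $\af_1$ on $Z \otimes A$) and put $\bt_1 = \ld^{-1} \circ \Gm \circ \ld \in \Aut (Z \otimes A).$ The goal is to show that $\bt_1$ has the \trp\ and is approximately unitarily equivalent to~$\af_1,$ after which transport by $\te$ and an inner perturbation yield all three conclusions.

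For the \trp, I first check that $\Gm$ has the \nptrpw~$T (Z \otimes Z \otimes A).$ Given finite $S \S Z \otimes Z \otimes A,$ $\ep > 0,$ and $n,$ approximate the elements of $S$ by finite sums $\sum_i d_i \otimes b_i$ with $d_i \in Z$ (first factor) and $b_i \in Z \otimes A,$ collect the $d_i$ into a finite set $S_Z \S Z,$ and apply Definition~\ref{D:NPjTRP} for $\ph$ to $S_Z,$ obtaining $c_0, \ldots, c_n \in Z.$ The elements $\tilde{c}_j = c_j \otimes 1_{Z \otimes A}$ are positive and orthogonal, satisfy $\| \Gm (\tilde{c}_j) - \tilde{c}_{j + 1} \| = \| \ph (c_j) - c_{j + 1} \|,$ and nearly commute with $S$ since $c_j \otimes 1$ commutes with $1 \otimes b_i$ and nearly commutes with $d_i \otimes 1.$ For the remainder, any $\ta \in T (Z \otimes Z \otimes A)$ restricts on the first factor to the unique tracial state $\ta_0$ of~$Z,$ so $\ta (1 - \sum_j \tilde{c}_j) = \ta_0 (1 - \sum_j c_j),$ which is small. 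Thus $\Gm$ has the \nptrp; since $Z \otimes Z \otimes A \cong A$ is a simple separable unital \ca\ with tracial rank zero, Proposition~\ref{P:NPImpTRP} shows $\Gm$ has the \trp, and hence so does its isomorphic conjugate $\bt_1.$

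For approximate unitary equivalence, factor $\Gm = (\ph \otimes \id_{Z \otimes A}) \circ (\id_Z \otimes \af_1).$ Since $\ph$ and $\id_Z$ are both unital endomorphisms of~$Z,$ they are approximately unitarily equivalent (the fact used in proving Theorem~\ref{T:PrpOfZ}(\ref{StSelfAbs})), so there are unitaries $z_k \in Z$ with $\Ad (z_k) \to \ph$ pointwise; then $\Ad (z_k \otimes 1) \to \ph \otimes \id_{Z \otimes A},$ so $\Gm$ is approximately unitarily equivalent to $\id_Z \otimes \af_1.$ Conjugating by $\ld,$ it follows that $\bt_1$ is approximately unitarily equivalent to $\ld^{-1} (\id_Z \otimes \af_1) \ld.$ Now $\et$ satisfies the exact intertwining $(\id_Z \otimes \af_1) \circ \et = \et \circ \af_1,$ and from $\Ad (v_k) \circ \et \to \ld$ one gets both $\ld \approx \Ad (v_k) \et$ and $\ld^{-1} \et \approx \Ad (\ld^{-1} (v_k)^*).$ Substituting these into $\ld^{-1} (\id_Z \otimes \af_1) \ld$ produces unitaries $U_k = \ld^{-1} \big( (\id_Z \otimes \af_1)(v_k)\, v_k^* \big) \in Z \otimes A$ with $\Ad (U_k) \circ \af_1 \to \ld^{-1} (\id_Z \otimes \af_1) \ld$ pointwise. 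Hence $\bt_1$ is approximately unitarily equivalent to $\af_1,$ and $\bt_0 = \te^{-1} \bt_1 \te \in \Aut (A)$ has the \trp\ and is approximately unitarily equivalent to~$\af.$

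To finish, use approximate unitary equivalence of $\bt_0$ and $\af$ to choose a unitary $u \in A$ with $\| \Ad (u) \bt_0 (a) - \af (a) \| < \ep$ for all $a \in F,$ and set $\bt = \Ad (u) \circ \bt_0.$ Then~(\ref{Dense:App}) holds by construction, $\bt$ has the \trp\ by Lemma~\ref{L:TRMForInnPtb}, and $\bt$ is approximately unitarily equivalent to $\bt_0$ and hence to~$\af,$ giving~(\ref{Dense:AUE}). I expect the main obstacle to be the computation in the third paragraph: because $\et$ is not surjective, $\ld^{-1} \circ \et$ cannot be inverted directly, and one must carefully substitute the approximations $\ld \approx \Ad (v_k) \et$ and $\ld^{-1} \et \approx \Ad (\ld^{-1} (v_k)^*)$ (using the same index $k$ and invoking that composition with the norm-nonincreasing maps $\ld^{-1},$ $\af_1,$ and inner automorphisms preserves pointwise convergence) to extract the single sequence $(U_k).$ The verification that $\Gm$ has the weak tracial Rokhlin property is routine once the trace-restriction observation is in hand.
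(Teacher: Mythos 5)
Your proposal is correct and follows essentially the same route as the paper: tensor $\af$ with the weak tracial Rokhlin automorphism $\ph$ of~$Z$ from Corollary~\ref{C:Z}, verify the weak \trp\ of the tensor product using elements $c_j \otimes 1$ and the uniqueness of the trace on~$Z,$ upgrade to the \trp\ via Proposition~\ref{P:NPImpTRP}, and transport back through the approximate unitary equivalence of the unital embedding $Z \otimes A \to Z \otimes Z \otimes A$ with an isomorphism, correcting by an inner automorphism (your $U_k$ is exactly the paper's $v_n = \gm^{-1}\big(u_n \cdot (\ph \otimes \af)(u_n^*)\big)$). The only cosmetic difference is your extra intermediate step showing $\Gm$ approximately unitarily equivalent to $\id_Z \otimes \af_1,$ which is redundant since $\Gm \circ \et = \et \circ \af_1$ already holds exactly.
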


\begin{proof}
\Wolog\  $\| a \| \leq 1$ for all $a \in F.$

For any unital \ca\  $B,$
define $\io_B \colon B \to Z \otimes B$
by $\io (b) = 1 \otimes b$ for $b \in B.$
Using Theorem~\ref{T:PrpOfZ}(\ref{StSelfAbs})
and tensoring with $\id_A,$
we see that
$\io_{Z \otimes A} \colon Z \otimes A \to Z \otimes Z \otimes A$
is approximately unitarily equivalent to an isomorphism.
Since $Z \otimes A \cong A$ by hypothesis,
it follows that there is an isomorphism $\gm \colon A \to Z \otimes A$
which is approximately unitarily equivalent to $\io_A.$
In particular,
there is sequence $(u_n)_{n \in \N}$
of unitaries in $Z \otimes A$ such that
$\limi{n} \| u_n (1 \otimes a) u_n^* - \gm (a) \| = 0$
for all $a \in A.$

Use Corollary~\ref{C:Z} to find $\ph \in \Aut (Z)$
with the \nptrpw~$T (Z).$
For $n \in \N,$ define a unitary $v_n \in A$ by
$v_n = \gm^{-1} \big( u_n \cdot (\ph \otimes \af) (u_n^*) \big).$
Then set
$\bt_n = \Ad (v_n) \circ \gm^{-1} \circ (\ph \otimes \af) \circ \gm.$

We prove part~(\ref{Dense:TRP}) for $\bt_n$ for all~$n.$
We begin by proving that
$\ph \otimes \af$ has the \nptrpw\  $T (Z \otimes A)$
(Definition~\ref{D:NPjTRP}).
It suffices to consider finite sets consisting of
elementary tensors.
Thus,
let $S \S Z$ and $T \S A$ be finite, let $\ep > 0,$ and let $n \in \N.$
We will verify the conditions of Definition~\ref{D:NPjTRP}
with
\[
\{ x \otimes a \colon {\mbox{$x \in S$ and $a \in T$}} \}
\]
in place of $S,$ and with $\ep$ and $n$ as given.
\Wolog\  $\| a \| \leq 1$ for all $a \in T.$
Let $\ta_0$ be the unique tracial state on $Z.$
Because $\ph$ has the \nptrpw\  $\{ \ta_0 \},$
there exist $d_0, d_1, \ldots, d_n \in Z$ such that:
\begin{enumerate}
\item\label{Pf:Pos}
$0 \leq d_j \leq 1$ for $j = 0, 1, \ldots, n.$
\item\label{Pf:Orth}
$d_j d_k = 0$ for $j, k = 0, 1, \ldots, n$ with $j \neq k.$
\item\label{Pf:Shift}
$\| \ph (d_j) - d_{j + 1} \| < \ep$ for $j = 0, 1, \ldots, n - 1.$
\item\label{Pf:Comm}
$\| d_j x - x d_j \| < \ep$ for $j = 0, 1, \ldots, n$ and all $x \in S.$
\item\label{Pf:Rem}
With $d = \sum_{j = 0}^{n} d_j,$ we have $\ta_0 (1 - d) < \ep.$
\end{enumerate}
Set $c_j = d_j \otimes 1$ for $j = 0, 1, \ldots, n.$
The first four conditions of Definition~\ref{D:NPjTRP},
which are the analogs of the first four conditions above,
are immediate.
For the last part,
let $\ta \in T (Z \otimes A),$
and use Lemma~2.12 of~\cite{JS} to write $\ta = \ta_0 \otimes \sm$
for some $\sm \in T (A).$
Then
\[
\ta \left( 1 - \sssum{j = 0}{n} c_j \right)
  = \ta_0 \left( 1 - \sssum{j = 0}{n} d_j \right) \sm (1)
  < \ep.
\]
So $\ph \otimes \af$ has the \nptrpw~$T (A).$

Since $Z \otimes A$ has tracial rank zero (being isomorphic to $A$),
Proposition~\ref{P:NPImpTRP} implies that
$\ph \otimes \af$ has the \trp.
Therefore so does $\gm^{-1} \circ (\ph \otimes \af) \circ \gm.$
So $\bt_n$ has the \trp\  by Lemma~\ref{L:TRMForInnPtb},
as desired.

Now let $a \in A.$
Then
\[
\bt_n (a)
 = \gm^{-1} \big( u_n \cdot (\ph \otimes \af)
    \big( u_n^* \gm (a) u_n \big) \cdot u_n^* \big).
\]
So
\begin{align*}
\| \bt_n (a) - \af (a) \|
 & \leq \| u_n^* \gm (a) u_n - 1 \otimes a \big\|
           + \big\| \gm^{-1} \big( u_n \cdot (\ph \otimes \af)
                            (1 \otimes a) \cdot u_n^* \big)
                   - \af (a) \big\|
          \\
 & = \| \gm (a) - u_n (1 \otimes a) u_n^* \|
       + \| u_n (1 \otimes \af (a)) u_n^* - \gm (\af (a)) \|.
\end{align*}
This expression converges to zero as $n \to \I.$
Substituting the definition of~$\bt_n,$ we get
\[
\limi{n} \big\|
 \big( \Ad (v_n) \circ \gm^{-1} \circ (\ph \otimes \af) \circ \gm \big) (a)
      - \af (a) \big\|
   = 0.
\]
Since $a \in A$ is arbitrary,
we have shown that
$\gm^{-1} \circ (\ph \otimes \af) \circ \gm$ is
approximately unitarily equivalent to $\af.$
It follows that $\bt_n$ is
approximately unitarily equivalent to $\af$ for all $n.$
This is part~(\ref{Dense:AUE}) for~$\bt_n.$

Now choose $n \in \N$ so large that
$\| u_n (1 \otimes a) u_n^* - \gm (a) \| < \frac{1}{2} \ep$
for all $a \in F \cup \af (F),$
and set $\bt = \bt_n.$
The estimate in the previous paragraph shows that
part~(\ref{Dense:App}) of the conclusion holds,
completing the proof.
\end{proof}

\begin{thm}\label{Main}
Let $A$ be a simple separable unital \ca\  with tracial
rank zero and such that $Z \otimes A \cong A.$
Then there is a dense $G_{\dt}$-set $G \S \Aut (A)$
such that every $\af \in G$ has the \trp.
\end{thm}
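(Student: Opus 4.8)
The plan is to take $G$ to be the set of \emph{all} $\af \in \Aut(A)$ with the \trp, and to show that this $G$ is simultaneously $G_{\dt}$ and dense in the topology of pointwise norm convergence; every $\af \in G$ then has the \trp\ by construction. The substantive work has already been carried out in Proposition~\ref{P:TRPDense}, so the theorem is essentially a matter of assembling the preceding lemmas.

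For the $G_{\dt}$ half I would first observe that $A$ has real rank zero, which is a consequence of tracial rank zero (Theorem~3.4 of~\cite{LnTAF}); this is exactly the standing hypothesis in Lemma~\ref{Intersect}. That lemma expresses the set of automorphisms with the \trp\ as a countable intersection of sets of the form $W(F, \ep, n, x)$, and Lemma~\ref{Open} shows that each individual $W(F, \ep, n, x)$ is open in $\Aut(A)$. A countable intersection of open sets is $G_{\dt}$, so $G$ is $G_{\dt}$.

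For density I would work in the topology of pointwise norm convergence. By Lemma~\ref{L:AutTop} this is precisely the topology of the complete metric $\rh_S$, and for it the sets $U(F, \ep) = \{ \bt \in \Aut(A) \colon \| \bt(a) - \af(a) \| < \ep \ \mbox{for all} \ a \in F \}$, indexed by finite $F \S A$ and $\ep > 0$, form a neighborhood base at each $\af$. Given an arbitrary such $U(F, \ep)$, parts~(\ref{Dense:TRP}) and~(\ref{Dense:App}) of Proposition~\ref{P:TRPDense} supply a $\bt \in \Aut(A)$ that has the \trp\ and satisfies $\| \bt(a) - \af(a) \| < \ep$ for all $a \in F$; thus $\bt \in G \cap U(F, \ep)$. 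Since the $U(F, \ep)$ form a base, $G$ meets every nonempty open set and is therefore dense.

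The delicate point is conceptual rather than computational: one must be certain that testing density only against the finite-set neighborhoods $U(F, \ep)$ really suffices, even though the complete metric $\rh_S$ also measures the discrepancy between the inverses $\af^{-1}$ and $\bt^{-1}$. This is guaranteed by Lemma~\ref{L:AutTop}, whose proof shows that pointwise norm convergence automatically forces pointwise convergence of the inverses, so that $\rh_S$ and $\rh_S^{(0)}$ induce the same topology and the $U(F, \ep)$ genuinely form a base. All the real difficulty sits in Proposition~\ref{P:TRPDense} (resting in turn on the shift construction and on Corollary~\ref{C:Z}), which I am assuming; granting it, the present theorem is just the packaging of a dense $G_{\dt}$ set inside the complete --- hence Baire --- space $(\Aut(A), \rh_S)$.
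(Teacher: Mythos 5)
Your proof is correct and follows exactly the paper's argument: take $G$ to be the set of all automorphisms with the \trp, deduce that it is $G_{\dt}$ from Lemmas \ref{Intersect} and~\ref{Open} (using that tracial rank zero gives real rank zero), and deduce density from Proposition~\ref{P:TRPDense}. Your extra remark that Lemma~\ref{L:AutTop} ensures the finite-set neighborhoods $U(F,\ep)$ form a base for the topology of the complete metric $\rh_S$ is a correct and worthwhile clarification of a point the paper leaves implicit.
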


\begin{proof}
Let
\[
G = \{ \af \in \Aut (A) \colon {\mbox{$\af$ has the \trp}} \}.
\]
It follows from Lemmas \ref{Intersect} and~\ref{Open}
that $G$ is a $G_{\dt}$-set,
and from Proposition~\ref{P:TRPDense} that $G$ is dense.
\end{proof}

\section{C*-algebras with tracial rank zero which are not
 $Z$-stable}\label{Sec:Old}

It seems to be unknown whether every simple separable
unital, but not necessarily nuclear, \ca~$A$
with tracial rank zero
satisfies $Z \otimes A \cong A,$
even if one also assumes that
$A$ satisfies the Universal Coefficient Theorem.
Even if this is not true,
we can still show that the automorphisms with the \trp\  are
generic among all approximately inner automorphisms.
The obstruction to showing that they are
generic among all automorphisms
is that, as far as we know,
it might be possible for the approximate unitary equivalence class
of some automorphism to contain no automorphism at all
with the \trp.

The proof uses a much weaker condition,
namely tracial approximate divisibility.
Writing the proof in terms of tracial approximate divisibility
makes the ideas clearer and the formulas simpler.
We therefore define tracial approximate divisibility
and develop enough of its basic theory to use it in the proof.
There are no surprises, so the reader not interested
in the details could skip from Definition~\ref{D-2902TrAppDiv}
directly to Notation~\ref{N-2907Inn}.
(In fact, our proof, in Proposition~\ref{P-2907TRZTAD},
that tracial rank zero implies tracial approximate divisibility
also implies the conclusion of Corollary~\ref{C-2907-210}.)
We do take care to give a definition of tracial approximate divisibility
which seems likely to be appropriate for use with infinite \ca s.

\begin{dfn}[Definition~1.1 of~\cite{BKR}]\label{D-2902cncfd}
A finite dimensional C*-algebra is
{\emph{completely noncommutative}}
if it has no commutative direct summands,
equivalently, no abelian central projections.
\end{dfn}

The following is a modification of Definition~1.1 of~\cite{BKR},
following the pattern used
for the definition of tracial rank zero
(Definition~2.1 of~\cite{LnTAF}).
This concept, with a formally stronger definition
(requiring arbitrarily large matrix sizes
and a single summand)
has been considered independently in
unpublished work of Ilan Hirshberg,
who showed that such algebras have stable rank one.

\begin{dfn}\label{D-2902TrAppDiv}
Let $A$ be a simple separable infinite dimensional unital C*-algebra.
We say that $A$ is {\emph{tracially approximately divisible}}
if for every $\ep > 0,$ every $n \in \N,$
every $a_1, a_2, \ldots, a_n \in A,$
and every $y \in A_{+}$ with $\| y \| = 1,$
there exist a \pj~$e \in A,$
a completely noncommutative finite dimensional C*-algebra~$D,$
and an injective unital \hm\  $\ph \colon D \to e A e,$
such that:
\begin{enumerate}
\item\label{D-2902TrAppDiv-AC}
$\| \ph (b) a_k - a_k \ph (b) \| < \ep$ for $k = 1, 2, \ldots, n$
and all $b \in D$ with $\| b \| \leq 1.$
\item\label{D-2902TrAppDiv-Sm}
$1 - e$ is \mvnt\  to a \pj\  in ${\overline{y A y}}.$
\item\label{D-2902TrAppDiv-Norm}
$\| e y e \| > 1 - \ep.$
\end{enumerate}
\end{dfn}

The following version is immediately seen to be equivalent
and is sometimes technically more convenient.
(Note that we do not require the \hm~$\ph$ to be injective.
However, as long as $\ep < 1,$
we must have $\ph (0, 1) \neq 0.$)

\begin{rmk}\label{R-2902UnitalTAD}
Let $A$ be a simple separable infinite dimensional unital C*-algebra.
Then $A$ is tracially approximately divisible
\ifo\  for every $\ep > 0,$ every $n \in \N,$
every $a_1, a_2, \ldots, a_n \in A,$
and every $y \in A_{+}$ with $\| y \| = 1,$
there exists
a completely noncommutative finite dimensional C*-algebra~$D$
and a unital \hm\  $\ph \colon \C \oplus D \to A$
such that:
\begin{enumerate}
\item\label{D-2902UnitalTAD-AC}
$\| \ph (b) a_k - a_k \ph (b) \| < \ep$ for $k = 1, 2, \ldots, n$
and all $b \in \C \oplus D$ with $\| b \| \leq 1.$
\item\label{D-2902UnitalTAD-Sm}
$\ph (1, 0)$ is \mvnt\  to a \pj\  in ${\overline{y A y}}.$
\item\label{D-2902UnitalTAD-Norm}
$\| \ph (0, 1) y \ph (0, 1) \| > 1 - \ep.$
\end{enumerate}
\end{rmk}

\begin{rmk}\label{R-2902NotN1}
In both Definition~\ref{D-2902TrAppDiv}
and Remark~\ref{R-2902UnitalTAD},
we can replace the requirement that $\| y \| = 1$
with the requirement $y \neq 0,$
and the last condition with
$\| e y e \| > \| y \| - \ep$ (in Definition~\ref{D-2902TrAppDiv})
or $\| \ph (0, 1) y \ph (0, 1) \| > \| y \| - \ep$
(in Remark~\ref{R-2902UnitalTAD}).
\end{rmk}

\begin{lem}\label{L-2902StdCncfd}
In Definition~\ref{D-2902TrAppDiv},
Remark~\ref{R-2902UnitalTAD},
and Remark~\ref{R-2902NotN1},
we may restrict $D$ to being one of the three
standard completely noncommutative finite dimensional C*-algebras
$M_2,$ $M_3,$ and $M_2 \oplus M_3$ of Definition~2.6 of~\cite{BKR}.
\end{lem}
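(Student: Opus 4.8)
The plan is to reduce the equivalence of the restricted and unrestricted versions of the definition to a single structural fact: every completely noncommutative finite dimensional C*-algebra $D$ admits a unital injective homomorphism $\psi \colon D_0 \to D$, where $D_0$ is one of $M_2$, $M_3$, $M_2 \oplus M_3$. Granting this, the reduction is immediate. Restricting $D$ to the standard three is trivially a special case of the general condition, since these algebras are themselves completely noncommutative. Conversely, given data $\varphi \colon D \to eAe$ witnessing Definition~\ref{D-2902TrAppDiv}, I would set $\varphi_0 = \varphi \circ \psi \colon D_0 \to eAe$. This is again an injective unital homomorphism with the same projection $e$, and since $\psi$ is isometric, every $b \in D_0$ with $\|b\| \le 1$ has $\|\psi(b)\| \le 1$; hence condition~(\ref{D-2902TrAppDiv-AC}) passes to $\varphi_0$, while conditions~(\ref{D-2902TrAppDiv-Sm}) and~(\ref{D-2902TrAppDiv-Norm}) do not involve $\varphi$ and are unchanged. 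The identical substitution, using $\psi$ (resp. $\id_{\C} \oplus \psi$), handles Remark~\ref{R-2902UnitalTAD} and Remark~\ref{R-2902NotN1} verbatim; the only point to note is that $\varphi(1,0)$ and $\varphi(0,1)$ are preserved because $\psi(1_{D_0}) = 1_D$.

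The real content is therefore the embedding fact, which I would prove by combining the arithmetic of unital homomorphisms between finite dimensional C*-algebras with an elementary number-theoretic observation. Write $D = \bigoplus_{i=1}^k M_{n_i}$ with every $n_i \ge 2$. A unital homomorphism $M_2 \to M_{n_i}$ exists iff $2 \mid n_i$, a unital homomorphism $M_3 \to M_{n_i}$ exists iff $3 \mid n_i$, and a unital homomorphism $M_2 \oplus M_3 \to M_{n_i}$ is exactly a choice of multiplicities $(a_i, b_i) \in \Nz^2$ with $2a_i + 3b_i = n_i$, a pair which exists for every $n_i \ge 2$. Assembling such maps componentwise, the resulting unital homomorphism $M_2 \oplus M_3 \to D$ has kernel $\bigcap_i \ker \psi_i$, and a short computation shows it is injective iff $\sum_i a_i \ge 1$ and $\sum_i b_i \ge 1$.

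I would then run a brief case analysis to produce one of the three embeddings. If some $n_i \ge 5$ with $n_i \ne 6$, that single summand already admits $(a_i,b_i)$ with both entries positive, so $M_2 \oplus M_3$ embeds (with arbitrary valid multiplicities on the other blocks). Otherwise every $n_i \in \{2,3,4,6\}$: if all $n_i$ are even, embed $M_2$; if all are divisible by $3$, embed $M_3$; and in the only remaining case there is both a summand equal to $M_3$ (the one odd value available, supplying a block with $b \ge 1$) and a summand with $n_j \in \{2,4\}$ (supplying a block with $a \ge 1$), so $M_2 \oplus M_3$ embeds by letting these two blocks carry its two summands.

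The main obstacle is this last step. It is tempting to hope that a single matrix summand always absorbs $M_2 \oplus M_3$, but the small cases $M_2$, $M_3$, $M_4$, and especially $M_6$ show this fails, so one must exploit the direct-sum structure and spread the $M_2$ and $M_3$ parts across different blocks. Once one tracks which $n_i$ force $a_i = 0$ (only $n_i = 3$) or $b_i = 0$ (only $n_i \in \{2,4\}$), the case analysis closes cleanly, and I would conclude by noting that these three algebras are precisely the standard completely noncommutative ones of Definition~2.6 of~\cite{BKR}.
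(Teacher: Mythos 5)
Your proposal is correct and takes the same route the paper does: the paper's proof is simply a pointer to the argument for Proposition~2.7 of~\cite{BKR}, which establishes exactly your key structural fact (every completely noncommutative finite dimensional C*-algebra unitally and injectively contains one of $M_2,$ $M_3,$ $M_2 \oplus M_3$) via the same arithmetic of multiplicities $2a + 3b = n.$ Your writeup just makes explicit the precomposition step and the case analysis that the paper delegates to the citation.
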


\begin{proof}
See the proof (given before the proposition)
of Proposition~2.7 of~\cite{BKR}.
\end{proof}

In fact, we can almost certainly restrict to $D = M_2.$
See Remark~\ref{R-2914StComp} below.

\begin{prp}\label{P-2902SP}
Let $A$ be a simple separable infinite dimensional unital C*-algebra
which is tracially approximately divisible.
Then $A$ has Property~(SP).
\end{prp}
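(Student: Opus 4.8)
The plan is to prove the equivalent statement that $\overline{yAy}$ contains a nonzero \pj\ for every nonzero $y \in A_{+}$; after rescaling I may assume $\|y\| = 1$. The first move is to feed $y$ into tracial approximate divisibility. Concretely, fix a small $\ep \in (0,1)$ and apply Definition~\ref{D-2902TrAppDiv} with $n = 1$, $a_1 = y$, and with $y$ itself as the distinguished positive element, using Lemma~\ref{L-2902StdCncfd} to arrange that $D$ is one of $M_2$, $M_3$, $M_2 \oplus M_3$. This produces a \pj\ $e$, a completely noncommutative \fd\ C*-algebra $D$, and a unital \hm\ $\ph \colon D \to eAe$ whose image approximately commutes with $y$, together with the two auxiliary facts that $1 - e$ is \mvnt\ to a \pj\ in $\overline{yAy}$ (condition~(\ref{D-2902TrAppDiv-Sm})) and that $\|eye\| > 1 - \ep$ (condition~(\ref{D-2902TrAppDiv-Norm})).

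The core of the argument is then a dichotomy on the complementary \pj\ $1 - e$. If $1 - e \neq 0$, then condition~(\ref{D-2902TrAppDiv-Sm}) finishes everything at once: $1 - e$ is \mvnt\ to a \pj\ $q \in \overline{yAy}$, and $q \neq 0$ because $1 - e \neq 0$. The point I would stress is that this branch uses nothing about the spectrum of $y$: the divisibility hands us an honest \pj\ for free, and a single Murray--von Neumann equivalence deposits it inside the \hsa. This is exactly why condition~(\ref{D-2902TrAppDiv-Sm}) is the real engine, and why the hypothesis is phrased as a subequivalence into $\overline{yAy}$ rather than in terms of a spectral gap.

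The main obstacle is the degenerate branch $e = 1$, in which condition~(\ref{D-2902TrAppDiv-Sm}) says nothing. Here $\ph$ embeds $D$ unitally, so that, relative to $y$, the algebra $A$ looks approximately divisible in the sense of~\cite{BKR}. I expect this to be the delicate step, since one cannot manufacture a \pj\ in $\overline{yAy}$ merely by cutting $y$ with the matrix units of $\ph(D)$: any element $y^{1/2} b y^{1/2} \in \overline{yAy}$ inherits the spectrum of $y$, which may be connected and reach $0$, and hence has no spectral gap. My intended attack is to use the matrix units $e_{ij}$ of a block $M_k \subseteq \ph(D)$ with $k \ge 2$; taking $D = M_2$ for definiteness, the elements $\tilde c_1 = y^{1/2} e_{11} y^{1/2}$ and $\tilde c_2 = y^{1/2} e_{22} y^{1/2}$ lie in $\overline{yAy}$, are approximately orthogonal and mutually Cuntz equivalent (via $y^{1/2} e_{12} y^{1/2}$), and sum to $y$, so that $y$ is approximately halved inside its own \hsa. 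Iterating this, together with the finely divisible relative dimensions available from the $M_2$ and $M_3$ summands, I would either localize a genuine \pj\ by adapting the structure theory of approximately divisible C*-algebras, or, more cleanly, re-apply Definition~\ref{D-2902TrAppDiv} to one of the $\tilde c_i$ with the matrix units adjoined to the test set, aiming to force the complementary \pj\ to be nonzero and so return to the first branch. Showing that this degenerate case cannot persist is where I expect the real work to lie.
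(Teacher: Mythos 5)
Your first branch is fine: if $1 - e \neq 0$, condition~(\ref{D-2902TrAppDiv-Sm}) immediately deposits a nonzero projection in $\overline{y A y}$, exactly as in the paper. The genuine gap is the branch $e = 1$, which you explicitly leave open, and the ideas you sketch for it will not close it. Halving $y$ inside $\overline{y A y}$ via $y^{1/2} e_{i i} y^{1/2}$ and iterating cannot by itself produce a projection: if $y$ has connected spectrum reaching $0$, every such cut-down again has connected spectrum reaching $0$, and no amount of subdivision creates a spectral gap. More importantly, your hope of ``forcing the complementary projection to be nonzero'' by re-applying Definition~\ref{D-2902TrAppDiv} with enlarged test sets is misplaced: nothing in the definition prevents the answer from being $e = 1$ every single time, and that is precisely what happens when $A$ is approximately divisible in the sense of~\cite{BKR}. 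So the degenerate case can persist, and a strategy built on ruling it out is doomed.

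The paper resolves this by making the dichotomy global rather than local. Either for \emph{every} choice of $(\ep, n, a_1, \ldots, a_n)$ the definition can be realized with $e = 1$; then $A$ is approximately divisible in the sense of Definition~1.1 of~\cite{BKR}, and Theorem~1.3(b) of~\cite{BKR} already yields Property~(SP) --- this is exactly the input you would otherwise have to reprove from scratch. Or else there is some witness tuple $(\ep, n, a_1, \ldots, a_n)$ for which no unital approximately commuting $\ps \colon D \to A$ exists; feeding \emph{that} tuple (rather than $\{ y \}$) together with your $y$ into Definition~\ref{D-2902TrAppDiv} forces $e \neq 1$, since $e = 1$ would produce exactly the forbidden unital homomorphism, and you land back in your first branch. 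The missing idea, then, is to choose the test data according to whether $A$ is approximately divisible, and to quote the Blackadar--Kumjian--R{\o}rdam theorem in the divisible case instead of attempting to manufacture a projection in $\overline{y A y}$ by hand.
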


\begin{proof}
Let $y \in A_{+} \SM \{ 0 \}.$
We show that ${\overline{y A y}}$ contains a nonzero \pj.
\Wolog\  $\| y \| = 1.$

Suppose that for every $\ep > 0,$ every $n \in \N,$
and every $a_1, a_2, \ldots, a_n \in A,$
there exists
a completely noncommutative finite dimensional C*-algebra~$D$
and an injective unital \hm\  $\ps \colon D \to A$
such that
$\| \ps (b) a_k - a_k \ps (b) \| < \ep$ for $k = 1, 2, \ldots, n$
and all $b \in D$ with $\| b \| \leq 1.$
Then $A$ is approximately divisible,
so Theorem 1.3(b) of~\cite{BKR} implies that
$A$ has Property~(SP).
In particular,
${\overline{y A y}}$ contains a nonzero \pj.

Otherwise,
there are $\ep > 0,$ $n \in \N,$
and $a_1, a_2, \ldots, a_n \in A$
such that no \hm~$\ps$ as above exists.
By definition,
there nevertheless exist a \pj~$e \in A,$
a completely noncommutative finite dimensional C*-algebra~$D,$
and an injective unital \hm\  $\ph \colon D \to e A e,$
such that
$\| \ph (b) a_k - a_k \ph (b) \| < \ep$ for $k = 1, 2, \ldots, n$
and all $b \in D$ with $\| b \| \leq 1,$
and such that
$1 - e$ is \mvnt\  to a \pj\  $f \in {\overline{y A y}}.$
Clearly $e \neq 1,$
so $f \neq 0.$
Thus in this case also ${\overline{y A y}}$ contains a nonzero \pj.
\end{proof}

When $A$ is finite,
we can omit condition~(\ref{D-2902TrAppDiv-Norm})
in Definition~\ref{D-2902TrAppDiv}.

\begin{lem}\label{L-2907FinNoy}
Let $A$ be a finite simple separable infinite dimensional
unital C*-algebra.
Assume that for every $\ep > 0,$ every $n \in \N,$
every $a_1, a_2, \ldots, a_n \in A,$
and every $y \in A_{+} \SM \{ 0 \},$
there exist a \pj~$e \in A,$
a completely noncommutative finite dimensional C*-algebra~$D,$
and an injective unital \hm\  $\ph \colon D \to e A e,$
such that:
\begin{enumerate}
\item\label{D-2907FinNoy-AC}
$\| \ph (b) a_k - a_k \ph (b) \| < \ep$ for $k = 1, 2, \ldots, n$
and all $b \in D$ with $\| b \| \leq 1.$
\item\label{D-2907FinNoy-Sm}
$1 - e$ is \mvnt\  to a \pj\  in ${\overline{y A y}}.$
\end{enumerate}
Then $A$ is tracially approximately divisible.
\end{lem}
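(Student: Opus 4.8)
The plan is to deduce condition~(\ref{D-2902TrAppDiv-Norm}) of Definition~\ref{D-2902TrAppDiv} from conditions (\ref{D-2907FinNoy-AC}) and~(\ref{D-2907FinNoy-Sm}), invoking finiteness only at the very end. So fix $\ep > 0,$ $n \in \N,$ elements $a_1, \ldots, a_n \in A,$ and $y \in A_+$ with $\| y \| = 1.$ Conditions (\ref{D-2907FinNoy-AC}) and~(\ref{D-2907FinNoy-Sm}) are exactly conditions (\ref{D-2902TrAppDiv-AC}) and~(\ref{D-2902TrAppDiv-Sm}) of Definition~\ref{D-2902TrAppDiv}, so it remains only to arrange in addition that $\| e y e \| > 1 - \ep.$ First I would observe that the proof of Proposition~\ref{P-2902SP} uses only conditions (\ref{D-2902TrAppDiv-AC}) and~(\ref{D-2902TrAppDiv-Sm}), and hence applies verbatim under the present hypotheses; thus $A$ has Property~(SP).

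Next I would isolate the top of the spectrum of~$y.$ Choose a \cfn\  $g \colon [0, 1] \to [0, 1]$ with $g = 0$ on $[0, \, 1 - \tfrac{1}{2} \ep]$ and $g (1) = 1.$ Since $1 \in \spec (y),$ we have $g (y) \neq 0,$ and $g (y) \in {\overline{y A y}}$ because $g (0) = 0.$ Using Property~(SP), choose a \nzp\  $q_0 \in {\overline{g (y) A g (y)}},$ and then, using Property~(SP) again inside the simple unital infinite dimensional corner $q_0 A q_0,$ choose a \nzp\  $q_1 < q_0$ with $q_1 \neq q_0.$ Note that $q_0, q_1 \in {\overline{y A y}}.$

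Now I would apply the hypothesis with $a_1, \ldots, a_n, \, g (y)$ in place of $a_1, \ldots, a_n,$ with a tolerance $\eta > 0$ (to be chosen small, with $\eta \leq \ep,$ depending on $g$ and~$\ep$) in place of~$\ep,$ with $n$ unchanged, and with $q_1$ in place of~$y.$ This produces a \pj~$e,$ a completely noncommutative \fd\  \ca~$D,$ and an injective unital \hm\  $\ph \colon D \to e A e$ satisfying condition~(\ref{D-2907FinNoy-AC}) for the enlarged set and tolerance~$\eta,$ and with $1 - e$ \mvnt\  to a \pj\  in ${\overline{q_1 A q_1}},$ so that $1 - e \precsim q_1.$ In particular $e = \ph (1_D)$ satisfies $\| e g (y) - g (y) e \| < \eta.$ Conditions (\ref{D-2902TrAppDiv-AC}) and~(\ref{D-2902TrAppDiv-Sm}) for the original data hold, the latter because $q_1 \in {\overline{y A y}}.$

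It remains to verify $\| e y e \| > 1 - \ep,$ and this is the step where finiteness enters and which I expect to be the main obstacle. Suppose instead that $\| e y e \| \leq 1 - \ep.$ If $e$ commuted exactly with~$y,$ then $\spec (e y e) \S [0, \, 1 - \ep]$ in the corner $e A e,$ so $g (e y e) = 0$ since $g$ vanishes on $[0, \, 1 - \ep];$ as $e g (y) e = g (e y e)$ for commuting $e$ and~$y,$ this would give $g (y) = (1 - e) g (y) (1 - e),$ whence $q_0 \leq 1 - e$ and so $q_0 \precsim 1 - e \precsim q_1 < q_0.$ Because $A$ is finite, $q_0$ is a finite \pj, so $q_0 \precsim q_1 \leq q_0$ forces $q_1 = q_0,$ contradicting $q_1 \neq q_0.$ The real work is to run this argument with only approximate commutation: for $\eta$ small (controlled via continuity of functional calculus for the fixed function~$g$) the quantity $\| e g (y) e \|$ is small and $g (y)$ lies within small distance of the corner $(1 - e) A (1 - e),$ so that $\| q_0 - (1 - e) q_0 (1 - e) \|$ is small; a standard perturbation estimate for a \pj\  close to a \hsa\  then yields $q_0 \precsim 1 - e,$ and the same contradiction follows. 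Choosing $\eta$ small enough at the outset to make all these estimates valid completes the proof.
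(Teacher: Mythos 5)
Your argument is, in substance, the one the paper itself invokes: the paper's proof is a one-line reference to the Property~(SP) case of Lemma~1.16 of~\cite{PhT1}, and that argument has exactly your skeleton --- note that the proof of Proposition~\ref{P-2902SP} uses only conditions (\ref{D-2907FinNoy-AC}) and~(\ref{D-2907FinNoy-Sm}) and so yields Property~(SP) here; cut down near the top of the spectrum of~$y$ to get a \nzp\  $q_0 \in {\overline{g (y) A g (y)}}$ and a proper nonzero sub\pj\  $q_1 \lneq q_0$; apply the hypothesis with $q_1$ in place of~$y$; and, if the norm condition failed, derive $q_0 \precsim 1 - e \precsim q_1 \lneq q_0,$ contradicting the fact that in a finite simple unital \ca\  every \pj\  is finite. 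So the route is the same as the cited one, and the overall structure is sound.

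One concrete point in your sketch does not work as written. In the approximate-commutation step you control only $\| [e, g (y)] \| < \eta$ (you put $g (y),$ not $y,$ into the enlarged finite set), but you then appeal to ``continuity of functional calculus for the fixed function~$g$'' to conclude that $\| e g (y) e \|$ is small. The comparison you need, $e g (y) e \approx g (e y e),$ is a statement about $\| [e, y] \|$: it is proved by approximating $g$ uniformly by polynomials and commuting $e$ past powers of~$y,$ and smallness of $\| [e, g (y)] \|$ alone gives no control over it. The fix is easy: put $y$ itself (or the \pj~$q_0$) into the finite set. With $\| [e, y] \| < \eta$ for $\eta$ small, one gets $\| e g (y) e - g (e y e) \|$ small, where $g (e y e)$ is computed in $e A e$; since $\spec (e y e) \S [0, \, 1 - \ep]$ under your contradiction hypothesis and $g$ vanishes there, $g (e y e) = 0,$ so $\| e g (y) \|^2 \leq \| e g (y) e \|$ is small, $q_0$ is close to $(1 - e) q_0 (1 - e),$ and the standard perturbation argument gives $q_0 \precsim 1 - e$ as you intended. (Alternatively, with $q_0$ in the finite set one can avoid functional calculus entirely, comparing $q_0 e y e q_0 \leq (1 - \ep) q_0 e q_0$ with $q_0 y q_0 \geq (1 - \tfrac{3}{4} \ep) q_0.$) With that adjustment the proof closes.
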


\begin{proof}
Since $A$ has Property~(SP) (Proposition~\ref{P-2902SP}),
the proof is the same as the Property~(SP) case of
the proof of Lemma 1.16 of~\cite{PhT1}.
\end{proof}

We get the following analog of Lemma~2.8 of~\cite{BKR}.

\begin{lem}\label{L-2902BKR28}
Let $A$ be a simple separable infinite dimensional unital C*-algebra
which is tracially approximately divisible,
let $\ep > 0,$ let $n \in \N,$
let $a_1, a_2, \ldots, a_n \in A,$
and let $y \in A_{+} \SM \{ 0 \}.$
Let $C \S A$ be a finite dimensional subalgebra of~$A.$
Then there exists
a standard completely noncommutative finite dimensional C*-algebra~$D$
(as in Lemma~\ref{L-2902StdCncfd})
and a unital \hm\  $\ph \colon \C \oplus D \to A$
such that:
\begin{enumerate}
\item\label{L-2902BKR28-AC}
$\| \ph (b) a_k - a_k \ph (b) \| < \ep$ for $k = 1, 2, \ldots, n$
and all $b \in \C \oplus D$ with $\| b \| \leq 1.$
\item\label{L-2902BKR28-ExC}
$\ph (b) c = c \ph (b)$ for all $c \in C$ and
$b \in \C \oplus D.$
\item\label{L-2902BKR28-Sm}
$\ph (1, 0)$ is \mvnt\  to a \pj\  in ${\overline{y A y}}.$
\item\label{L-2902BKR28-Norm}
$\| \ph (0, 1) y \ph (0, 1) \| > \| y \| - \ep.$
\end{enumerate}
\end{lem}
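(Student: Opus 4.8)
Lemma~\ref{L-2902BKR28} asks us to produce, inside a tracially approximately divisible algebra~$A$, a unital homomorphism $\ph \colon \C \oplus D \to A$ whose ``divisible part'' $D$ not only nearly commutes with a prescribed finite set $a_1, \ldots, a_n$ and has a small complementary projection $\ph(1,0)$ (conditions (\ref{L-2902BKR28-AC}), (\ref{L-2902BKR28-Sm}), (\ref{L-2902BKR28-Norm})), but also \emph{exactly} commutes with a given finite-dimensional subalgebra $C \S A$ (condition (\ref{L-2902BKR28-ExC})).

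**Approach.** The plan is to reduce exact commutation with $C$ to approximate commutation, which is already supplied by Remark~\ref{R-2902UnitalTAD} together with Lemma~\ref{L-2902StdCncfd}. The standard device (this is exactly the role Lemma~2.8 of~\cite{BKR} plays in the approximately divisible setting) is to choose a system of matrix units for the finite dimensional algebra $C$, enlarge the finite set of test elements $a_1, \ldots, a_n$ to include these matrix units, and then perturb the resulting nearly-central homomorphism to one that commutes with $C$ on the nose. Concretely, first I would fix matrix units $\{ f_{ij}^{(s)} \}$ for the simple summands of $C$, and apply Remark~\ref{R-2902UnitalTAD} and Lemma~\ref{L-2902StdCncfd} to the enlarged finite set $\{ a_1, \ldots, a_n \} \cup \{ f_{ij}^{(s)} \}$, with a tolerance $\ep'$ much smaller than the given $\ep$, and with the same $y$. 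This produces a standard completely noncommutative $D$ and a unital homomorphism $\ps \colon \C \oplus D \to A$ with image nearly commuting with all of these elements, and with $\ps(1,0)$ small and $\|\ps(0,1)\, y\, \ps(0,1)\| > \|y\| - \ep'$.

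**The commutation step.** The heart of the proof is to replace $\ps$ by a genuinely $C$-commuting $\ph$. Since $\ps(D)$ nearly commutes with the matrix units of $C$, a standard averaging (or near-inclusion/perturbation) argument applies: compress $\ps(0,1)$ into the relative commutant $C' \cap A$ using the conditional-expectation-type formula $b \mapsto \sum_{s} \sum_{i} f_{i1}^{(s)} \, b \, f_{1i}^{(s)}$ (summing over simple summands $s$ and matrix indices $i$), which for any $b$ nearly commuting with the $f_{ij}^{(s)}$ produces an element close to $b$ that exactly commutes with $C$. Applying this to the generators of $\ps(D)$, one obtains an element of $C' \cap A$ close to a system of matrix units for $D$; a further small perturbation (using that near-matrix-units are close to genuine matrix units in a C*-algebra, with a constant depending only on the matrix dimension, hence controlled since $D \in \{M_2, M_3, M_2 \oplus M_3\}$) yields a genuine unital copy $D \hookrightarrow C' \cap A$, and thus the desired $\ph$. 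One must check that this perturbation moves $\ph(1,0)$ and $\ph(0,1)$ only slightly, so that conditions (\ref{L-2902BKR28-Sm}) and (\ref{L-2902BKR28-Norm}) survive with the original tolerance $\ep$; Murray-von Neumann equivalence of the small projection is stable under small norm perturbations (two projections within norm $<1$ are equivalent), and the norm estimate in (\ref{L-2902BKR28-Norm}) degrades continuously.

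**Main obstacle.** The delicate part is the bookkeeping of tolerances: I must choose $\ep'$ (and the strength of near-commutation with the matrix units of $C$) so that, \emph{after} the two perturbations—first compressing into $C' \cap A$, then correcting near-matrix-units to exact matrix units—all four conclusions hold with the \emph{given} $\ep$. In particular the correction constant for near-matrix-units depends on the (bounded) dimension of $D$, so one should fix $D$ among the three standard algebras of Lemma~\ref{L-2902StdCncfd} before quantifying $\ep'$. None of these estimates is conceptually hard—each is a routine ``$\ep/3$'' or near-inclusion argument—but assembling them so that exact commutation with $C$ is achieved without destroying the smallness of $\ph(1,0)$ or the norm lower bound for $\ph(0,1)$ is where the real care lies. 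This mirrors the corresponding step in the proof of Lemma~2.8 of~\cite{BKR}, and I expect the argument to follow that template closely.
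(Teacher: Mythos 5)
Your proposal is correct and follows essentially the same route as the paper: apply tracial approximate divisibility (via Remark~\ref{R-2902UnitalTAD} and Lemma~\ref{L-2902StdCncfd}) to the finite set enlarged by a system of matrix units for~$C$ with a much finer tolerance, then run the perturbation argument of Lemma~2.8 of~\cite{BKR} to achieve exact commutation with~$C$, and finally observe that the resulting $\ph(1,0)$ and $\ph(0,1)$ are norm-close to the original ones, so that Murray--von Neumann equivalence and the norm lower bound survive. The only point the paper makes that you gloss over is that the perturbation must be applied to all of $\C \oplus D$ rather than to $D$ alone, which changes the bookkeeping constants (six standard algebras of dimension at most $14$ instead of three of dimension at most $13$), but this does not affect the validity of your outline.
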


\begin{proof}
The proof is similar to that of Lemma~2.8 of~\cite{BKR},
but there are extra steps.
Instead of the
three standard completely noncommutative finite dimensional C*-algebras
considered in~\cite{BKR},
there are now six algebras to consider,
namely,
$M_2,$ $M_3,$ $M_2 \oplus M_3,$
$\C \oplus M_2,$ $\C \oplus M_3,$ and $\C \oplus M_2 \oplus M_3.$

\Wolog\  $\ep < 1.$
We set
\[
\ep_1 = \min \left( \frac{\ep}{84 \cdot \max
       \big( \| a_1 \|, \, \| a_2 \|, \, \ldots, \| a_n \| \big)},
     \, \frac{\ep}{3} \right).
\]
We use $84$ rather than $78$
(also $14$ rather than $13,$
and $42$ in place of $39$ in the definition of~$\dt$),
since the largest possible dimension of an image of $\C \oplus D$
is now $14$ rather than~$13.$
We follow the proof of Lemma~2.8 of~\cite{BKR},
starting by finding
a standard completely noncommutative finite dimensional C*-algebra~$D$
and a unital \hm\  $\ph_0 \colon \C \oplus D \to A$
satisfying~(\ref{L-2902BKR28-AC})
with
\[
\dt = \min
 \left( \frac{\ep}{42}, \, \frac{\ep_1}{d}, \, \frac{\ep_3}{d} \right)
\]
in place of~$\ep$
and with $\{ a_1, a_2, \ldots, a_n \}$ expanded to include a
system of matrix units for~$C,$
satisfying~(\ref{L-2902BKR28-Sm}),
satisfying~(\ref{L-2902BKR28-Norm}) with $\dt$ in place of~$\ep,$
and such that $\ph_0 |_{0 \oplus D}$ is injective.
If $\ph_0 (1, 0) = 0,$
then the argument proceeds as in the proof of Lemma~2.8 of~\cite{BKR}.
Otherwise,
proceed as in the proof of Lemma~2.8 of~\cite{BKR}
but using the algebra $\C \oplus D$ in place of $D$ there.
This yields a unital \hm\  $\ph \colon \C \oplus D \to A$
satisfying (\ref{L-2902BKR28-AC}) and~(\ref{L-2902BKR28-ExC}).
Inspection of the second half of the proof
shows that there is moreover a
system~$S$ of matrix units for $\C \oplus D$
such that
$\| \ph (e) - \ph_0 (e) \| < \min (\ep_1, \ep_3) + \ep_1$
for all $s \in S.$
It follows that
\[
\| \ph (1, 0) - \ph_0 (1, 0) \|
  < \min (\ep_1, \ep_3) + \ep_1
  \leq 2 \ep_1
  < 1.
\]
Therefore $\ph (1, 0)$ is \mvnt\  $\ph_0 (1, 0),$
and is thus also \mvnt\  to a \pj\  in ${\overline{y A y}}.$
Moreover,
\[
\| \ph (0, 1) - \ph_0 (0, 1) \|
 = \big\| [1 - \ph (1, 0)] - [1 - \ph_0 (1, 0)] \big\|
 = \| \ph (1, 0) - \ph_0 (1, 0) \|
 < 2 \ep_1.
\]
Therefore
\begin{align*}
\| \ph (0, 1) y \ph (0, 1) \|
& > \| \ph_0 (0, 1) y \ph_0 (0, 1) \|
               - 2 \| \ph (0, 1) - \ph_0 (0, 1) \|
           \\
& > \| y \| - \dt - 2 \ep_1
  \geq \| y \| - \ep,
\end{align*}
as required.
\end{proof}

The following proposition
is the useful substitute for Theorem 1.3(a) of~\cite{BKR}.

\begin{prp}\label{P-2902BKR-13a}
Let $A$ be a simple separable infinite dimensional unital C*-algebra
which is tracially approximately divisible,
let $\ep > 0,$
and let $y \in A_{+}$ satisfy $\| y \| = 1.$
Then there exists a strictly increasing sequence
$(A_m)_{m \in \Nz}$ of unital subalgebras of~$A,$
a sequence $(D_m)_{m \in \N}$
of completely noncommutative finite dimensional C*-algebras,
a sequence $(\ph_m)_{m \in \N}$
of unital \hm s $\ph_m \colon \C \oplus D_m \to A_{m},$
and a sequence $(p_m)_{m \in \N}$ of \pj s in~$A$
such that:
\begin{enumerate}
\item\label{P-2902BKR-13a-DU}
$A = {\ov{\bigcup_{m = 0}^{\I} A_m}}.$
\item\label{P-2902BKR-13a-Comm}
$\ph_m (b) a = a \ph_m (b)$ for all $m \in \N,$
all $a \in A_{m - 1},$ and all $b \in \C \oplus D_m.$
\item\label{P-2902BKR-13a-PjInAn}
$p_m \in A_{m}$ for all $m \in \N.$
\item\label{P-2902BKR-13a-pC}
$p_m \ph_k (b) = \ph_k (b) p_m$
for $m \in \N,$ for $k = 1, 2, \ldots, m,$
and for $b \in \C \oplus D_k.$
\item\label{P-2902BKR-13a-Dom}
$\ph_k (1, 0) \leq p_m$ for $m \in \N$ and $k = 1, 2, \ldots, m.$
\item\label{P-2902BKR-13a-Sm}
$p_m$ is \mvnt\  to a \pj\  in ${\overline{y A y}}.$
\item\label{P-2902BKR-13a-Norm}
$\| (1 - p_m) y (1 - p_m) \| > 1 - \ep$
for all $m \in \N.$
\end{enumerate}
\end{prp}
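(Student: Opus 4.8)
The plan is to build the four sequences by a single recursion, carrying conditions~(\ref{P-2902BKR-13a-DU})--(\ref{P-2902BKR-13a-Norm}) as an inductive invariant and using Lemma~\ref{L-2902BKR28} as the sole structural input. Fix an enumeration $(x_j)_{j \in \N}$ of a countable dense subset of $A$ and numbers $\dt_1, \dt_2, \ldots > 0$ with $\sum_{m} \dt_m < \ep.$ Set $A_0 = \C \cdot 1_A$ and $p_0 = 0.$ At stage $m$ we are given $A_{m - 1},$ the maps $\ph_1, \ldots, \ph_{m - 1},$ and the \pj s $p_1, \ldots, p_{m - 1},$ together with the \fd\  subalgebra $C_{m - 1} = C^* \big( \{ \ph_k (\C \oplus D_k), \, p_k \colon k < m \} \big) \S A_{m - 1}.$ We first manufacture $\ph_m$ (the hard part, below), then put $p_m = p_{m - 1} \vee \ph_m (1, 0)$ and $A_m = C^* \big( A_{m - 1}, \, \ph_m (\C \oplus D_m), \, p_m, \, x_m \big).$ Forcing $x_m \in A_m$ at every stage makes $\bigcup_m A_m$ contain $\{ x_j \},$ which is~(\ref{P-2902BKR-13a-DU}); strict increase can be arranged because $A$ is infinite dimensional; and the definition of $A_m$ gives~(\ref{P-2902BKR-13a-PjInAn}) and the required containment of the image.

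The core of the recursion, and the step I expect to be the main obstacle, is producing $\ph_m \colon \C \oplus D_m \to A$ whose image commutes \emph{exactly} with the infinite dimensional algebra $A_{m - 1},$ as demanded by~(\ref{P-2902BKR-13a-Comm}). Lemma~\ref{L-2902BKR28} only delivers exact commutation with the \emph{finite dimensional} subalgebra $C_{m - 1}$ and approximate commutation with any prescribed finite list of elements of $A_{m - 1}.$ To upgrade this, I would choose an increasing sequence of finite sets $F_1 \S F_2 \S \cdots,$ with dense union in $A_{m - 1}$ and each containing a generating set of $C_{m - 1},$ together with tolerances $\et_l \to 0,$ and apply Lemma~\ref{L-2902BKR28} for each $l$ with $C = C_{m - 1},$ with $F_l$ playing the role of $a_1, \ldots, a_n,$ and with tolerance $\et_l.$ This yields unital \hm s $\ph_m^{(l)} \colon \C \oplus D_m \to A$ that commute exactly with $C_{m - 1}$ and asymptotically commute with every element of $A_{m - 1}.$ Since $\C \oplus D_m$ is \fd\  and hence semiprojective, such an asymptotically central sequence can be converted, by a reindexing argument that exploits the separability of $A_{m - 1},$ into a single unital \hm\  $\ph_m$ with image inside the relative commutant $A_{m - 1}' \cap A$ and arbitrarily close to $\ph_m^{(l)}$ for large $l.$ The genuinely delicate point is passing from \emph{asymptotic} commutation (a statement in a sequence algebra) to \emph{exact} commutation inside $A$ itself; this is where both the separability of $A_{m - 1}$ and the semiprojectivity of the \fd\  model are indispensable, and it is the heart of the proof.

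Once $\ph_m$ is in hand, conditions~(\ref{P-2902BKR-13a-pC}) and~(\ref{P-2902BKR-13a-Dom}) are immediate. Because the image of $\ph_m$ commutes with $A_{m - 1} \ni p_{m - 1}$ and with the earlier images (which lie in $A_{m - 1}$), the \pj s $\ph_1 (1, 0), \ldots, \ph_m (1, 0)$ mutually commute, so $p_m = p_{m - 1} \vee \ph_m (1, 0)$ is a \pj\  commuting with every $\ph_k,$ $k \leq m,$ and dominating each $\ph_k (1, 0);$ this is~(\ref{P-2902BKR-13a-pC}) and~(\ref{P-2902BKR-13a-Dom}).

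It remains to propagate the two size conditions~(\ref{P-2902BKR-13a-Sm}) and~(\ref{P-2902BKR-13a-Norm}) on $p_m$ with the \emph{fixed} $\ep.$ For the norm bound, I would feed Lemma~\ref{L-2902BKR28} not $y$ but the compressed element $y_{m - 1} = (1 - p_{m - 1}) y (1 - p_{m - 1});$ since $\ph_m (0, 1) = 1 - \ph_m (1, 0)$ commutes with $1 - p_{m - 1}$ and $(1 - p_m) y (1 - p_m) = \ph_m (0, 1) \, y_{m - 1} \, \ph_m (0, 1),$ the conclusion $\| \ph_m (0, 1) \, y_{m - 1} \, \ph_m (0, 1) \| > \| y_{m - 1} \| - \dt_m$ shows that $\| (1 - p_m) y (1 - p_m) \|$ decreases by at most $\dt_m$ at each stage, so it stays above $1 - \sum_{k \leq m} \dt_k > 1 - \ep,$ which is~(\ref{P-2902BKR-13a-Norm}). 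For~(\ref{P-2902BKR-13a-Sm}), one uses that $A$ has Property~(SP) (Proposition~\ref{P-2902SP}): one reserves in advance mutually orthogonal nonzero \pj s in ${\ov{y A y}}$ whose sizes sum to less than that of a \pj\  of ${\ov{y A y}},$ and arranges each new remainder $\ph_m (1, 0)$ to be \mvnt\  to a sub\pj\  of the $m$-th reserved piece, so that the orthogonal join $p_m$ is \mvnt\  to a \pj\  in ${\ov{y A y}}.$ Reconciling the comparison estimate that Lemma~\ref{L-2902BKR28} yields for $\ph_m (1, 0)$ with both the norm bound and the summability requirement is the chief bookkeeping burden, but it involves no new ideas beyond a careful choice, at each stage, of the positive element fed to Lemma~\ref{L-2902BKR28}.
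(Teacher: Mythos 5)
There is a genuine gap at the step you yourself single out as the heart of the proof: producing a unital \hm\ $\ph_m \colon \C \oplus D_m \to A$ whose image commutes \emph{exactly} with the infinite dimensional algebra $A_{m-1}.$ This cannot be done in general, because the relative commutant $A_{m-1}' \cap A$ may be trivial, and your own construction forces this to happen: you put $x_m \in A_m$ at every stage, where $(x_j)$ is dense in~$A,$ so after finitely many steps $A_{m-1}$ can contain a generating set of~$A$ (e.g.\ in an irrational rotation algebra, which has tracial rank zero and hence is tracially approximately divisible, take $x_1, x_2$ to be the two generating unitaries; then $A_2 = A$ and $A_2' \cap A = \C \cdot 1,$ which admits no unital \hm\ from $\C \oplus D_3$). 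Semiprojectivity of $\C \oplus D_m$ does not rescue this: it lets you lift a \hm\ into $\prod_n A / \bigoplus_n A$ back to $\prod_{n \geq N} A,$ but the lifted \hm s still only \emph{approximately} commute with $A_{m-1}$; there is no mechanism that places the image inside the exact commutant in $A$ itself. The asymptotic-to-exact upgrade you describe is not merely delicate, it is false.

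The paper avoids this by reversing the logical order, following the proof of Theorem 1.3(a) of~\cite{BKR}. One first constructs \emph{all} the \hm s $\ph_m$ by applying Lemma~\ref{L-2902BKR28} inductively, demanding only (i) exact commutation with the \emph{finite dimensional} algebra $C_{m-1}$ generated by the images of $\ph_1, \ldots, \ph_{m-1}$ (which the lemma supplies directly), and (ii) approximate commutation with $a_1, \ldots, a_m$ from a dense sequence with summable tolerances $2^{-m}.$ Only afterwards does one \emph{define} $A_m$ to be the set of all $a \in A$ commuting with $\ph_l (\C \oplus D_l)$ for every $l > m.$ With this definition, condition~(\ref{P-2902BKR-13a-Comm}) holds tautologically, and density of $\bigcup_m A_m$ follows from the summable commutator estimates via the averaging argument of~\cite{BKR}; the dense-sequence elements are only approximated by, never forced into, the $A_m.$ Your treatment of the remaining conditions is essentially the paper's: the remainder \pj s are compared against a prearranged sequence of \mops\ $f_1, f_2, \ldots$ in ${\overline{y A y}}$ obtained from Property~(SP), the norm condition is propagated through $y_m = \ph_m (0,1) y_{m-1} \ph_m (0,1)$ with tolerances summing to less than $\ep,$ and your $p_m = p_{m-1} \vee \ph_m (1,0)$ agrees with the paper's $p_m = 1 - q_m,$ $q_m = \ph_m (0,1) q_{m-1}.$ But none of that can be salvaged until the central commutation step is replaced by the order-reversal above.
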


\begin{proof}
Since $A$ has Property~(SP) (Proposition~\ref{P-2902SP}),
Lemma~1.10 of~\cite{PhT1}
gives orthogonal \nzp s $f_1, g_1 \in {\overline{y A y}}.$
Repeated applications of this lemma give
orthogonal \nzp s $f_2, g_2 \in g_1 A g_1,$
orthogonal \nzp s $f_3, g_3 \in g_2 A g_2,$
etc.
We thus get orthogonal \nzp s
$f_1, f_2, \ldots \in {\overline{y A y}}.$

\Wolog\  $\ep < 1.$

Choose $a_1, a_2, \ldots \in A$
such that $\{ a_1, a_2, \ldots \}$ is dense.
Apply Lemma~\ref{L-2902BKR28}
inductively to get
standard completely noncommutative finite dimensional C*-algebras
$D_1, D_2, \ldots,$
finite dimensional unital subalgebras $C_0 \S C_1 \S \cdots \S A$
(finite dimensionality is justified afterwards),
positive elements $y_0, y_1, \ldots \in A,$
and unital \hm s  $\ph_m \colon \C \oplus D_m \to A$
for $m \in \N,$
such that:
\begin{enumerate}
\item\label{L-2907Pf-AC}
$\| \ph_m (b) a_k - a_k \ph_m (b) \| < 2^{- m}$
for $k = 1, 2, \ldots, m$
and all $b \in \C \oplus D_m$ with $\| b \| \leq 1.$
\item\label{L-2907Pf-CDef}
$C_0 = \C \cdot 1$ and $C_m$ is the \ca\  generated
by $C_{m - 1}$ and $\ph_{m} (\C \oplus D_{m})$ for $m \geq 1.$
\item\label{L-2907Pf-ExC}
For $m \in \N,$ we have
$\ph_m (b) c = c \ph_m (b)$ for all
$c \in C_{m - 1}$ and
$b \in \C \oplus D_m.$
\item\label{L-2907Pf-Sm}
$\ph_m (1, 0)$ is \mvnt\  to a \pj\  in ${\overline{f_m A f_m}}.$
\item\label{L-2907Pf-ydef}
$y_0 = y$ and $y_m = \ph_{m} (0, 1) y_{m - 1} \ph_{m} (0, 1)$
for $m \geq 1.$
\item\label{L-2907Pf-Norm}
$\| y_m \| > \| y_{m - 1} \| - 2^{- m} \ep.$
\setcounter{TmpEnumi}{\value{enumi}}
\end{enumerate}
Using (\ref{L-2907Pf-CDef}) and~(\ref{L-2907Pf-ExC}),
we see that $C_0$ is finite dimensional
and that $C_m$ is finite dimensional
whenever $C_{m - 1}$ is finite dimensional.
Thus, $C_m$ is in fact finite dimensional for all $m \in \N.$

Following the proof of Theorem 1.3(a) of~\cite{BKR}
(which is given after Corollary~2.9 of~\cite{BKR}),
for $m \in \Nz$
let $A_m \S A$ be the subalgebra
\[
A_m = \big\{ a \in A \colon {\mbox{$a \ph_l (b) = \ph_l (b) a$
   for $l = m + 1, \, m + 2, \ldots$ and $b \in \C \oplus D_l$}} \big\}.
\]
Then,
as in~\cite{BKR},
the sequence $(A_m)_{m \in \N}$ is
nondecreasing,
\begin{equation}\label{Eq:2907Star}
\ph_m (\C \oplus D_m) \subset
 \big\{ a \in A_{m} \colon
    {\mbox{$a x = x a$ for all $x \in A_{m - 1}$}} \big\}
\end{equation}
for $m \in \N,$
and ${\ov{\bigcup_{m = 0}^{\I} A_m}} = A.$

We now have $(A_m)_{m \in \Nz}$ and $(\ph_m)_{m \in \N}$ as described,
except that we only know that $(A_m)_{m \in \Nz}$
is nondecreasing,
and we have verified conditions (\ref{P-2902BKR-13a-DU})
and~(\ref{P-2902BKR-13a-Comm})
of the statement.

Define $q_m \in A$ for $m \in \Nz$ inductively by $q_0 = 1$
and $q_m = \ph_m (0, 1) q_{m - 1}$ for $m \in \N.$
We claim that for all $m \in \N$ we have:
\begin{enumerate}
\setcounter{enumi}{\value{TmpEnumi}}
\item\label{P-2902BKR-13a-qPj}
$q_m$ is a \pj.
\item\label{P-2902BKR-13a-qAm1}
$q_m \in A_{m}.$
\item\label{P-2902BKR-13a-qleq}
$q_m \leq \ph_m (0, 1).$
\item\label{P-2902BKR-13a-qlqm}
$q_m \leq q_{m - 1}.$
\item\label{P-2902BKR-13a-qCent}
$q_m$ is in the center of~$C_{m}.$
\item\label{P-2902BKR-13a-qSubeq}
$1 - q_m
   \precsim \ph_1 (1, 0) \oplus \ph_2 (1, 0) \oplus \cdots
      \oplus \ph_m (1, 0).$
\item\label{P-2902BKR-13a-q12}
$q_m y q_m = y_{m}.$
\end{enumerate}
We prove these statements by induction on~$m.$
They are all easy for $m = 1$
(using (\ref{Eq:2907Star}) to get~(\ref{P-2902BKR-13a-qAm1}),
centrality of $(1, 0)$ in $\C \oplus D_m$
to get~(\ref{P-2902BKR-13a-qCent}),
and (\ref{L-2907Pf-ydef}) to get~(\ref{P-2902BKR-13a-q12})).
So assume they hold for $m - 1$;
we prove them for~$m.$

Since $q_{m - 1} \in C_{m - 1},$
it follows from~(\ref{L-2907Pf-ExC})
that $q_{m - 1}$ commutes with $\ph_m (0, 1).$
Therefore $q_m$ is a \pj\  with $q_m \leq \ph_m (0, 1)$
and $q_m \leq q_{m - 1}.$
This is (\ref{P-2902BKR-13a-qPj}),
(\ref{P-2902BKR-13a-qleq}),
and~(\ref{P-2902BKR-13a-qlqm}) for~$m.$
Also,
\[
1 - q_m
 = 1 - q_{m - 1} + q_{m - 1} \ph_m (1, 0)
 \precsim (1 - q_{m - 1}) \oplus \ph_m (1, 0).
\]
Together with (\ref{P-2902BKR-13a-qSubeq}) for $m - 1,$
this gives (\ref{P-2902BKR-13a-qSubeq}) for~$m.$
For~(\ref{P-2902BKR-13a-qAm1}),
we have $q_m \in A_{m}$ because $q_{m - 1} \in A_{m - 1} \S A_{m}$
and $\ph_m (0, 1) \in A_{m}.$

For~(\ref{P-2902BKR-13a-q12}),
we have already seen that $q_{m - 1}$ commutes with $\ph_m (0, 1).$
Using this at the second step,
(\ref{P-2902BKR-13a-q12})~for $m - 1$ at the third step,
and (\ref{L-2907Pf-ydef}) at the fourth step,
we get
\begin{align*}
q_m y q_m
& = \ph_m (0, 1) q_{m - 1} y \ph_m (0, 1) q_{m - 1}
  \\
& = \ph_m (0, 1) q_{m - 1} y q_{m - 1} \ph_m (0, 1)
  = \ph_m (0, 1) y_{m - 1} \ph_m (0, 1)
  = y_{m}.
\end{align*}
This is (\ref{P-2902BKR-13a-q12}) for~$m.$

Finally, we prove (\ref{P-2902BKR-13a-qCent}) for~$m.$
The \pj\  $q_{m - 1}$ is in $C_{m - 1}$
and commutes with all elements of $C_{m - 1}$
by (\ref{P-2902BKR-13a-qCent}) for $m - 1.$
Also, $q_{m - 1}$ commutes with all elements of $\ph_m (\C \oplus D_m)$
by~(\ref{L-2907Pf-ExC}).
Furthermore,
trivially $\ph_m (0, 1)$
is in $\ph_m (\C \oplus D_m)$
and commutes with all elements of $\ph_m (\C \oplus D_m).$
Also, $\ph_m (0, 1)$ commutes with all elements of $C_{m - 1}$
by~(\ref{L-2907Pf-ExC}).
Thus
\[
q_m = q_{m - 1} \ph_m (0, 1)
    \in C^* (C_{m - 1}, \, \ph_m (\C \oplus D_m))
    = C_{m}
\]
and commutes with all elements of this algebra.
This proves (\ref{P-2902BKR-13a-qCent}) for~$m,$
and completes the proof of the claim.

Now set $p_m = 1 - q_m$ for $m \in \N.$
Part~(\ref{P-2902BKR-13a-PjInAn}) of the conclusion
follows from~(\ref{P-2902BKR-13a-qAm1}).
Part~(\ref{P-2902BKR-13a-pC}) of the conclusion
follows from~(\ref{P-2902BKR-13a-qCent})
and the fact that $C_{m}$ is the subalgebra of~$A$
generated by the subalgebras $\ph_k (\C \oplus D_k)$
for $k = 1, 2, \ldots, m.$
Part~(\ref{P-2902BKR-13a-Dom}) of the conclusion
follows from~(\ref{P-2902BKR-13a-qleq})
and an induction argument using~(\ref{P-2902BKR-13a-qlqm}).
For part~(\ref{P-2902BKR-13a-Sm}) of the conclusion,
we use~(\ref{P-2902BKR-13a-qSubeq}), (\ref{L-2907Pf-Sm}),
and the fact that
$f_1, f_2, \ldots$ are \mops\  in ${\overline{y A y}}.$

We prove (\ref{P-2902BKR-13a-Norm}) of the conclusion.
An induction argument
using (\ref{L-2907Pf-ydef}) and~(\ref{L-2907Pf-Norm})
gives
\[
\| y_m \|
 > 1 - \frac{\ep}{2} - \frac{\ep}{4} - \cdots - \frac{\ep}{2^{m}}.
\]
So $\| y_m \| > 1 - \ep$ for all $m \in \N.$
{}From~(\ref{P-2902BKR-13a-q12}),
we now get $\| (1 - p_m) y (1 - p_m) \| = \| y_{m} \| > 1 - \ep.$

It remains only to prove that
$(A_m)_{m \in \N}$ is strictly increasing.
Since $\ep < 1,$
we have $\| (1 - p_m) y (1 - p_m) \| \neq 0,$
so that $\ph_m (0, 1) \neq 0.$
Thus $\ph_m (\C \oplus D_m)$
contains a summand isomorphic to $M_l$ for some $l \geq 2.$
Therefore~(\ref{Eq:2907Star}) implies that
$A_m$ is a proper subset of $A_{m + 1}.$
\end{proof}

The following result is a weak analog of Corollary~2.10 of~\cite{BKR}.

\begin{cor}\label{C-2907-210}
Let $A$ be a simple separable infinite dimensional unital C*-algebra.
Suppose that $A$ is tracially approximately divisible.
Then for every $\ep > 0,$ every $n, r \in \N$ with $r \geq 2,$
every $a_1, a_2, \ldots, a_n \in A,$
and every $y \in A_{+}$ with $\| y \| = 1,$
there exist
a completely noncommutative finite dimensional C*-algebra~$D$
and a unital \hm\  $\ph \colon \C \oplus D \to A$
such that:
\begin{enumerate}
\item\label{D-2907-210-AC}
$\| \ph (b) a_k - a_k \ph (b) \| < \ep$ for $k = 1, 2, \ldots, n$
and all $b \in \C \oplus D$ with $\| b \| \leq 1.$
\item\label{D-2907-210-Rk}
Every central summand of $D$ has matrix size at least~$r.$
\item\label{D-2907-210-Sm}
$\ph (1, 0)$ is \mvnt\  to a \pj\  in ${\overline{y A y}}.$
\item\label{D-2907-210-Norm}
$\| \ph (0, 1) y \ph (0, 1) \| > 1 - \ep.$
\end{enumerate}
\end{cor}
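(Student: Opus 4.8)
The plan is to reduce everything to the commuting tower produced by Proposition~\ref{P-2902BKR-13a} and then to boost the matrix sizes by tensoring together a block of consecutive factors. First I would fix $L \in \N$ with $2^L \geq r$, and apply Proposition~\ref{P-2902BKR-13a} with the given $\ep$ and $y,$ obtaining the increasing subalgebras $(A_m),$ the completely noncommutative finite dimensional C*-algebras $(D_m),$ the unital \hm s $\ph_m \colon \C \oplus D_m \to A_m,$ and the \pj s $(p_m).$ Since $A = \ov{\bigcup_m A_m}$ by condition~(\ref{P-2902BKR-13a-DU}), I would choose $m_0 \in \N$ and elements $a_1', a_2', \ldots, a_n' \in A_{m_0}$ with $\| a_k - a_k' \| < \tfrac{1}{2} \ep$ for all $k,$ and then work with the $L$ consecutive indices $m_0 + 1, \ldots, M,$ where $M = m_0 + L.$

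Next I would form $D = D_{m_0 + 1} \otimes \cdots \otimes D_M$ and $e = \prod_{i = m_0 + 1}^{M} \ph_i (0, 1).$ Because the ranges of the $\ph_i$ pairwise commute (for $i < j$ one has $\ph_i (\C \oplus D_i) \S A_i \S A_{j - 1},$ which $\ph_j$ centralizes by condition~(\ref{P-2902BKR-13a-Comm})), the assignment $b_{m_0 + 1} \otimes \cdots \otimes b_M \mapsto \prod_i \ph_i (0, b_i)$ defines a unital \hm\ $\ps \colon D \to e A e.$ I would then set $\ph (\ld, d) = \ld (1 - e) + \ps (d),$ which is a unital \hm\ $\ph \colon \C \oplus D \to A$ with $\ph (0, 1) = e$ and $\ph (1, 0) = 1 - e.$

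The four conditions are then checked as follows. Condition~(\ref{D-2907-210-Rk}) is immediate: each $D_i$ is completely noncommutative, so every central summand has matrix size at least~$2,$ and hence every central summand of $D$ has matrix size at least $2^L \geq r.$ For condition~(\ref{D-2907-210-AC}), each $a_k' \in A_{m_0} \S A_{i - 1}$ is centralized exactly by $\ph_i$ for $m_0 < i \leq M$ (again condition~(\ref{P-2902BKR-13a-Comm})), hence by all of $\ph (\C \oplus D)$; so for $\| b \| \leq 1$ one gets $\| \ph (b) a_k - a_k \ph (b) \| = \| \ph (b) (a_k - a_k') - (a_k - a_k') \ph (b) \| \leq 2 \| a_k - a_k' \| < \ep.$ The heart of the matter is the interplay of~(\ref{D-2907-210-Sm}) and~(\ref{D-2907-210-Norm}), which I would deduce from the order relation $1 - p_M \leq e.$ This holds because condition~(\ref{P-2902BKR-13a-Dom}) gives $\ph_i (1, 0) \leq p_M,$ hence $1 - p_M \leq \ph_i (0, 1)$ for each $i \leq M,$ while condition~(\ref{P-2902BKR-13a-pC}) makes $p_M$ commute with every $\ph_i (0, 1)$; for commuting \pj s this forces $1 - p_M \leq \prod_i \ph_i (0, 1) = e.$ From $1 - e \leq p_M$ and condition~(\ref{P-2902BKR-13a-Sm}) I get $\ph (1, 0) = 1 - e \precsim p_M,$ which is \mvnt\ to a \pj\ in $\ov{y A y},$ giving condition~(\ref{D-2907-210-Sm}); and from $(1 - p_M) e = 1 - p_M$ together with condition~(\ref{P-2902BKR-13a-Norm}) I get $\| e y e \| \geq \| (1 - p_M) y (1 - p_M) \| > 1 - \ep,$ which is condition~(\ref{D-2907-210-Norm}).

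The only genuine difficulty is the simultaneous control demanded by~(\ref{D-2907-210-Sm}) and~(\ref{D-2907-210-Norm}): the remainder $\ph (1, 0)$ must be small relative to $y,$ while the complementary corner $\ph (0, 1)$ must still see almost all of the norm of the same $y.$ A naive iteration of tracial approximate divisibility would shrink the large corner at each of the $L$ steps and lose the lower norm bound. This obstacle is, however, already resolved inside Proposition~\ref{P-2902BKR-13a}, whose \pj s $p_m$ satisfy both that $p_m$ is \mvnt\ to a \pj\ in $\ov{y A y}$ and that $\| (1 - p_m) y (1 - p_m) \| > 1 - \ep$; once that proposition is invoked, the remaining work here is only the projection bookkeeping above and the routine commutator estimate.
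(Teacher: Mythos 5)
Your proof is correct, and it follows the same overall strategy as the paper's: invoke Proposition~\ref{P-2902BKR-13a}, tensor together a block of the algebras $D_i$ to force every central summand to have matrix size at least $2^L \geq r,$ and let the projections $p_m$ (which satisfy conditions (\ref{P-2902BKR-13a-Sm}) and~(\ref{P-2902BKR-13a-Norm}) simultaneously) control the remainder --- you correctly identify that this simultaneous control is the real content and that it is already packaged in the proposition. The one substantive difference is your index shift: you approximate the $a_k$ by elements of $A_{m_0}$ and tensor only the factors $D_{m_0 + 1}, \ldots, D_{m_0 + L},$ whereas the paper chooses a single $m$ with $2^m \geq r$ and $\dist (a_j, A_m) < \tfrac{\ep}{2}$ and tensors $D_1 \otimes \cdots \otimes D_m.$ Your version is the one that actually delivers condition~(\ref{D-2907-210-AC}): by the construction in Proposition~\ref{P-2902BKR-13a}, elements of $A_{m_0}$ commute with the range of $\ph_i$ only for $i > m_0,$ so an element near $A_m$ need not approximately commute with $\ph_1, \ldots, \ph_m.$ The paper's proof never explicitly verifies condition~(\ref{D-2907-210-AC}), and as literally written it would need exactly the adjustment you made. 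The remaining differences are cosmetic: the paper passes to $D = E / \ker (\ps_0)$ (harmless either way, since $\ph$ need not be injective and central summands of a quotient still have matrix size at least~$r$), and in the paper's version $\ph (1, 0)$ equals $p_m$ on the nose while in yours it is the subprojection $1 - e \leq p_M$ (also harmless, since ${\overline{y A y}}$ is hereditary, exactly as you argue).
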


\begin{proof}
Apply Proposition~\ref{P-2902BKR-13a}
with $y$ and $\ep$ as given,
and use the notation there for the resulting objects.
Choose $m \in \N$ so large that $2^m \geq r$
and that $\dist (a_j, A_m) < \frac{\ep}{2}$
for $j = 1, 2, \ldots, n.$
Set $E = D_1 \otimes D_2 \otimes \cdots \otimes D_m.$
Since $D_k$
is a completely noncommutative finite dimensional C*-algebra
for $k = 1, 2, \ldots, m,$
the matrix size of every simple summand of~$E$ is at least $2^m \geq r.$

Since $\ph_k (\C \oplus D_k) \S A_k,$
it follows from
Proposition \ref{P-2902BKR-13a}(\ref{P-2902BKR-13a-Comm})
that the ranges of $\ph_1, \ph_2, \ldots, \ph_m$ all commute
with each other,
and from
Proposition \ref{P-2902BKR-13a}(\ref{P-2902BKR-13a-pC})
and $p_m$ commutes with the ranges of these \hm s.
Therefore there is a unital \hm\  %
$\ps_0 \colon E \to (1 - p_m) A (1 - p_m)$
such that
\[
\ps_0 (b_1 \otimes b_2 \otimes \cdots \otimes b_m)
 = \ph_1 (0, b_1) \ph_2 (0, b_2) \cdots \ph_m (0, b_m)
\]
for all $b_1 \in D_1, \, b_2 \in D_2, \, \ldots, \, b_m \in D_m.$
Set $D = E / \ker (\ps_0)$
and let $\ps \colon D \to (1 - p_m) A (1 - p_m)$
be the induced \hm.
Clearly
the matrix size of every simple summand of~$D$
is also at least~$r.$

Define $\ph \colon \C \oplus D \to A$
by $\ph (\ld, b) = \ld p_m + \ps_0 (b)$
for $\ld \in \C$ and $b \in D.$
Then $\ph (1, 0)$ is \mvnt\  to a \pj\  in ${\overline{y A y}}$
by Proposition \ref{P-2902BKR-13a}(\ref{P-2902BKR-13a-Sm}),
and $\| \ph (0, 1) y \ph (0, 1) \| > 1 - \ep$
by Proposition \ref{P-2902BKR-13a}(\ref{P-2902BKR-13a-Norm}).
\end{proof}

\begin{rmk}\label{R-2914StComp}
It is almost certainly true that
a tracially approximately divisible \ca\  %
has strict comparison of positive elements
in the sense of Subsection~2.2 of~\cite{Tm}.
If we had defined tracial approximate divisibility
allowing only simple algebras in Definition~\ref{D-2902TrAppDiv},
then in Corollary~\ref{C-2907-210} we could take
$D$ to be simple with arbitrarily large matrix size.
In this case,
it would be immediate that $A$ is tracially $Z$-stable in the sense
of Definition~2.1 of~\cite{HO}.
Theorem~3.3 of~\cite{HO} would then imply
strict comparison of positive elements.

With our definition,
the argument used for the proof of Proposition~2.7 of~\cite{BKR}
allows us to take $D$ in Corollary~\ref{C-2907-210}
to be $M_r \oplus M_{r + 1}.$
So $A$ satisfies a version of tracial $Z$-absorption
using order zero maps from $M_r \oplus M_{r + 1}$ instead of~$M_r.$
The arguments of~\cite{HO}
almost certainly apply under this weaker condition,
and thus should imply that
algebras as in Definition~\ref{D-2902TrAppDiv}
have strict comparison of positive elements.
With this result in hand,
it is easy to use Corollary~\ref{C-2907-210}
with $D = M_s \oplus M_{s + 1},$
with $s$ sufficiently large,
to show that in Corollary~\ref{C-2907-210}
one can in fact take $D = M_r$
and that in Definition~\ref{D-2902TrAppDiv}
one can take $D = M_2.$
\end{rmk}

We do not prove
the conjectures implicit in Remark~\ref{R-2914StComp} here,
because it is already known that tracial rank zero implies the
form of strict comparison that we actually need,
and because, for now, we do not have other interesting examples.

We recall the following definition.

\begin{dfn}\label{D-2910OrdPj}
For a simple unital \ca~$A,$
we say that the
{\emph{order on \pj s over $A$ is determined by traces}}
if whenever $p, q \in \Mi (A)$ are \pj s such that
$\ta (p) < \ta (q)$ for every tracial state $\ta$ on~$A,$
then $p \precsim q.$
\end{dfn}

If the statements in Remark~\ref{R-2914StComp} are correct,
then tracial approximate divisibility of~$A$ implies that
the order on \pj s over~$A$ is determined by traces.
Thus, in the following lemma and its consequences,
this condition can probably be omitted.
Since our main purpose in introducing
tracial approximate divisibility
is to clarify a proof
involving simple \ca s with tracial rank zero,
and since it is already known
that the order on \pj s over such a \ca\   is determined by traces,
we do not prove this here.

\begin{ntn}\label{N-2907Inn}
We let $\Inn (A)$ denote the set of inner automorphisms
$a \mapsto u a u^*$ with $u \in A$ unitary.
We let $\Innb (A)$ be the closure in $\Aut (A)$
of $\Inn (A)$ in the topology of Lemma~\ref{L:AutTop}.
This is the set of approximately inner automorphisms.
\end{ntn}

\begin{lem}\label{InnApprox}
Let $A$ be a simple separable infinite dimensional
unital \ca\  which is tracially approximately divisible
and such that the order on \pj s over~$A$ is determined by traces.
Let $u \in A$ be unitary.
Let $F \S A$ be finite, let $\ep > 0,$ let $n \in \N,$
and let $x \in A$ be a nonzero positive element.
Then, following Notation~\ref{SetNtn},
we have $\Ad (u) \in {\overline{W (F, \ep, n, x) \cap \Inn (A)}}.$
\end{lem}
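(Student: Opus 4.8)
The plan is to unwind the closure directly. A basic neighborhood of $\Ad (u)$ in the metric of Lemma~\ref{L:AutTop} is determined by a finite set $G \S A$ (\wolog\ of elements of norm at most~$1$) and a tolerance $\dt > 0,$ so it suffices to produce, for each such $G$ and $\dt,$ a unitary $w \in A$ with $\Ad (w) \in W (F, \ep, n, x)$ and with $\| \Ad (w) (a) - \Ad (u) (a) \| < \dt$ and $\| \Ad (w)^{-1} (a) - \Ad (u)^{-1} (a) \| < \dt$ for all $a \in G.$ The guiding idea is that a suitable cyclic shift unitary in a large approximately central matrix block, when multiplied into $u,$ perturbs $\Ad (u)$ negligibly on $G$ while installing a Rokhlin tower of height $n + 1.$

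First I would prepare room for the small remainder. Since $A$ has Property~(SP) (Proposition~\ref{P-2902SP}), arguing as in the proof of Proposition~\ref{P-2902BKR-13a} I obtain orthogonal \nzp s $q_1, q_2 \in {\ov{x A x}};$ by compactness of $T (A),$ the number $c_1 = \inf_{\ta \in T (A)} \ta (q_1)$ is strictly positive. I then apply Corollary~\ref{C-2907-210} to the finite set $F \cup G \cup \Ad (u^{-1}) (G) \cup \{ u \},$ with a tolerance $\dt' < \min \big( \ep, \tfrac{1}{2} \dt \big),$ with the positive element $y = q_2,$ and with matrix‑size bound $r$ chosen so large that $(n + 1) / r < c_1.$ This yields a projection $p = \ph (0, 1),$ a finite dimensional algebra $D$ all of whose matrix summands have size at least~$r,$ and a unital copy $\ph (0, D) \S p A p$ such that $\ph (\C \oplus D)$ commutes with that finite set to within~$\dt',$ and such that $1 - p = \ph (1, 0)$ is \mvnt\ to a \pj\ in ${\ov{q_2 A q_2}},$ hence $1 - p \precsim q_2.$

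Inside $\ph (0, D)$ I build the tower. In each matrix summand $M_m$ of~$D$ (so $m \geq r$) pick mutually orthogonal projections of rank $\lfloor m / (n + 1) \rfloor$ that are cyclically permuted by a shift unitary of that summand; assembling over summands gives \mops\ $e_0, e_1, \ldots, e_n \in \ph (0, D)$ and a unitary $v \in \ph (0, D)$ with $v e_j v^* = e_{j + 1}$ for $j = 0, 1, \ldots, n - 1.$ The remainder $g = p - \sum_{j = 0}^n e_j$ has rank less than $n + 1$ in each block, so $\ta (g) < (n + 1) / r$ for every $\ta \in T (A).$ Put $z = (1 - p) + v,$ a unitary in~$A,$ and set $w = u z;$ since each $e_j \leq p$ we have $z e_j z^* = v e_j v^* = e_{j + 1}$ exactly. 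The verifications of conditions~(\ref{2906SetNtn-1}) and~(\ref{2906SetNtn-2}) of Notation~\ref{SetNtn} and of the proximity to $\Ad (u)$ are now routine almost‑commutation estimates: because $p,$ $v,$ and hence $z$ commute with each $a \in G$ and each $\Ad (u^{-1}) (a)$ to within $2 \dt',$ one gets $\| \Ad (w) (a) - \Ad (u) (a) \| = \| z a z^* - a \| < 2 \dt' < \dt$ and likewise for the inverses; moreover $\Ad (w) (e_j) = u z e_j z^* u^* = u e_{j + 1} u^*,$ so $\| \Ad (w) (e_j) - e_{j + 1} \| = \| u e_{j + 1} - e_{j + 1} u \| < \dt' < \ep,$ and each $e_j \in \ph (0, D)$ almost commutes with $F.$

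The main obstacle is condition~(\ref{2906SetNtn-3}), and it is precisely here that the hypothesis that the order on \pj s over~$A$ is determined by traces is used. I must dominate the complement $1 - \sum_{j = 0}^n e_j = (1 - p) + g$ by a \pj\ in ${\ov{x A x}}.$ Take $q = q_1 + q_2 \in {\ov{x A x}}$ (the summands being orthogonal). For every $\ta \in T (A),$ using $1 - p \precsim q_2$ and $\ta (g) < (n + 1) / r < c_1 \leq \ta (q_1),$ one has $\ta \big( (1 - p) + g \big) = \ta (1 - p) + \ta (g) < \ta (q_2) + \ta (q_1) = \ta (q).$ The hypothesis then yields $(1 - p) + g \precsim q,$ which is condition~(\ref{2906SetNtn-3}). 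Thus $\Ad (w) \in W (F, \ep, n, x) \cap \Inn (A)$ lies in the prescribed neighborhood of $\Ad (u),$ completing the argument. The only delicate bookkeeping is this last step: one must reserve enough room in ${\ov{x A x}}$ (here the spare \pj~$q_1$) to absorb simultaneously the small complement $1 - p$ and the non‑divisibility remainder~$g,$ and the device making this work is taking the matrix blocks large so that the uniform trace bound on~$g$ falls below the reserved trace~$c_1.$
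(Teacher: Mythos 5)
Your proof is correct and follows essentially the same route as the paper's: Property~(SP) to reserve two orthogonal projections in $\overline{x A x}$, Corollary~\ref{C-2907-210} with large matrix blocks to produce an approximately central height-$(n+1)$ tower together with a shift unitary $z$, and the trace-order hypothesis to absorb the two remainders. The only (harmless) deviation is that you take the approximating automorphism to be $\Ad (u z)$ directly and verify the tower conditions approximately, using that $u$ was placed in the almost-centrality set, whereas the paper first replaces $u$ by a nearby unitary commuting exactly with the tower and conjugates by the product.
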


\begin{proof}
It suffices to prove that for every finite set $T \S A$
and every $\dt > 0,$ there is $\af \in W (F, \ep, n, x) \cap \Inn (A)$
such that $\| \af (a) - u a u^* \| < \dt$ for all $a \in T.$

To simplify the proof, we may assume that $\| a \| \leq 1$
for all $a \in F \cup T.$
Set
\[
S = F \cup T \cup \{ u \}
\andeqn
\ep_0
 = \min \left( \frac{\ep}{2 n + 4}, \, \frac{\dt}{2 n + 6} \right).
\]
Choose $\ep_1 > 0$ such that
$\ep_1 \leq \ep_0$ and whenever $B$ is a unital  \ca\  %
and $a$ and $u$ are elements of $B$
such that $u$ is unitary and $\| a - u \| < (n + 2) \ep_1,$
then there is a unitary $v$ in the unital \ca\  generated by $a$
such that $\| v - u \| < \ep_0.$
Since $A$ has Property~(SP) (by Proposition~\ref{P-2902SP}),
we can use Lemma~1.10 of~\cite{PhT1}
to choose two nonzero orthogonal \pj s $q, r \in {\overline{x A x}}.$
Set $\rh = \inf_{\ta \in T (A)} \ta (q) > 0.$
Choose $m \in \N$ such that
$m > ( 1 + 1 / \rh) (n + 1).$

Use Corollary~\ref{C-2907-210} to find
a \pj~$f \in A,$
a completely noncommutative finite dimensional C*-algebra~$D,$
and an injective unital \hm\  $\ph \colon D \to f A f,$
such that:
\begin{enumerate}
\item\label{D-2905Pf-AC}
$\| \ph (b) a - a \ph (b) \| < \ep_1$ for all $a \in S$
and all $b \in D$ with $\| b \| \leq 1.$
\item\label{D-2905Pf-Sm}
$1 - f$ is \mvnt\  to a \pj\  in $r A r.$
\item\label{D-2905Pf-Big}
There are $N \in \N$ and $d (1), d (2), \ldots, d (N) \in \N$
such that
$D \cong \bigoplus_{l = 1}^N M_{d (l)}$
and $d (l) \geq m$ for $l = 1, 2, \ldots, N.$
\end{enumerate}
For convenience,
we identify $D$ with $\ph (D) \S A,$
and suppress $\ph$ in the notation.
For $l = 1, 2, \ldots, N$ we similarly
identify $M_{d (l)}$ with its image in~$A.$
Let $f_l \in A$ be the identity of $M_{d (l)}.$
Thus $f = \sum_{l = 1}^N f_l.$

For $l = 1, 2, \ldots, N,$
there are $s (l), t (l) \in \Nz$
such that $d (l) = s (l) (n + 1) + t (l)$
and $t (l) \leq n.$
Then
\[
s (l)
 \geq \frac{d (l)}{n + 1} - 1
 \geq \frac{m}{n + 1} - 1
 > \frac{1}{\rh}.
\]
Take ranks with respect to $M_{n + 1}$ and $M_{d (l)}.$
Let $\ps_l \colon M_{n + 1} \to M_{d (l)}$
be any \hm\  (usually not unital)
which sends a rank one \pj\  in $M_{n + 1}$
to a rank $s (l)$ \pj\  in $M_{d (l)}.$
For every $\ta \in T (A),$
we have
\begin{align*}
\ta (f_l - \ps_l (1) )
& = \ta (f_l) \left( \frac{\rank (f_l - \ps_l (1))}{d (l)} \right)
\\
& \leq \frac{\ta (f_l) n}{s (l) (n + 1) + t (l)}
  < \frac{\ta (f_l)}{s (l)}
  < \rh \ta (f_l)
  \leq \ta (q) \ta (f_l).
\end{align*}

Now define a \hm\  $\ps \colon M_{n + 1} \to D$
by $\ps (a) = \sum_{l = 1}^N \ps_l (a).$
For every $\ta \in T (A),$
we get
\[
\ta (f - \ps (1))
 = \sum_{l = 1}^N \ta (f_l - \ps_l (1) )
 < \sum_{l = 1}^N \ta (q) \ta (f_l)
 = \ta (q) \ta (f)
 \leq \ta (q).
\]
Since traces determine the order on \pj s over~$A,$
it follows that $f - \ps (1) \precsim q.$
Combining this with $1 - f \precsim r,$
we find that $1 - \ps (1)$ is \mvnt\  to
a \pj\  in ${\overline{x A x}}.$
Moreover,
$\| \ps (b) a - a \ps (b) \| < \ep_1$ for all $a \in S$
and all $b \in M_{n + 1}$ with $\| b \| \leq 1.$

Let $( g_{j, k} )_{0 \leq j, k \leq n}$
be a system of matrix units for $M_{n + 1}.$
Define \pj s $e_j \in A$ by
$e_j = \ps (g_{j, j})$ for $j = 0, 1, \ldots, n.$

We claim that for every $a \in S$ there is $c \in A$
such that $\| c - a \| < (n + 2) \ep_1$
and $c$ commutes with every element of
$\C (1 - \ps (1)) + \ps (M_{n + 1}).$
Given~$a,$
set
\[
c = (1 - \ps (1)) a (1 - \ps (1))
     + \sum_{j = 0}^{n} \ps (g_{j, 0}) a \ps (g_{0, j}).
\]
The commutation relation is easily checked.
Also,
\begin{align*}
\| c - a \|
& = \left\| c - \left( a [1 - \ps (1)]
            + \sum_{j = 0}^{n} a \ps (g_{j, j}) \right) \right\|
      \\
& \leq \big\| [1 - \ps (1)] a [1 - \ps (1)] - a [1 - \ps (1)] \big\|
       + \sum_{j = 0}^{n}
           \| \ps (g_{j, 0}) a \ps (g_{0, j}) - a \ps (g_{j, j}) \|
      \\
& \leq \big\| [1 - \ps (1)] a - a [1 - \ps (1)] \big\|
       + \sum_{j = 0}^{n}
           \| \ps (g_{j, 0}) a - a \ps (g_{j, 0}) \|
      \\
& < (n + 2) \ep_1.
\end{align*}
This proves the claim.

We next claim that there is a unitary $v \in A$
such that $v$ commutes with every element of
$\C (1 - \ps (1)) + \ps (M_{n + 1})$
and $\| v - u \| < \ep_0.$
Let $c$ be as in the previous claim,
with $a = u.$
The choice of $\ep_1$ implies that there is a unitary $v$ in the
\ca\  generated by~$c$ such that $\| v - u \| < \ep_0.$
Clearly $v$ commutes with every element of
$\C (1 - \ps (1)) + \ps (M_{n + 1}).$
The claim is proved.

Define a unitary $w \in A$ by
\[
w = 1 - \ps (1) + \ps \left( g_{0, n}
     + \sum_{j = 0}^{n - 1} g_{j + 1, \, j} \right).
\]
Then define $\af \in \Aut (A)$ by $\af = \Ad (w v).$
We claim that $\| \af (a) - u a u^* \| < \dt$ for all $a \in T$
and that $\af \in W (F, \ep, n, x).$
This will complete the proof of the lemma.
By construction,
we have
\begin{equation}\label{Eq:vwComm}
w v = v w.
\end{equation}

We prove the first part.
Let $a \in T.$
Since $T \S S,$
there is $c \in A$
such that $\| c - a \| < (n + 2) \ep_1$
and $c$ commutes with every element of
$\C (1 - \ps (1)) + \ps (M_{n + 1}).$
In particular, $c$ commutes with~$w.$
Using this and~(\ref{Eq:vwComm}) at the fourth step,
we therefore get
\begin{align*}
\| \af (a) - u a u^* \|
& = \| w v a v^* w^* - u a u^* \|
  \leq 2 \| v - u \| + \| w v a v^* w^* - v a v^* \|
    \\
& \leq 2 \| v - u \| + 2 \| a - c \|
         + \| w v c v^* w^* - v c v^* \|
  \\
& = 2 \| v - u \| + 2 \| a - c \|
  < 2 \ep_0 + 2 (n + 2) \ep_1
\leq \dt.
\end{align*}

We now prove that $\af \in W (F, \ep, n, x).$
We start with condition~(\ref{2906SetNtn-1}) of
Notation~\ref{SetNtn}.
For $j = 0, 1, \ldots, n - 1,$
we have $v e_j v^* = e_j$
since $e_j \in \ps (M_{n + 1}).$
Also,
it is easy to check that $w e_j w^* = e_{j + 1}.$
Therefore $\af (e_j) = e_{j + 1}.$

For condition~(\ref{2906SetNtn-2}),
let $a \in F.$
Since $T \S S,$
there is $c \in A$
such that $\| c - a \| < (n + 2) \ep_1$
and $c$ commutes with every element of
$\C (1 - \ps (1)) + \ps (M_{n + 1}).$
In particular, $c e_j = e_j c$
for $j = 0, 1, \ldots, n.$
Therefore
\[
\| e_j a - a e_j \|
  \leq 2 \| a - c \| + \| e_j c - c e_j \|
  = 2 \| a - c \|
  < 2 (n + 2) \ep_1
  \leq \ep.
\]

For condition~(\ref{2906SetNtn-3}),
we have $\sum_{j = 0}^{n} e_j = \ps (1)$
and we already saw that $1 - \ps (1)$
is \mvnt\  to a
\pj\  in ${\overline{x A x}}.$
\end{proof}

\begin{thm}\label{InnTAD}
Let $A$ be a simple separable infinite dimensional
unital \ca\  which is tracially approximately divisible
and such that the order on \pj s over~$A$ is determined by traces.
Then there is a dense $G_{\dt}$-set $G \S \Innb (A)$
such that every $\af \in G$ has the \trp.
\end{thm}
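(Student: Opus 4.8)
The plan is to transcribe the Baire-category scheme of Theorem~\ref{Main} into the closed subset $\Innb (A) \S \Aut (A)$ rather than carrying it out in all of $\Aut (A),$ supplying density from Lemma~\ref{InnApprox} in place of Proposition~\ref{P:TRPDense}. The first point to record is that, by definition (Notation~\ref{N-2907Inn}), $\Innb (A)$ is closed in $\Aut (A),$ and hence, by Lemma~\ref{L:AutTop}, a complete metric space in the restriction of the metric $\rh_S.$ This makes the Baire category theorem available on $\Innb (A).$

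Next I would show that
\[
G = \{ \af \in \Innb (A) \colon {\mbox{$\af$ has the \trp}} \}
\]
is a $G_{\dt}$-subset of $\Innb (A).$ By Proposition~\ref{P-2902SP}, tracial approximate divisibility gives $A$ Property~(SP). Property~(SP) is the only consequence of real rank zero actually used in the proof of Lemma~\ref{Intersect} (it enters solely to replace an arbitrary nonzero positive $x$ by a \pj\ in $\ov{x A x}$ drawn from a fixed countable family, the remainder condition then transferring along a \mvnc), so the argument of that lemma applies here and exhibits the set of all $\af \in \Aut (A)$ with the \trp\ as a countable intersection $\bigcap_i W (F_i, \ep_i, n_i, x_i).$ Intersecting with $\Innb (A)$ and applying Lemma~\ref{Open} (which requires only that $A$ be a \sfsuca), each $W (F_i, \ep_i, n_i, x_i) \cap \Innb (A)$ is relatively open in $\Innb (A),$ so $G$ is $G_{\dt}$ in $\Innb (A).$

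It then remains to prove that $G$ is dense, and by the Baire category theorem it suffices to prove that each relatively open set $W (F, \ep, n, x) \cap \Innb (A)$ is dense in $\Innb (A).$ This is exactly where Lemma~\ref{InnApprox} does the work. Fix $\bt \in \Innb (A)$ and a neighborhood of $\bt.$ Since $\Inn (A)$ is dense in $\Innb (A),$ there is a unitary $u \in A$ with $\Ad (u)$ in that neighborhood. Lemma~\ref{InnApprox} gives $\Ad (u) \in \ov{W (F, \ep, n, x) \cap \Inn (A)},$ so I may choose $\af \in W (F, \ep, n, x) \cap \Inn (A)$ as close to $\Ad (u)$ as I like; then $\af$ lies in the chosen neighborhood of $\bt$ and, since $\Inn (A) \S \Innb (A),$ in $W (F, \ep, n, x) \cap \Innb (A).$ Thus each $W (F, \ep, n, x) \cap \Innb (A)$ is dense and relatively open in the complete space $\Innb (A),$ and their countable intersection $G$ is a dense $G_{\dt}$-set.

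I expect the only genuine subtlety to be the first step of the $G_{\dt}$ argument: Lemma~\ref{Intersect} is stated under the hypothesis of real rank zero, which is not among the present assumptions, so one must verify that Property~(SP) — available here from Proposition~\ref{P-2902SP} — is all that its proof consumes. Everything else is a routine adaptation of the Baire scheme of Theorem~\ref{Main} to the complete subspace $\Innb (A),$ with Lemma~\ref{InnApprox} replacing Proposition~\ref{P:TRPDense} as the density input; in particular the real analytic content has already been discharged in Lemma~\ref{InnApprox}.
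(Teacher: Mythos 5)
Your proof is correct and follows essentially the same route as the paper's: $\Innb (A)$ is complete, Lemmas \ref{Intersect} and \ref{Open} give the $G_{\dt}$ structure, Lemma \ref{InnApprox} gives density of each $W (F, \ep, n, x) \cap \Innb (A),$ and the Baire category theorem finishes. Your explicit verification that Property~(SP) (available from Proposition \ref{P-2902SP}) suffices in place of the real rank zero hypothesis of Lemma \ref{Intersect} is a detail the paper's own one-line appeal to that lemma passes over silently, and you handle it correctly.
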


\begin{proof}
The set $\Innb (A)$ is closed in $\Aut (A),$ hence complete.
By Lemma~\ref{Intersect},
there is a countable intersection $G$ of
sets of the form $W (F, \ep, n, x) \cap \Innb (A)$
such that every $\af \in G$ has the \trp.
By Lemma~\ref{Open}, each of the sets $W (F, \ep, n, x)$
is open in $\Innb (A),$
and by Lemma~\ref{InnApprox},
each of the sets $W (F, \ep, n, x)$
is dense in $\Innb (A).$
\end{proof}

\begin{prp}\label{P-2907TRZTAD}
Let $A$ be a simple separable infinite dimensional
unital \ca\  with tracial rank zero.
Then $A$ is tracially approximately divisible.
\end{prp}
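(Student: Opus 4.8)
The plan is to build an approximately central, completely noncommutative finite dimensional subalgebra of nearly full trace, which is exactly what Definition~\ref{D-2902TrAppDiv} demands, and to extract it from the finite dimensional approximants furnished by tracial rank zero by passing to a relative commutant. First I would record that $A$ is finite: tracial rank zero implies stable finiteness, and a \suca\ that is stably finite is finite. Hence Lemma~\ref{L-2907FinNoy} applies, and it suffices, given $\ep > 0,$ elements $a_1, \ldots, a_n \in A,$ and $y \in A_+ \SM \{0\},$ to produce a completely noncommutative finite dimensional \ca~$D,$ a \pj~$e,$ and an injective unital \hm\ $\ph \colon D \to eAe$ satisfying only the commutation condition~(\ref{D-2907FinNoy-AC}) and the smallness condition~(\ref{D-2907FinNoy-Sm}); the norm condition of Definition~\ref{D-2902TrAppDiv} is then automatic.

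Next I would apply the definition of tracial rank zero (from~\cite{LnTAF}, \cite{LnTTR}) to $\{a_1, \ldots, a_n\},$ a tolerance $\dt \leq \ep$ to be chosen, and $y,$ obtaining a finite dimensional subalgebra $B \S A$ with unit $p$ such that $\| p a_k - a_k p \| < \dt,$ such that $\dist (p a_k p, B) < \dt,$ and such that $1 - p$ is \mvnt\ to a \pj\ in ${\overline{y A y}}.$ The key observation is that, although $B$ itself need not approximately commute with the $a_k,$ its relative commutant $B' \cap p A p$ does: if $z \in B' \cap p A p$ and $p a_k p$ lies within $\dt$ of an element of $B,$ then a short estimate (using $z = p z p,$ that $z$ commutes with that element of $B,$ and that $\| p a_k (1 - p) \| = \| [p, a_k] (1 - p) \| < \dt$) gives $\| z a_k - a_k z \|$ bounded by a fixed multiple of $\dt$ whenever $\| z \| \leq 1.$ Thus it suffices to find a completely noncommutative finite dimensional subalgebra $D \S B' \cap p A p$ whose unit $e$ makes $p - e$ small in trace; no centrality condition on $D$ inside the corner is then required, and $1 - e = (1 - p) + (p - e)$ will be \mvnt\ to a \pj\ in ${\overline{y A y}}$ by comparison.

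Now $B = \bigoplus_i M_{r_i}$ sits unitally in $p A p,$ so $B' \cap p A p \cong \bigoplus_i C_i,$ where $C_i$ is the corner of $p A p$ cut by a minimal \pj\ of the $i$-th block; each $C_i$ is again a simple, unital, infinite dimensional \ca\ with tracial rank zero. This elegantly dissolves the worry about abelian summands of $B$: a one dimensional block simply contributes a corner $q_i A q_i,$ and one works inside it just as in the matrix case. In each $C_i$ I would build a copy of $M_2$ of nearly full trace, namely choose orthogonal \mvnt\ \pj s $g, g' \in C_i$ with $g + g'$ of trace close to that of $1_{C_i}$ for every $\ta \in T (A),$ and set $D_i = C^*(g, g', v)$ for a partial isometry $v$ with $v^* v = g$ and $v v^* = g'.$ Taking $D = \bigoplus_i D_i$ and $e = 1_D$ then finishes the construction, in the spirit of the approximately central matrix units of~\cite{BKR}.

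The step I expect to be the main obstacle is the existence of such $g,$ that is, the \emph{uniform tracial halving}: one needs a \pj\ whose trace is within a prescribed $\eta$ of $\tfrac12 \ta(1_{C_i})$ simultaneously for all $\ta \in T (A),$ so that the remainder $1_{C_i} - g - g'$ is uniformly small in trace (pointwise smallness for individual traces would not suffice for the comparison step). This uniform divisibility is where the structure theory of simple tracial rank zero \ca s is genuinely used: real rank zero, Property~(SP), and strict comparison yield density of the image of $K_0$ in the affine functions on $T (A),$ from which orthogonal equivalent \pj s of nearly half trace are produced by comparison, the remaining verifications being routine.
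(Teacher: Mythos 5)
Your proposal is correct, and its overall architecture matches the paper's proof: reduce to Lemma~\ref{L-2907FinNoy} via finiteness of~$A,$ apply tracial rank zero to get a finite dimensional subalgebra with complement controlled by part of ${\overline{y A y}},$ and then place the completely noncommutative algebra~$D$ inside the relative commutant of that subalgebra in the corner, controlling the leftover \pj\  by trace comparison. Indeed, your identification $B' \cap p A p \cong \bigoplus_i C_i$ via the minimal corners, followed by spreading over matrix units, is exactly the paper's map $\ph_l (x) = \sum_{r} f_{r, 1}^{(l)} \ps_l (x) f_{1, r}^{(l)},$ and your commutator estimate through the element $c$ plays the role of the paper's $c_j = (1 - f) a_j (1 - f) + b_j.$ The one genuine divergence is the device used inside each minimal corner. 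The paper applies Lemma~2.3 of~\cite{OP1} with $m > 1/\rh$ large, obtaining $m$ orthogonal \mvnt\  \pj s whose complement is subequivalent to one of them; the remainder then automatically has trace at most $\ta \big( f_{1,1}^{(l)} \big) / m,$ so $D$ is a sum of copies of $M_m$ and no halving with prescribed accuracy is ever needed. You instead insist on $D_i \cong M_2$ and therefore need the uniform tracial halving you correctly flag as the crux, which you propose to extract from density of the image of $K_0 (A)$ in ${\mathrm{Aff}} (T (A))$; this is available for simple unital \ca s with tracial rank zero, but it is a heavier input than necessary, and in fact the cleanest source of your nearly-half-trace \pj\  $g$ is again Lemma~2.3 of~\cite{OP1} with large $m$ (take a sub-sum of roughly half of the $m$ equivalent pieces), so the paper's choice of large $m$ from the outset short-circuits precisely the step you identify as the obstacle. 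Two details to tidy: make the remainder in each corner small relative to $\ta \big( 1_{C_i} \big)$ (or at least account for the multiplicities $d (l)$ when summing over blocks) so that the total stays below $\inf_{\ta} \ta (q)$ after spreading over matrix units; and reserve two orthogonal nonzero \pj s $q_1, q_2 \in {\overline{y A y}}$ at the start, one absorbing $1 - p$ via the tracial rank zero condition and one absorbing $p - e$ via comparison, as the paper does.
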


\begin{proof}
We verify the condition of Lemma~\ref{L-2907FinNoy}.
So let $\ep > 0,$ let $n \in \N,$
let $a_1, a_2, \ldots, a_n \in A,$
and let $y \in A_{+} \SM \{ 0 \}.$
Since tracial rank zero implies real rank zero for simple \ca s,
we can use Lemma~1.10 of~\cite{PhT1}
to choose two nonzero orthogonal \pj s
$q_1, q_2 \in {\overline{y A y}}.$
Set $\rh = \inf_{\ta \in T (A)} \ta (q_1).$
Then $\rh > 0.$
Choose $m \in \N$ such that $m > \max ( 2, \, 1 / \rh).$
Then $M_m$
is a completely noncommutative finite dimensional C*-algebra.

Since $A$ has tracial rank zero,
by Theorem~6.13 of~\cite{LnTTR}
there are a \pj\  $f \in A$ and a unital finite dimensional
subalgebra $E \S f A f$ such that:
\begin{enumerate}
\item\label{P-2907TRZTAD-1}
$\| f a_j - a_j f \| < \frac{\ep}{6}$ for $j = 1, 2, \ldots, n.$
\item\label{P-2907TRZTAD-2}
For $j = 1, 2, \ldots, n$ there exists $b_j \in E$ such that
$\| f a_j f - b_j \| < \frac{\ep}{6}.$
\item\label{P-2907TRZTAD-3}
$1 - f \precsim q_2.$
\end{enumerate}
We may write $E \cong \bigoplus_{l = 1}^N M_{d (l)}$
for suitable $N \in \N$ and $d (1), \ldots, d (N) \in \N.$
Choose matrix units $f_{r, s}^{(l)} \in M_{d (l)}$
for $l = 1, 2, \ldots, N$ and $r, s = 1, 2, \ldots, d (l).$
Also let $f_l = \sum_{r = 1}^{d (l)} f_{r, r}^{(l)}$
be the identity of $M_{d (l)} \S E.$

Let
$( e_{j, k} )_{1 \leq j, k \leq m}$
be the standard system of matrix units in~$M_m.$
For $l = 1, 2, \ldots, N,$
apply Lemma~2.3 of~\cite{OP1} to $f_{1, 1}^{(l)} A f_{1, 1}^{(l)}.$
We obtain \mvnt\  \mops\  %
$p_{j}^{(l)} \leq f_{1, 1}^{(l)}$ for $j = 1, 2, \ldots, m,$
such that
\begin{equation}\label{Eq:2909GetP}
f_{1, 1}^{(l)} - \sum_{j = 1}^{m} p_{j}^{(l)}
               \precsim p_{1}^{(l)}.
\end{equation}
Extend $p_1^{(l)}, p_2^{(l)}, \ldots, p_m^{(l)}$
to a system of matrix units
$\big( g_{j, k}^{(l)} \big)_{1 \leq j, k \leq m}$
of type~$M_m$ in~$f_{1, 1}^{(l)} A f_{1, 1}^{(l)}$
such that $g_{j, j}^{(l)} = p_j^{(l)}$ for $j = 1, 2, \ldots, m.$
Then there is a \hm\  %
$\ps_l \colon M_m \to f_{1, 1}^{(l)} A f_{1, 1}^{(l)}$
such that $\ps_l (e_{j, k}) = g_{j, k}^{(l)}$
for $j, k = 1, 2, \ldots, d (l).$
Define $\ph_l \colon M_m \to f_l A f_l$
by
\[
\ph_l (x)
 = \sum_{r = 1}^{d (l)} f_{r, 1}^{(l)} \ps_l (x) f_{1, r}^{(l)}.
\]
For every $x \in M_m,$
the element $\ps_l (x)$ commutes
with every element of $M_{d (l)} \S E \S A.$
Moreover,
for every $\ta \in T (A)$ we have,
using~(\ref{Eq:2909GetP}) at the second step,
\[
\ta \big( f_{1, 1}^{(l)} - \ps_l (1) \big)
  \leq \ta \big( p_{1}^{(l)} \big)
  \leq \frac{ \ta \big( f_{1, 1}^{(l)} \big)}{m}.
\]
Therefore
$\ta ( f_l - \ph_l (1) ) \leq \ta (f_l) / m.$

Define a \hm\  $\ph \colon M_m \to A$
by $\ph (x) = \sum_{l = 1}^N \ph_l (x).$
Then $\ph (x)$ commutes
with every element of $E$ for all $x \in M_m.$
Moreover,
for every $\ta \in T (A),$
\[
\ta (f - \ph (1))
 = \sum_{l = 1}^N \ta ( f_l - \ph_l (1) )
 \leq \frac{1}{m} \sum_{l = 1}^N \ta (f_l)
 \leq \frac{1}{m}
 < \rh
 \leq \ta (q_2).
\]
Since traces determine the order on \pj s over~$A$
(by Corollary~5.7 and Theorem~6.8 of~\cite{LnTTR}),
it follows that $f - \ph (1) \precsim q_2.$
Since $1 - f \precsim q_1,$
it follows that $1 - \ph (1) \precsim q_1 + q_2 \in {\overline{y A y}}.$
Setting $e = \ph (1),$
we have proved condition~(\ref{D-2907FinNoy-Sm})
of Lemma~\ref{L-2907FinNoy}.

It remains to prove condition~(\ref{D-2907FinNoy-AC})
of Lemma~\ref{L-2907FinNoy}.
For $j = 1, 2, \ldots, n,$
let $b_j \in E$ be as in~(\ref{P-2907TRZTAD-2}).
Set $c_j = (1 - f) a_j (1 - f) + b_j.$
Then
\begin{align*}
\| c_j - a_j \|
& \leq \| (1 - f) a_j f \| + \| f a_j (1 - f) \| + \| b_j - f a_j f \|
   \\
& \leq 2 \| f a_j - a_j f \| + \| b_j - f a_j f \|
  < \frac{2 \ep}{6} + \frac{\ep}{6}
  = \frac{\ep}{2}.
\end{align*}
Now let $x \in M_m$ satisfy $\| x \| \leq 1.$
Then $\ph (x) \in f A f$ and $\ph (x)$ commutes with~$b_j.$
So $\ph (x)$ commutes with~$c_j.$
Therefore
\[
\| \ph (x) a_j - a_j \ph (x) \|
  \leq 2 \| c_j - a_j \|
  < \ep.
\]
This completes the proof.
\end{proof}

\begin{thm}\label{InnMain}
Let $A$ be a simple separable infinite dimensional
unital \ca\  with tracial rank zero.
Then there is a dense $G_{\dt}$-set $G \S \Innb (A)$
such that every $\af \in G$ has the \trp.
\end{thm}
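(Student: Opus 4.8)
The plan is to obtain this as an immediate consequence of Theorem~\ref{InnTAD}, whose conclusion is word-for-word what we want, by verifying that the stronger hypotheses there are automatic under tracial rank zero. The algebra $A$ is already assumed to be simple, separable, infinite dimensional, and unital, so the only two conditions of Theorem~\ref{InnTAD} that require checking are tracial approximate divisibility and the requirement that the order on \pj s over~$A$ be determined by traces in the sense of Definition~\ref{D-2910OrdPj}.

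First I would invoke Proposition~\ref{P-2907TRZTAD}, which asserts precisely that a simple separable infinite dimensional unital \ca\ with tracial rank zero is tracially approximately divisible. This settles the first hypothesis directly, with no additional argument. For the second hypothesis, I would appeal to the fact that for a simple separable unital \ca\ with tracial rank zero the order on \pj s over~$A$ is determined by traces. This is not a new fact: it is already used inside the proof of Proposition~\ref{P-2907TRZTAD}, where it is attributed to Corollary~5.7 and Theorem~6.8 of~\cite{LnTTR}, and it is flagged again in the discussion following Definition~\ref{D-2910OrdPj}. Hence both ingredients needed to apply Theorem~\ref{InnTAD} are in hand.

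With both hypotheses confirmed, Theorem~\ref{InnTAD} applies verbatim and yields a dense $G_{\dt}$-set $G \S \Innb (A)$ every element of which has the \trp, which is exactly the assertion. I do not expect any genuine obstacle here: all the substantive work—the density argument of Lemma~\ref{InnApprox}, the $G_{\dt}$ structure from Lemmas~\ref{Intersect} and~\ref{Open}, and the passage from tracial rank zero to the divisibility and comparison properties—has already been carried out in the preceding development. The sole point to make is that tracial rank zero is strong enough to supply simultaneously the tracial approximate divisibility (via Proposition~\ref{P-2907TRZTAD}) and the trace-determined order on \pj s (via the cited results of~\cite{LnTTR}), so that the general theorem for tracially approximately divisible algebras specializes cleanly to this case.
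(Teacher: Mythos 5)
Your proposal is correct and follows exactly the paper's own proof: invoke Proposition~\ref{P-2907TRZTAD} for tracial approximate divisibility, cite the results of~\cite{LnTTR} to get that traces determine the order on projections, and then apply Theorem~\ref{InnTAD}. (The paper additionally cites Theorem~5.8 of~\cite{LnTTR} alongside Corollary~5.7 and Theorem~6.8, but this is an inessential difference.)
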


\begin{proof}
The algebra $A$ is tracially approximately divisible
by Proposition~\ref{P-2907TRZTAD}.
It follows from Corollary~5.7 and Theorems~5.8 and~6.8 of~\cite{LnTTR}
that traces determine the order on \pj s over~$A.$
So Theorem~\ref{InnTAD} applies.
\end{proof}

\section{Purely infinite simple C*-algebras}\label{Sec:PISCA}

\indent
In this section,
we consider the Rokhlin property
rather than the tracial Rokhlin property.
We prove that it is generic for actions of $\Z$ or $\Z^d$
on a unital \ca\  which tensorially absorbs~${\mathcal{O}}_{\I}$
or a UHF algebra of infinite type.
Essentially the same proof shows that if $G$ is a finite group
with $r$~elements,
then the Rokhlin property is generic
for actions of $G$
on a unital \ca\  which tensorially absorbs
the $r^{\I}$~UHF algebra.

We give the proof in detail for actions of $\Z^d$
on ${\mathcal{O}}_{\I}$-absorbing \ca s for $d \geq 2.$
Because of technical differences in the definitions,
a combinatorial argument (see Lemma~\ref{R-2909RZdRZ})
is needed to show that this definition in the case $d = 1$
implies the usual definition for actions of~$\Z.$
(In the simple case,
the reverse implication is unknown.)

We recall the definition of the Rokhlin property for actions of~$\Z^d$
from~\cite{Nk1}.
However,
we split it in several parts for convenient reference in our arguments.

\begin{ntn}\label{N-2908-ZdN}
Let $d \in \N.$
We say that
$m \in \Z^d$
is {\emph{strictly positive}}
if $m_k > 0$ for $k = 1, 2, \ldots, d.$
We then denote by $m \Z^d$ the set
$\prod_{k = 1}^d m_k \Z \S \Z^d.$
As convenient,
we identify $\Z^d / m \Z^d$
with $\prod_{k = 1}^d m_k \Z / m_k \Z$
or as
\[
\Z^d / m \Z^d
 = \big\{ n \in \Z^d \colon
        {\mbox{$0 \leq n_k < m_k$ for $k = 1, 2, \ldots, d$}} \big\}.
\]
We further let $\dt_1, \dt_2, \ldots, \dt_d \in \Z^d$
be the standard basis vectors for $\R^d.$
For $r \in \Z^d$ and $l \in \Z^d / m \Z^d,$
we interpret $r + l$ as the element of $\Z^d / m \Z^d$
gotten as the sum of $l$ and the image of $r$ in $\Z^d / m \Z^d.$
\end{ntn}

\begin{dfn}\label{D-2908-ZdOneRT}
Let $A$ be a unital \ca,
let $d \in \N,$
and let $\af \colon \Z^d \to \Aut (A).$
Let $F \S A$ be finite, let $\ep > 0,$
and let and let $m \in \Z^d$
be strictly positive.
Then an {\emph{$(m, F, \ep)$-Rokhlin tower for~$\af$}}
is a family $(e_l)_{l \in \Z^d / m \Z^d}$
of \mops\  such that:
\begin{enumerate}
\item\label{D-2908-ZdOneRT-Cm}
$\| e_l a - a e_l \| < \ep$ for all $a \in F$ and $l \in \Z^d / m \Z^d.$
\item\label{D-2908-ZdOneRT-Tr}
$\| \af_{\dt_k} (e_l) - e_{l + \dt_k} \| < \ep$
for all $l \in \Z^d / m \Z^d$ and for $k = 1, 2, \ldots, d.$
\end{enumerate}
We say it is an {\emph{exact $(m, F)$-Rokhlin tower for~$\af$}}
if the norms in (\ref{D-2908-ZdOneRT-Cm}) and~(\ref{D-2908-ZdOneRT-Tr})
are all zero.
Its {\emph{support}} is defined to be $\sum_{l \in \Z^d / m \Z^d} e_l.$
\end{dfn}

\begin{dfn}\label{D-2908-ZdRTS}
Let $A$ be a unital \ca,
let $d \in \N,$
and let $\af \colon \Z^d \to \Aut (A).$
Let $F \S A$ be finite and let $\ep > 0.$
An {\emph{$(F, \ep)$-system of Rokhlin towers for~$\af$}}
consists of $s \in \N,$
an $s$-tuple $m = \left( m^{(1)}, m^{(2)}, \ldots, m^{(s)} \right)$
of strictly positive elements of~$\Z^d,$
and $(m^{(j)}, F, \ep)$-Rokhlin towers for~$\af$
for $j = 1, 2, \ldots, s$
whose supports $f_j$ are orthogonal
and satisfy $\sum_{j = 1}^s f_j = 1.$
We call such a system
an {\emph{exact $F$-system of Rokhlin towers for~$\af$}}
if the constituent Rokhlin towers are all
exact $(m^{(j)}, F)$-Rokhlin towers for~$\af.$
The {\emph{size}} of such a system is~$m.$
Its {\emph{minimum height}},
or the minimum height
of $\left( m^{(1)}, m^{(2)}, \ldots, m^{(s)} \right),$
is
\[
\min \big( \big\{ m^{(j)}_k \colon
   {\mbox{$1 \leq j \leq s$ and $1 \leq k \leq d$}} \big\} \big).
\]
\end{dfn}

\begin{dfn}[Definition~1 of~\cite{Nk1}]\label{D-2908-Zd-RP}
Let $A$ be a separable unital \ca,
let $d \in \N,$
and let $\af \colon \Z^d \to \Aut (A).$
Then $\af$ has the
{\emph{Rokhlin property}}
if for every $N \in \N$
there are $s \in \N$
and an $s$-tuple $m = \left( m^{(1)}, m^{(2)}, \ldots, m^{(s)} \right)$
of strictly positive elements of~$\Z^d$
with minimum height at least~$N$
such that for every finite subset $F \S A$ and every $\ep > 0$
there is an $(F, \ep)$-system of Rokhlin towers for~$\af$
of size~$m.$
\end{dfn}

For comparison,
we recall what is now the usual definition
of the Rokhlin property
for a single automorphism of a \ca.

\begin{dfn}[Definition~2.5 of~\cite{Iz0}]\label{D-IzumiRP}
Let $A$ be a separable unital \ca,
and let $\af \in \Aut (A).$
We say that $\af$ has the
{\emph{Rokhlin property}}
if for every $\ep > 0,$ every finite subset $F \S A,$
and every $n \in \N,$
there are \mops\  %
$e_0, \, e_1, \, \ldots, \, e_{n - 1},
       \, f_0, \, f_1, \, \ldots, \, f_n \in A$
such that:
\begin{enumerate}
\item\label{Pf-2909RZdRZ-1}
$\| \af (e_l) - e_{l + 1} \| < \ep$ for $l = 0, 1, \ldots, n - 2$
and $\| \af (f_l) - f_{l + 1} \| < \ep$ for $l = 0, 1, \ldots, n - 1.$
\item\label{Pf-2909RZdRZ-2}
$\| e_l a - a e_l \| < \ep$
for $l = 0, 1, \ldots, n - 1$ and all $a \in F,$
and
$\| f_l a - a f_l \| < \ep$ for $l = 0, 1, \ldots, n$ and all $a \in F.$
\item\label{Pf-2909RZdRZ-3}
$\sum_{l = 0}^{n - 1} e_l + \sum_{l = 0}^{n} f_l = 1.$
\end{enumerate}
\end{dfn}

Informally,
the Rokhlin property of Definition~\ref{D-2908-Zd-RP}
for actions of~$\Z$
requires that the top \pj\  of each tower
be sent by the automorphism
to a \pj\  close to the base \pj\  of the same tower.
However,
the Rokhlin property of Definition~2.5 of~\cite{Iz0},
while more restrictive about the sizes of towers,
only requires that the sum of the top \pj s of the towers
be sent by the automorphism to a \pj\  close to the
sum of the base \pj s of the towers.
We know of no proof that this version implies
the Rokhlin property of Definition~\ref{D-2908-Zd-RP},
but, at least for simple \ca s,
we know of no counterexamples either.

It is known that the Rokhlin property of
Definition~\ref{D-2908-Zd-RP} for $d = 1$
implies that of Definition~\ref{D-IzumiRP},
but it seems not to have been
explicitly written down anywhere.
So we give a proof here.
It uses a tower partitioning argument.
Several somewhat related proofs appear in Section~1 of~\cite{OP1}.
(Strictly speaking,
we can avoid this result,
since the proofs we give,
restricted to $d = 1,$
already give towers of the form in Definition~\ref{D-2908-Zd-RP}.)

\begin{lem}\label{R-2909RZdRZ}
For $d = 1,$
the Rokhlin property of Definition~\ref{D-2908-Zd-RP}
implies the Rokhlin property of Definition~\ref{D-IzumiRP}.
\end{lem}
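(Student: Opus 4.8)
The plan is to run a tower-partitioning argument, exactly as hinted in the surrounding text. The towers of Definition~\ref{D-2908-Zd-RP} for $d = 1$ are \emph{cyclic}: condition~(\ref{D-2908-ZdOneRT-Tr}) of Definition~\ref{D-2908-ZdOneRT} holds for all $l \in \Z / m \Z$, including the wrap-around $\| \af (e_{m - 1}) - e_0 \| < \ep$. The goal of Definition~\ref{D-IzumiRP}, by contrast, is a single height-$n$ tower $(e_0, \ldots, e_{n - 1})$ and a single height-$(n + 1)$ tower $(f_0, \ldots, f_n)$ whose levels sum to~$1$ and which are \emph{not} cyclic. So I would cut each cyclic tower into consecutive linear blocks of heights $n$ and $n + 1$, discard the wrap-around relation, and then assemble all blocks of a given height into one tower by summing corresponding levels.

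The combinatorial input comes first. Given the data $\ep > 0$, finite $F \S A$, and $n \in \N$ of Definition~\ref{D-IzumiRP}, note that since $\gcd (n, n + 1) = 1$ every integer $M \geq n^2$ is of the form $M = a n + b (n + 1)$ with $a, b \in \Nz$: writing $M = q n + r$ with $0 \leq r \leq n - 1$ gives $M = (q - r) n + r (n + 1)$, and $q = \lfloor M / n \rfloor \geq n - 1 \geq r$. I therefore set $N = n^2$ and apply Definition~\ref{D-2908-Zd-RP} with this~$N$, obtaining $s \in \N$ and strictly positive $m^{(1)}, \ldots, m^{(s)} \geq N$ such that $(F', \ep')$-systems of Rokhlin towers of this fixed size exist for all $F', \ep'$. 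For each $j$ I fix $a_j, b_j \in \Nz$ with $m^{(j)} = a_j n + b_j (n + 1)$, and set $K = \sum_{j = 1}^s (a_j + b_j)$, the total number of sub-towers. The crucial point is that $s$, the heights $m^{(j)}$, and hence $K$ are all determined \emph{before} the $\ep'$ quantifier.

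Now choose $\ep' > 0$ small (depending on $K$ and $\ep$), apply the system-existence statement to $F$ and $\ep'$, and obtain cyclic towers $(e_l^{(j)})_{l \in \Z / m^{(j)} \Z}$ with orthogonal supports $f_j$ summing to~$1$. Partition $\{ 0, 1, \ldots, m^{(j)} - 1 \}$ into $a_j$ consecutive blocks of length $n$ and $b_j$ of length $n + 1$; each block is a linear sub-tower whose interior shift relations $\| \af (p_r) - p_{r + 1} \| < \ep'$ and commutation relations are inherited from the cyclic tower. Discarding the cyclic relation at each block top is harmless, because Definition~\ref{D-IzumiRP} imposes no shift condition on $\af (e_{n - 1})$ or $\af (f_n)$. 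Finally, let $e_l$ be the sum over all height-$n$ sub-towers of their $l$-th projection, and $f_l$ the analogous sum over height-$(n + 1)$ sub-towers. These are \mops, and $\sum_{l = 0}^{n - 1} e_l + \sum_{l = 0}^n f_l$ equals the sum of all the $f_j$, hence~$1$, giving condition~(\ref{Pf-2909RZdRZ-3}).

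The one delicate point, and the main obstacle, is that summing $K$ sub-towers might amplify the errors in the shift and commutation estimates. This is controlled by combining the fixedness of~$K$ with orthogonality. For the commutation estimate, write $e_l a - a e_l = e_l a (1 - e_l) - (1 - e_l) a e_l$; if $p_i$ denotes the $l$-th projection of the $i$-th height-$n$ sub-tower, then $p_i \leq e_l$ and $p_i p_{i'} = 0$ for $i \neq i'$, so the elements $z_i = [p_i, a] (1 - e_l) = p_i a (1 - e_l)$ satisfy $z_i^* z_{i'} = 0$ and $\| z_i \| < \ep'$, whence $\| e_l a (1 - e_l) \|^2 = \big\| \sum_i z_i^* z_i \big\| \leq \sum_i \| z_i \|^2 \leq K \ep'^2$; the term $(1 - e_l) a e_l$ is bounded the same way using orthogonal ranges, and the shift term $\af (e_l) - e_{l + 1} = \sum_i (\af (p_i) - p_{i + 1})$ is handled identically after noting that $\af (e_l)$ and $e_{l + 1}$ are genuine projections. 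Thus every quantity in conditions~(\ref{Pf-2909RZdRZ-1}) and~(\ref{Pf-2909RZdRZ-2}) is at most a fixed multiple of $\sqrt{K}\, \ep'$ (and the $f$-tower, built from at most $K$ sub-towers, obeys the same bound), so choosing $\ep' < \ep / (2 \sqrt{K})$ makes all of them $< \ep$ and completes the verification.
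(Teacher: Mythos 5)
Your proof is correct and follows essentially the same route as the paper: cut each cyclic tower into consecutive linear blocks of heights $n$ and $n+1$ (using the same observation that every $M \geq n^2$ lies in $n\Nz + (n+1)\Nz$, and that the tower sizes are fixed before the $\ep$-quantifier in Definition~\ref{D-2908-Zd-RP}), then sum corresponding levels. The only difference is cosmetic: the paper bounds the accumulated errors by a plain triangle inequality (giving $K\ep'$ rather than your sharper $\sqrt{K}\,\ep'$ from orthogonality), which already suffices since $K$ is known in advance.
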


\begin{proof}
Write $\af$ for the automorphism generating the action
as well as the action.
Let $\ep > 0,$ let $F \S A$ be finite, and let $n \in \N.$

Choose $t \in \N$ and
a $t$-tuple $m = \left( m^{(1)}, m^{(2)}, \ldots, m^{(t)} \right)$
of strictly positive elements of~$\Z$
with minimum height at least~$n^2$
and such that for every finite subset $S \S A$ and every $\dt > 0$
there is an $(S, \dt)$-system of Rokhlin towers for~$\af$
of size~$m.$
For $j = 1, 2, \ldots, t,$
since $m^{(j)} \geq n^2,$
there are $r_j, s_j \in \Nz$
such that $m^{(j)} = r_j n + s_j (n + 1).$
Therefore
there are intervals $I_{j, k} \S \Nz$ for $k = 1, 2, \ldots, r_j$
with exactly $n$ elements
and intervals $J_{j, k} \S \Nz$ for $k = 1, 2, \ldots, s_j$
with exactly $n + 1$ elements
such that
\[
\big\{ 0, 1, \ldots, m^{(j)} - 1 \big\}
 = \coprod_{k = 1}^{r_j} I_{j, k} \amalg \coprod_{k = 1}^{s_j} J_{j, k}.
\]
Thus, for $k = 1, 2, \ldots, r_j$ there is $\mu_{j, k} \in \Nz$
such that
\[
I_{j, k} = \big\{ \mu_{j, k}, \, \mu_{j, k} + 1, \, \ldots,
                   \mu_{j, k} + n - 1 \big\}
\]
and for $k = 1, 2, \ldots, s_j$ there is $\nu_{j, k} \in \Nz$
such that
\[
J_{j, k} = \big\{ \nu_{j, k}, \, \nu_{j, k} + 1, \, \ldots,
                   \nu_{j, k} + n \big\}.
\]

Set
\[
r = \sum_{j = 1}^t r_j,
\,\,\,\,\,\,
s = \sum_{j = 1}^t s_j,
\andeqn
\ep_0 = \frac{\ep}{\max (r, s)}.
\]
Choose an $(F, \ep_0)$-system of Rokhlin towers for~$\af$
of size~$m,$
say
\[
R = \left( \big( p_0^{(j)}, \, p_1^{(j)}, \, \ldots,
  \, p_{m^{(j)} - 1}^{(j)} \big) \right)_{j = 1}^t.
\]
Define \pj s $e_l$
for $l = 0, 1, \ldots, n - 1$
and $f_l$ for $l = 0, 1, \ldots, n$ by
\[
e_l = \sum_{j = 1}^t \sum_{k = 1}^{r_j} p_{\mu_{j, k} + l}^{(j)}
\andeqn
f_l = \sum_{j = 1}^t \sum_{k = 1}^{s_j} p_{\nu_{j, k} + l}^{(j)}.
\]
One easily checks that each \pj\  in the system~$R$
has been used exactly once,
which implies~(\ref{Pf-2909RZdRZ-3}) above.
For~(\ref{Pf-2909RZdRZ-1}),
for $l = 0, 1, \ldots, n - 2$
we estimate
\[
\| \af (e_l) - e_{l + 1} \|
 \leq \sum_{j = 1}^t \sum_{k = 1}^{r_j}
    \big\| \af \big( p_{\mu_{j, k} + l}^{(j)} \big)
            - p_{\mu_{j, k} + l + 1}^{(j)} \big\|
 < r \ep_0
 \leq \ep.
\]
A similar calculation gives
$\| \af (f_l) - f_{l + 1} \| < s \ep_0 \leq \ep.$
For~(\ref{Pf-2909RZdRZ-2}),
for $l = 0, 1, \ldots, n - 1$ and $a \in F$
we estimate
\[
\| e_l a - a e_l \|
 \leq \sum_{j = 1}^t \sum_{k = 1}^{s_j}
    \big\| p_{\nu_{j, k} + l}^{(j)} a
              - a p_{\nu_{j, k} + l}^{(j)} \big\|
 < r \ep_0
 \leq \ep
\]
and similarly
for $l = 0, 1, \ldots, n$ and $a \in F$
we get
$\| f_l a - a f_l \| < s \ep_0 \leq \ep.$
This completes the proof.
\end{proof}

\begin{ntn}\label{N-2910Act}
Let $A$ be a separable unital \ca,
and let $d \in \N.$
We denote by ${\mathrm{Act}}_d (A)$
the set of actions of $\Z^d$ on~$A.$
We identify ${\mathrm{Act}}_d (A)$ with a subset
of $\Aut (A)^d$
via the map
$\af \mapsto
 \big( \af_{\dt_1}, \, \af_{\dt_2}, \ldots, \, \af_{\dt_d} \big).$
For $\af, \bt \in {\mathrm{Act}}_d (A)$
and any enumeration $S = (a_1, a_2, \ldots )$ of a countable dense
subset of~$A,$
we let $\rh_S$ be as in Notation~\ref{N-2910ZMetric},
and define
\[
\rh_{d, S} (\af, \bt)
 = \max \big( \rh_S ( \af_{\dt_1}, \bt_{\dt_1} ), \,
              \rh_S ( \af_{\dt_2}, \bt_{\dt_2} ), \,
              \ldots, \,
              \rh_S ( \af_{\dt_d}, \bt_{\dt_d} ) \big).
\]
\end{ntn}

\begin{lem}\label{L-2910PIisMet}
For any $S$ as in Notation~\ref{N-2910Act},
the function $\rh_{d, S}$
is a complete metric on ${\mathrm{Act}}_d (A)$
which induces the restriction to ${\mathrm{Act}}_d (A)$
of the product topology on $\Aut (A)^d.$
\end{lem}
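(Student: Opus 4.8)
The plan is to reduce everything to the single-automorphism case already treated in Lemma~\ref{L:AutTop}. That $\rh_{d, S}$ is a metric is essentially immediate, since under the identification of Notation~\ref{N-2910Act} it is the maximum of the $d$ pullbacks of the complete metric $\rh_S$ along the coordinate projections $\af \mapsto \af_{\dt_k}$, and a finite maximum of metrics is again a metric. The only point worth noting is that it separates points of ${\mathrm{Act}}_d (A)$: if $\rh_{d, S} (\af, \bt) = 0$ then $\af_{\dt_k} = \bt_{\dt_k}$ for each $k,$ and since $\Z^d$ is free abelian on $\dt_1, \ldots, \dt_d,$ an action is determined by its values on these generators, so $\af = \bt.$

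Next I would verify that $\rh_{d, S}$ induces the restriction of the product topology. By Lemma~\ref{L:AutTop}, each factor metric $\rh_S$ induces the topology of pointwise norm convergence on $\Aut (A)$; a finite maximum of metrics induces the product topology on a finite product, so $\rh_{d, S}$ induces the product topology on $\Aut (A)^d,$ and hence its restriction on ${\mathrm{Act}}_d (A).$ Concretely, $\af^{(i)} \to \af$ in $\rh_{d, S}$ \ifo\  $\af^{(i)}_{\dt_k} \to \af_{\dt_k}$ pointwise in norm for each $k.$

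The substantive point is completeness, and the plan is to exhibit ${\mathrm{Act}}_d (A)$ as a closed subset of the complete metric space $\big( \Aut (A)^d, \, \max_k \rh_S \big)$; a closed subspace of a complete metric space is complete. Completeness of the product follows from completeness of each factor (Lemma~\ref{L:AutTop}) together with the fact that the maximum metric makes a finite product of complete spaces complete. Under the identification of Notation~\ref{N-2910Act}, a tuple $(\bt_1, \ldots, \bt_d) \in \Aut (A)^d$ lies in ${\mathrm{Act}}_d (A)$ exactly when $\bt_j \bt_k = \bt_k \bt_j$ for all $j, k,$ since any choice of pairwise commuting automorphisms of the generators of $\Z^d$ extends uniquely to an action. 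So it suffices to check that these commutation relations are preserved under limits in the product topology.

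The key observation making this work is that composition is jointly continuous for pointwise norm convergence on $\Aut (A)$: if $\bt_j^{(i)} \to \bt_j$ and $\bt_k^{(i)} \to \bt_k$ pointwise, then for each $a \in A$ we have
\[
\big\| \bt_j^{(i)} \bt_k^{(i)} (a) - \bt_j \bt_k (a) \big\|
 \leq \big\| \bt_k^{(i)} (a) - \bt_k (a) \big\|
        + \big\| \bt_j^{(i)} (\bt_k (a)) - \bt_j (\bt_k (a)) \big\|,
\]
where the first term uses that $\bt_j^{(i)}$ is isometric, and both terms tend to~$0.$ Applying this to a $\rh_{d, S}$-convergent sequence of tuples in ${\mathrm{Act}}_d (A)$ shows $\bt_j^{(i)} \bt_k^{(i)} \to \bt_j \bt_k$ and $\bt_k^{(i)} \bt_j^{(i)} \to \bt_k \bt_j$ pointwise, whence $\bt_j \bt_k = \bt_k \bt_j,$ so the limit tuple again lies in ${\mathrm{Act}}_d (A).$ I expect this joint-continuity step — in particular the need to exploit isometry of the automorphisms to control the ``moving'' inner factor — to be the only place requiring real care; the remainder is formal bookkeeping with finite products of metric spaces.
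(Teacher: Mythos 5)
Your proposal is correct and follows exactly the route the paper takes: exhibit ${\mathrm{Act}}_d (A)$ as a closed subset of $\Aut (A)^d$ with the maximum metric and invoke Lemma~\ref{L:AutTop} for each factor. The paper merely asserts closedness as ``easy to check,'' whereas you supply the detail (joint continuity of composition via isometry of automorphisms, so the commutation relations pass to limits), which is the right justification.
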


\begin{proof}
It is easy to check that ${\mathrm{Act}}_d (A)$
is a closed subset of $\Aut (A)^d.$
The rest is immediate from Lemma~\ref{L:AutTop}.
\end{proof}

\begin{ntn}\label{N-2910PISetNtn}
Let $A$ be a unital \ca,
and let $d \in \N.$
For a finite set $F \S A,$ for $\ep > 0,$ for $s \in \N,$
and for
an $s$-tuple $m = \left( m^{(1)}, m^{(2)}, \ldots, m^{(s)} \right)$
of strictly positive elements of~$\Z^d,$
we define $W_d (F, \ep, s, m) \S {\mathrm{Act}}_d (A)$
to be the set of all
$\af \in {\mathrm{Act}}_d (A)$ such that
there is an $(F, \ep)$-system of Rokhlin towers for~$\af$
of size~$m.$
\end{ntn}

\begin{lem}\label{L-2910PIOpen}
Let $A$ be a unital \ca,
and let $d \in \N.$
For every finite set $F \S A,$ every $\ep > 0,$ every $s \in \N,$
and every $s$-tuple
$m = \left( m^{(1)}, m^{(2)}, \ldots, m^{(s)} \right)$
of strictly positive elements of~$\Z^d,$
the set $W_d (F, \ep, s, m)$ is open in ${\mathrm{Act}}_d (A).$
\end{lem}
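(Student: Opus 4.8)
The plan is to follow the proof of Lemma~\ref{Open} almost verbatim, the only additional bookkeeping being the need to handle several towers and the $d$ generators $\af_{\dt_1}, \ldots, \af_{\dt_d}$ simultaneously. First I would fix $\af \in W_d (F, \ep, s, m)$ together with an $(F, \ep)$-system of Rokhlin towers witnessing this membership, consisting of $(m^{(j)}, F, \ep)$-Rokhlin towers $\big( e_l^{(j)} \big)_{l \in \Z^d / m^{(j)} \Z^d}$ for $j = 1, 2, \ldots, s,$ whose supports $f_j = \sum_l e_l^{(j)}$ are orthogonal and satisfy $\sum_{j = 1}^s f_j = 1.$

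The key observation, exactly as in Lemma~\ref{Open}, is that among the conditions defining an $(F, \ep)$-system of Rokhlin towers (Definitions~\ref{D-2908-ZdOneRT} and~\ref{D-2908-ZdRTS}), only the translation condition~\ref{D-2908-ZdOneRT-Tr} refers to the action. The commutation condition~\ref{D-2908-ZdOneRT-Cm}, the mutual orthogonality of the $e_l^{(j)}$ within each tower, the orthogonality of the supports $f_j,$ and the relation $\sum_{j=1}^s f_j = 1$ are all properties of the fixed projections alone, hence remain valid if $\af$ is replaced by any other action. I would therefore set $\et$ to be the minimum, over the finitely many triples $(j, l, k)$ with $1 \leq j \leq s,$ $l \in \Z^d / m^{(j)} \Z^d,$ and $1 \leq k \leq d,$ of the quantities $\ep - \big\| \af_{\dt_k} \big( e_l^{(j)} \big) - e_{l + \dt_k}^{(j)} \big\|.$ Because each of the inequalities in condition~\ref{D-2908-ZdOneRT-Tr} is strict and there are only finitely many of them, $\et > 0.$ A single use of the triangle inequality then shows that if $\bt \in {\mathrm{Act}}_d (A)$ satisfies $\big\| \bt_{\dt_k} \big( e_l^{(j)} \big) - \af_{\dt_k} \big( e_l^{(j)} \big) \big\| < \et$ for all such $(j, l, k),$ then $\big\| \bt_{\dt_k} \big( e_l^{(j)} \big) - e_{l + \dt_k}^{(j)} \big\| < \ep,$ so the very same projections form an $(F, \ep)$-system of Rokhlin towers for $\bt$ of size $m,$ giving $\bt \in W_d (F, \ep, s, m).$

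It remains to see that the set of $\bt$ satisfying these finitely many strict norm inequalities is an open neighborhood of $\af.$ By Lemmas~\ref{L:AutTop} and~\ref{L-2910PIisMet}, the topology on ${\mathrm{Act}}_d (A)$ is that of pointwise norm convergence of each generator, so every evaluation map $\bt \mapsto \bt_{\dt_k} \big( e_l^{(j)} \big)$ is continuous from ${\mathrm{Act}}_d (A)$ into $A.$ The desired neighborhood is then the finite intersection, over the triples $(j, l, k),$ of the preimages of the open balls of radius $\et$ about $\af_{\dt_k} \big( e_l^{(j)} \big),$ and it is contained in $W_d (F, \ep, s, m)$ by the previous paragraph. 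I expect no real obstacle here; the only points needing care are the identification of condition~\ref{D-2908-ZdOneRT-Tr} as the unique action-dependent requirement, and the observation that $\et > 0$ precisely because only finitely many strict inequalities are involved.
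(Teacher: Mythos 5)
Your proposal is correct and follows exactly the route the paper intends: the paper's proof of this lemma simply says it is similar to that of Lemma~\ref{Open}, and your argument is the straightforward adaptation of that proof, isolating condition~(\ref{D-2908-ZdOneRT-Tr}) as the only action-dependent requirement and using the strictness of finitely many inequalities to produce the radius $\et > 0.$ No further comment is needed.
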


\begin{proof}
The proof is similar to that of Lemma~\ref{Open}.
\end{proof}

\begin{lem}\label{L-2910PIIntersect}
Let $A$ be a separable unital \ca,
and let $d \in \N.$
Let $(s_n)_{n \in \N}$ be a sequence in~$\N,$
and for $n \in \N$ let $m (n)$ be an $s_n$-tuple
of strictly positive elements of $\Z^d$ with minimum height $N_n.$
Suppose that $\limi{n} N_n = \I.$
Then the set of $\af \in {\mathrm{Act}}_d (A)$
which have the Rokhlin property
contains the
countable intersection of sets of the form
$W_d (F, \ep, s_n, m (n))$
for finite sets $F \S A,$
numbers $\ep > 0,$
and integers $n \in \N.$
\end{lem}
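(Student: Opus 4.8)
The plan is to build the required countable intersection by hand and then match it against the quantifier structure of Definition~\ref{D-2908-Zd-RP}. First I would fix an enumeration $S = (a_1, a_2, \ldots)$ of a countable dense subset of~$A$ and let $\mathcal{F}$ denote the (countable) collection of all finite subsets of~$S.$ Then I would take
\[
G = \bigcap_{n = 1}^{\I} \bigcap_{F \in \mathcal{F}} \bigcap_{k = 1}^{\I}
      W_d \left( F, \tfrac{1}{k}, s_n, m (n) \right),
\]
which is a countable intersection of sets of the allowed form. It then remains only to prove that every $\af \in G$ has the Rokhlin property.

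So suppose $\af \in G$ and let $N \in \N.$ Since $\limi{n} N_n = \I,$ I would choose $n \in \N$ with $N_n \geq N.$ The size $m (n)$ is then an $s_n$-tuple of strictly positive elements of~$\Z^d$ of minimum height $N_n \geq N,$ so it is a legitimate candidate for the size demanded in Definition~\ref{D-2908-Zd-RP}. According to that definition, what must be verified is that, with this \emph{single} fixed size $m (n),$ for every finite set $F \S A$ and every $\ep > 0$ there is an $(F, \ep)$-system of Rokhlin towers for~$\af$ of size $m (n);$ that is, $\af \in W_d (F, \ep, s_n, m (n)).$

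Membership of $\af$ in $G$ gives this directly only for $F \in \mathcal{F}$ and $\ep$ of the form $\tfrac{1}{k},$ so the remaining step is a routine density and perturbation argument. Given arbitrary finite $F \S A$ and $\ep > 0,$ I would choose $k \in \N$ and a finite set $F' \in \mathcal{F}$ so that each $a \in F$ lies within $\dt$ of some $a' \in F',$ with $\dt$ and $\tfrac{1}{k}$ small enough that $2 \dt + \tfrac{1}{k} < \ep.$ Since $\af \in W_d (F', \tfrac{1}{k}, s_n, m (n)),$ there is an $(F', \tfrac{1}{k})$-system of Rokhlin towers for~$\af$ of size $m (n).$ The very same system is an $(F, \ep)$-system: the orthogonality of the towers, the condition $\sum_j f_j = 1,$ and the translation estimates of Definition~\ref{D-2908-ZdOneRT}(\ref{D-2908-ZdOneRT-Tr}) do not involve $F$ at all and are therefore unchanged, while for each tower \pj~$e_l$ and each $a \in F,$ picking $a' \in F'$ with $\| a - a' \| < \dt$ and using $\| e_l \| \leq 1$ gives
\[
\| e_l a - a e_l \|
 \leq 2 \| a - a' \| + \| e_l a' - a' e_l \|
 < 2 \dt + \tfrac{1}{k}
 < \ep.
\]
Thus $\af \in W_d (F, \ep, s_n, m (n))$ for every $F$ and $\ep,$ and since $N$ was arbitrary, $\af$ has the Rokhlin property.

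There is no serious obstacle here. The only real content is arranging the three layers of quantifiers so that the \emph{size} $m (n)$ is fixed \emph{before} $F$ and $\ep$ are quantified, exactly as Definition~\ref{D-2908-Zd-RP} requires, together with the observation that $N_n \to \I$ supplies a size of every prescribed minimum height. The perturbation step is the expected $\tfrac{\ep}{3}$-type estimate and presents no difficulty; the argument is essentially the action analog of the density reasoning used in Lemma~\ref{Intersect}.
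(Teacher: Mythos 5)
Your proof is correct and follows essentially the same route as the paper, which takes the intersection over all finite subsets of a countable dense set, all $\ep = \tfrac{1}{r},$ and all $n,$ and then simply asserts that every action in this intersection ``clearly'' has the Rokhlin property. You have merely written out the routine density and perturbation step that the paper leaves implicit, and your handling of the quantifier order in Definition~\ref{D-2908-Zd-RP} (fixing the size $m(n)$ of minimum height at least $N$ before quantifying over $F$ and $\ep$) is exactly right.
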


\begin{proof}
Choose a countable dense subset $S \S A,$
and let ${\mathcal{F}}$ be the set of all finite subsets of~$S.$
Then clearly
every action
\[
\af \in \bigcap_{F \in {\mathcal{F}}}
            \bigcap_{r = 1}^{\I} \bigcap_{n = 1}^{\I}
            W_d \left( F, \tfrac{1}{r}, s_n, m (n) \right)
\]
has the Rokhlin property.
\end{proof}

\begin{lem}\label{L-2910PIApprox}
Let $A$ be a separable unital \ca\  %
such that ${\mathcal{O}}_{\I} \otimes A \cong A.$
Let $d \in \N,$ let $F \S A$ be finite, let $\ep > 0,$
and let $n \in \N.$
Let $m$ be the $2^d$-tuple consisting of
all elements of
$\{ n, \, n + 1 \}^d,$
arranged in any order.
Then $W_d (F, \ep, 2^d, m)$ is dense in ${\mathrm{Act}}_d (A).$
\end{lem}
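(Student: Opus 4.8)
The plan is to mirror the proof of Proposition~\ref{P:TRPDense}, with the Jiang-Su algebra and the automorphism $\ph$ of Corollary~\ref{C:Z} replaced by ${\mathcal{O}}_{\I}$ and a model action of $\Z^d$ on ${\mathcal{O}}_{\I}$ whose (cyclic) Rokhlin towers have size exactly~$m.$ Fixing $\af \in {\mathrm{Act}}_d (A),$ a finite set $F,$ and $\ep > 0,$ I must produce $\bt \in {\mathrm{Act}}_d (A)$ lying in $W_d (F, \ep, 2^d, m)$ and arbitrarily close to $\af$ in $\rh_{d, S}.$ Two ingredients are needed. First, since ${\mathcal{O}}_{\I}$ is strongly self-absorbing and ${\mathcal{O}}_{\I} \otimes A \cong A,$ the embedding $\io_A \colon A \to {\mathcal{O}}_{\I} \otimes A,$ $\io_A (a) = 1 \otimes a,$ is approximately unitarily equivalent to an isomorphism $\gm \colon A \to {\mathcal{O}}_{\I} \otimes A$ (the ${\mathcal{O}}_{\I}$-analog of Theorem~\ref{T:PrpOfZ}(\ref{StSelfAbs})), so there are unitaries $u_N \in {\mathcal{O}}_{\I} \otimes A$ with $\limi{N} \| u_N (1 \otimes a) u_N^* - \gm (a) \| = 0$ for all $a \in A.$ Second, a model action $\mu \colon \Z^d \to \Aut ({\mathcal{O}}_{\I})$ for which, given any $\ep' > 0,$ there are \mops\  $(e_l)$ indexed by $\Z^d / m \Z^d$ whose supports sum to $1$ and with $\| \mu_{\dt_k} (e_l) - e_{l + \dt_k} \| < \ep'$ for all $l$ and all $k$ (indices cyclic mod~$m$).

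Granting $\mu,$ the transfer is routine. Put $w = \gm^{-1} (u_N)$ and
\[
\bt = \Ad (w) \circ \gm^{-1} \circ (\mu \otimes \af) \circ \gm \circ \Ad (w^{-1}),
\]
where $(\mu \otimes \af)_{\dt_k} = \mu_{\dt_k} \otimes \af_{\dt_k}.$ Because $\bt$ is the conjugate of the action $\mu \otimes \af$ by the single isomorphism $\Ad (w) \circ \gm^{-1} \colon {\mathcal{O}}_{\I} \otimes A \to A,$ it is again a genuine $\Z^d$-action, and it carries the cyclic Rokhlin system $(e_l \otimes 1)$ of $\mu \otimes \af$ to the system $\big( \Ad (w) \gm^{-1} (e_l \otimes 1) \big).$ These are exactly mutually orthogonal with supports summing to $1,$ and a direct computation shows $\bt_{\dt_k} \big( \Ad (w) \gm^{-1} (e_l \otimes 1) \big) = \Ad (w) \gm^{-1} \big( \mu_{\dt_k} (e_l) \otimes 1 \big),$ so they inherit the same $\ep'$-approximate cyclic equivariance from $\mu.$ Approximate centrality with respect to $F$ and the estimate $\bt_{\dt_k} (a) \to \af_{\dt_k} (a)$ both follow from $u_N^* \gm (a) u_N \to 1 \otimes a$ exactly as in Proposition~\ref{P:TRPDense}: the point is that $e_l \otimes 1$ commutes with $1 \otimes A,$ while $w^{-1} a w = \gm^{-1} (u_N^* \gm (a) u_N)$ is close to $\gm^{-1} (1 \otimes a).$ Choosing first $\ep'$ small and then $N$ large places $\bt$ in $W_d (F, \ep, 2^d, m)$ within any prescribed $\rh_{d, S}$-distance of $\af$; convergence of the inverses required by $\rh_S$ is automatic by the argument in Lemma~\ref{L:AutTop}.

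The substantive point, and the main obstacle, is the model action $\mu$ together with forcing its towers to have size exactly $m = \{ n, n+1 \}^d$ rather than merely large minimum height. The natural candidate is the tensor shift of $\Z^d$ on $\bigotimes_{\Z^d} {\mathcal{O}}_{\I} \cong {\mathcal{O}}_{\I},$ in the spirit of Section~\ref{Sec:TShift} but with genuine \pj s in place of positive elements; one must show it has the Rokhlin property in the cyclic sense of Definition~\ref{D-2908-Zd-RP}, producing, for every $N,$ cyclic towers all of whose side lengths exceed~$N.$ To reach the prescribed sizes I would then partition coordinatewise: since $\gcd (n, n+1) = 1,$ every integer $\geq n^2$ is a nonnegative combination $r n + s (n+1),$ so each side of a large box splits into intervals of lengths $n$ and $n+1,$ and taking products over the $d$ coordinates cuts each box into subboxes of every size in $\{ n, n+1 \}^d;$ summing the tower projections over each subbox gives towers of the required sizes. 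This is the $\Z^d$-analog of the tower-partitioning argument of Lemma~\ref{R-2909RZdRZ}. The one genuinely delicate feature, absent there, is that the towers must be \emph{cyclic}, so the cutting must respect the wrap-around equivariance of each subbox; I expect to perform it inside the model on ${\mathcal{O}}_{\I}$ itself, where the shift makes the cyclic identifications transparent, so that tensoring with $1_A$ and conjugating by $\Ad (w) \circ \gm^{-1}$ transports it verbatim. Establishing the cyclic model with these exact sizes, compatibly with the $K_0$-constraints of ${\mathcal{O}}_{\I},$ is where the real work lies; once it is in hand, the transfer argument above finishes the proof.
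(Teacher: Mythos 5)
Your transfer argument---conjugating the product action $\mu \otimes \af$ on ${\mathcal{O}}_{\I}^{\otimes d} \otimes A$ by an isomorphism to $A$ that is approximately unitarily equivalent to $a \mapsto 1 \otimes a$, and letting the exact Rokhlin system $(e_l \otimes 1)$ ride along---is exactly what the paper does, and that part is sound. The gap is in the ingredient you yourself flag as ``where the real work lies'': the model action $\mu$ on ${\mathcal{O}}_{\I}$ with exact \emph{cyclic} towers of sizes precisely the elements of $\{ n, n+1 \}^d.$ Your proposed route to it (the tensor shift on $\bigotimes_{\Z^d} {\mathcal{O}}_{\I},$ followed by a coordinatewise partition of large boxes into subboxes of side lengths $n$ and $n+1$) does not work. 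Partitioning is exactly the device of Lemma~\ref{R-2909RZdRZ}, and it only produces towers in the sense of Definition~\ref{D-IzumiRP}, which imposes no condition relating the top of a tower to its own base. A cyclic $(m, F, \ep)$-tower requires $\af_{\dt_k}(e_l) \approx e_{l + \dt_k}$ with addition mod $m \Z^d,$ including the wrap-around; if you cut a long cyclic tower into intervals of lengths $n$ and $n+1,$ the automorphism carries the top slice of each interval near the bottom slice of the \emph{next} interval, not of the same one, so the subtowers are not cyclic. No amount of working ``inside the model where the shift makes the cyclic identifications transparent'' repairs this; the obstruction is combinatorial. (Separately, you would also need to prove that the shift on $\bigotimes_{\Z^d} {\mathcal{O}}_{\I}$ has the Rokhlin property with genuine projections, which the trace-based machinery of Section~\ref{Sec:TShift} cannot supply since ${\mathcal{O}}_{\I}$ has no tracial states.)

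The paper sidesteps all of this by building the model directly and finite-dimensionally. Since $K_0({\mathcal{O}}_{\I}) \cong \Z$ with $[1] = 1,$ one gets a unital embedding $\rh_0 \colon M_n \oplus M_{n+1} \to {\mathcal{O}}_{\I}$ by sending a rank one projection of $M_n$ to a projection of class $-[1]$ and a rank one projection of $M_{n+1}$ to one of class $[1],$ so that $n(-1) + (n+1)(1) = 1.$ This is precisely why the tuple $m$ must consist of all of $\{ n, n+1 \}^d$: a single $M_n$ admits no unital embedding. Taking the $d$-fold tensor power and letting $\gm_{\dt_k}$ be conjugation by the image of the cyclic permutation unitary $(v^{(0)}, v^{(1)})$ placed in the $k$th tensor factor, the minimal projections of $(M_n \oplus M_{n+1})^{\otimes d}$ decompose into $2^d$ exact cyclic towers, one of each size $\mu \in \{ n, n+1 \}^d,$ with supports summing to~$1.$ The cyclicity is automatic because permutation matrices implement it exactly; no partitioning, no shift, and no Rokhlin-property input for ${\mathcal{O}}_{\I}$ itself is needed.
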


\begin{proof}
It suffices to prove that for
every $\af \in {\mathrm{Act}}_d (A),$
every finite set $T \S A,$
and every $\dt > 0,$ there is $\bt \in W_d (F, \ep, 2^d, m)$
such that
$\| \bt_{\dt_k} (a) - \af_{\dt_k} (a) \| < \dt$
for $k = 1, 2, \ldots, d$ and for all $a \in T.$

Set $S = F \cup T \cup \bigcup_{k = 1}^d \af_{\dt_k} (T).$

By Theorem~3.15 of~\cite{KP} and Theorem~3.3 of~\cite{LP},
there is an isomorphism
$\ph_0 \colon {\mathcal{O}}_{\I} \otimes {\mathcal{O}}_{\I}
   \to {\mathcal{O}}_{\I}$
such that the map $a \mapsto \ph_0 (1 \otimes a)$ is
approximately unitarily equivalent to $\id_{{\mathcal{O}}_{\I}}.$
Replacing the first tensor factor ${\mathcal{O}}_{\I}$
in the domain
by the $d$-fold tensor product $({\mathcal{O}}_{\I})^{\otimes d}$
(which is isomorphic to ${\mathcal{O}}_{\I}$),
and using ${\mathcal{O}}_{\I} \otimes A \cong A,$
we get an isomorphism
$\ph \colon ({\mathcal{O}}_{\I})^{\otimes d} \otimes A \to A$
such that the map $a \mapsto \ph (1 \otimes a)$ is
approximately unitarily equivalent to $\id_{A}.$

It is easy to find an injective unital \hm\  %
$\rh_0 \colon M_n \oplus M_{n + 1} \to {\mathcal{O}}_{\I}.$
(For rank one \pj s $e \in M_n$ and $f \in M_{n + 1},$
it will send $(e, 0)$
to a projection $p \in {\mathcal{O}}_{\I}$ such that $[p] = - [1]$
in $K_0 ( {\mathcal{O}}_{\I} )$
and $(0, f)$ to a projection $q \in {\mathcal{O}}_{\I}$
such that $[q] = [1]$
in $K_0 ( {\mathcal{O}}_{\I} ).$)
Using cyclic permutation matrices in $M_n$ and $M_{n + 1},$
find \pj s $f_0^{(0)}, f_1^{(0)}, \ldots, f_{n - 1}^{(0)} \in M_n$
with $\sum_{j = 0}^{n - 1} f_j^{(0)} = 1,$
\pj s $f_0^{(1)}, f_1^{(1)}, \ldots, f_{n}^{(1)} \in M_{n + 1}$
with $\sum_{j = 0}^{n} f_j^{(1)} = 1,$
and unitaries $v^{(0)} \in M_n$ and $v^{(1)} \in M_{n + 1},$
such that
$v^{(0)} f_j^{(0)} \big(v^{(0)})^* = f_{j + 1}^{(0)}$
for $j = 0, 1, \ldots, n - 1$
(with indices taken mod~$n$)
and $v^{(1)} f_j^{(1)} \big(v^{(1)})^* = f_{j + 1}^{(1)}$
for $j = 0, 1, \ldots, n$
(with indices taken mod~$n + 1$).
Define
\[
\rh_1 = (\rh_0)^{\otimes d} \colon
   (M_n \oplus M_{n + 1})^{\otimes d}
    \to ({\mathcal{O}}_{\I})^{\otimes d}
\andeqn
\rh = \ph \circ \rh_1 \colon
   (M_n \oplus M_{n + 1})^{\otimes d}
    \to A.
\]

For $k = 1, 2, \ldots, d,$
define a unitary $w_k \in (M_n \oplus M_{n + 1})^{\otimes d}$
by
$w_k = z_1 \otimes z_2 \otimes \cdots \otimes z_d$
with $z_k = \big( v^{(0)}, v^{(1)} \big)$
and $z_j = 1$ for $j \neq k.$
Let
$\gm \colon
 \Z^d \to \Aut \big( ({\mathcal{O}}_{\I})^{\otimes d} \big)$
be the action such that
$\gm_{\dt_k} = \Ad ( \rh_1 (w_k))$ for $k = 1, 2, \ldots, d.$
For $\mu \in \{ n, \, n + 1 \}^d$ and $k = 1, 2, \ldots, d,$
set $\ep_k = \mu_k - n \in \{ 0, 1 \}.$
Define an exact $(\mu, \{ 1 \})$-Rokhlin tower $R_{\mu}$
for~$\gm$
by applying $\rh_1$ to the tensor product of the
towers $\big( f_j^{(\ep_k)} \big)_{j = 0}^{\mu_k - 1}$
in $M_{\mu_k} \S M_n \oplus M_{n + 1}.$
That is,
for $l \in \Z^d$
with $0 \leq l_k < \mu_k$ for $k = 1, 2, \ldots, d,$
set
\[
e_l = f_{l_1}^{(\mu_1 - n)} \otimes f_{l_2}^{(\mu_2 - n)}
       \otimes \cdots \otimes f_{l_d}^{(\mu_d - n)},
\]
regard $e_l$ as an element of $(M_n \oplus M_{n + 1})^{\otimes d},$
and set $R_{\mu} = ( \rh_1 (e_l) )_{l \in \Z^d / \mu \Z^d}.$
The towers $R_{\mu}$ together
form an exact $(\{ 1 \})$-system $R$ of Rokhlin towers for~$\gm$
of size~$m.$

Set $\ep_0 = \min \left( \frac{\ep}{2}, \frac{\dt}{2} \right).$
Choose $u \in A$ such that $\| u \ph (1 \otimes a) u^* - a \| < \ep_0$
for all $a \in S.$
Set $\ps = \Ad (u) \circ \ph,$
and for $l \in \Z^d$ set
\[
\bt_{l}
 = \ps \circ \big( \gm_{l} \otimes \af_{l} \big) \circ \ps^{-1}.
\]

We claim that $\bt \in W_d (F, \ep, 2^d, m).$
Tensor all \pj s in all towers in~$R$ with $1_A,$
to get a system
$R \otimes 1
 = \big( R_{\mu} \otimes 1 \big)_{\mu \in \{ n, \, n + 1 \}^d}$
of Rokhlin towers
for the action $\gm \otimes \af$ of size~$m$
which is an exact $G$-system for any
finite set
$G \S 1 \otimes A \S ({\mathcal{O}}_{\I})^{\otimes d} \otimes A.$
Set
\[
1 \otimes S
 = \{ 1 \otimes a \colon a \in S \}
 \S ({\mathcal{O}}_{\I})^{\otimes d} \otimes A.
\]
Then $R \otimes 1$
is an exact $1 \otimes S$-system of Rokhlin towers
for $\gm \otimes \af,$
and $u \ph (R \otimes 1) u^*$
(with the obvious meaning)
is an exact $u \ph (1 \otimes S) u^*$-system
of Rokhlin towers for~$\bt.$
Let $\mu \in \{ n, \, n + 1 \}^d.$
Write $u \ph (R_{\mu} \otimes 1) u^* = (q_l)_{l \in \Z^d / \mu \Z^d}.$
Let $l \in \Z^d / \mu \Z^d.$
We must show that
$\| \bt_{\dt_k} (q_l) - q_{l + \dt_k} \| < \ep$
for $k = 1, 2, \ldots, d$
(recall that addition is mod $\mu \Z^d$)
and that $\| a q_l - q_l a \| < \ep$ for all $a \in F.$
For the first,
we have $\bt_{\dt_k} (q_l) = q_{l + \dt_k}$ by construction.
For the second,
by construction $u \ph (1 \otimes a) u^*$ commutes exactly with $q_l.$
The choice of $u$ implies that
$\| u \ph (1 \otimes a) u^* - a \| < \ep_0 \leq \frac{\ep}{2}.$
Therefore $\| a q_l - q_l a \| < \ep.$
The claim is proved.

It remain to prove that
$\| \bt_{\dt_k} (a) - \af_{\dt_k} (a) \| < \dt$
for $k = 1, 2, \ldots, d$ and for all $a \in T.$
For any $l \in \Z^d$ and $a \in T,$
we have
\[
\| \bt_{l} (a) - \af_{l} (a) \|
  = \big\| u \ph \big( (\gm_{l} \otimes \af_{l})
                     \big( \ph^{-1} (u^* a u) \big) \big)
         - \af_{l} (a) \big\|.
\]
Now
\[
\| \ph^{-1} (u^* a u) - 1 \otimes a \|
  = \| a - u \ph (1 \otimes a) u^* \|
  < \ep_0
\]
and, using $\dt_k (a) \in S,$
\[
\big\| u \ph \big( (\gm_{\dt_k} \otimes \af_{\dt_k})
                 ( 1 \otimes a ) \big) u^*
         - \af_{\dt_k} (a) \big\|
 = \big\| u \ph ( 1 \otimes \af_{\dt_k} (a) ) u^*
                             - \af_{\dt_k} (a) \big\|
 < \ep_0.
\]
We conclude that
$\| \bt_{\dt_k} (a) - \af_{\dt_k} (a) \| < \ep_0 + \ep_0 \leq \dt,$
as desired.
This completes the proof.
\end{proof}

\begin{thm}\label{T-MainZd}
Let $A$ be a separable unital \ca\  %
such that ${\mathcal{O}}_{\I} \otimes A \cong A,$
and let $d \in \N.$
Then there exists a dense $G_{\dt}$-set in ${\mathrm{Act}}_d (A)$
consisting of actions with the Rokhlin property.
\end{thm}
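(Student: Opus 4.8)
The plan is to obtain the dense $G_{\dt}$-set by a Baire category argument, directly parallel to the proof of Theorem~\ref{Main} for the tracial Rokhlin property: I will exhibit a suitable set as a countable intersection of the open sets $W_d(F, \ep, s, m)$ from Notation~\ref{N-2910PISetNtn}, each of which I can arrange to be dense, and then invoke completeness of the metric together with the Baire category theorem.

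Concretely, first I would fix a countable dense subset $S \S A$ and let $\mathcal{F}$ be the collection of its finite subsets. The crucial choice is which tower sizes to use: for each $n \in \N$ I take $s_n = 2^d$ and let $m(n)$ be the $2^d$-tuple consisting of all elements of $\{ n, n + 1 \}^d$, which is precisely the tuple appearing in Lemma~\ref{L-2910PIApprox}. Its minimum height is $n$, so these minimum heights tend to~$\I$. I then set
\[
G = \bigcap_{F \in \mathcal{F}} \bigcap_{r = 1}^{\I} \bigcap_{n = 1}^{\I}
    W_d\big( F, \tfrac{1}{r}, 2^d, m(n) \big).
\]
Each factor is open by Lemma~\ref{L-2910PIOpen}, so $G$ is a $G_{\dt}$-set; each factor is dense by Lemma~\ref{L-2910PIApprox}; and since $\rh_{d, S}$ is a complete metric on ${\mathrm{Act}}_d (A)$ (Lemma~\ref{L-2910PIisMet}), the Baire category theorem makes $G$ dense. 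Finally, applying Lemma~\ref{L-2910PIIntersect} with these $s_n$ and $m(n)$ shows that every action in $G$ has the Rokhlin property, which completes the argument.

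The conceptually delicate point---and the reason this is not quite a verbatim copy of Theorem~\ref{Main}---is the quantifier structure of the Rokhlin property in Definition~\ref{D-2908-Zd-RP}: the tower sizes $m$ must be produced \emph{before} quantifying over finite sets $F$ and tolerances $\ep$. Thus, unlike the tracial Rokhlin property, the set of actions with the Rokhlin property is not transparently a countable intersection of $W_d$-sets, and Lemma~\ref{L-2910PIIntersect} only guarantees containment rather than equality. What rescues the argument is that Lemma~\ref{L-2910PIApprox} supplies a single, uniform family of sizes $m(n) = \{ n, n + 1 \}^d$ that is dense for \emph{all} $F$ and $\ep$ at once; this is exactly what lets the fixed-size intersection land inside the Rokhlin-property set. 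The genuinely hard work---the ${\mathcal{O}}_{\I}$-absorption perturbation producing the exact Rokhlin towers---is entirely inside Lemma~\ref{L-2910PIApprox}, so at the level of the theorem itself I expect no real obstacle; the remaining task is purely the assembly described above.
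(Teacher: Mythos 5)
Your proposal is correct and is essentially the paper's own proof: the paper likewise assembles Lemmas~\ref{L-2910PIOpen}, \ref{L-2910PIIntersect}, and~\ref{L-2910PIApprox} via the Baire Category Theorem, using exactly the tuples $m(n)$ from Lemma~\ref{L-2910PIApprox} with minimum height $n \to \I$. Your remark that Lemma~\ref{L-2910PIIntersect} gives only containment, which suffices because the theorem asserts existence of a dense $G_{\dt}$-set inside the Rokhlin-property actions, is an accurate reading of the argument.
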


\begin{proof}
This follows immediately from
Lemmas \ref{L-2910PIOpen}, \ref{L-2910PIIntersect},
and~\ref{L-2910PIApprox},
using the Baire Category Theorem
and the fact that the $2^d$-tuple $m$ of Lemma~\ref{L-2910PIApprox}
has arbitrarily large minimum height.
\end{proof}

\begin{cor}\label{T-Main3}
Let $A$ be a separable unital \ca\  %
such that ${\mathcal{O}}_{\I} \otimes A \cong A.$
Then there exists a dense $G_{\dt}$-set in $\Aut (A)$
consisting of automorphisms with the Rokhlin property
as in Definition~\ref{D-IzumiRP}.
\end{cor}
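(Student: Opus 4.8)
The plan is to derive this corollary from Theorem~\ref{T-MainZd} in the special case $d = 1,$ together with the identification of ${\mathrm{Act}}_1 (A)$ with $\Aut (A)$ and the comparison of the two versions of the Rokhlin property recorded in Lemma~\ref{R-2909RZdRZ}. The essential content has already been established; what remains is to match up the notions correctly.

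First I would observe that an action of $\Z$ on~$A$ is nothing but a single automorphism, namely its value $\af_{\dt_1}$ on the standard generator. Thus the identification of ${\mathrm{Act}}_d (A)$ with a subset of $\Aut (A)^d$ in Notation~\ref{N-2910Act} becomes, for $d = 1,$ a bijection ${\mathrm{Act}}_1 (A) \to \Aut (A)$ given by $\af \mapsto \af_{\dt_1}.$ Under this bijection the metric $\rh_{1, S}$ of Notation~\ref{N-2910Act} coincides with the metric $\rh_S$ of Notation~\ref{N-2910ZMetric}. Hence, by Lemma~\ref{L-2910PIisMet} and Lemma~\ref{L:AutTop}, the bijection is a homeomorphism when both sides carry the topology of pointwise norm convergence.

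Next I would apply Theorem~\ref{T-MainZd} with $d = 1,$ which is legitimate since ${\mathcal{O}}_{\I} \otimes A \cong A$ by hypothesis. This produces a dense $G_{\dt}$-set $G_0 \S {\mathrm{Act}}_1 (A)$ consisting of actions of~$\Z$ with the Rokhlin property of Definition~\ref{D-2908-Zd-RP}. Transporting $G_0$ across the homeomorphism of the previous paragraph yields a dense $G_{\dt}$-set $G \S \Aut (A),$ since a homeomorphism carries dense $G_{\dt}$-sets to dense $G_{\dt}$-sets.

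Finally I would invoke Lemma~\ref{R-2909RZdRZ}, according to which, for $d = 1,$ the Rokhlin property of Definition~\ref{D-2908-Zd-RP} implies the Rokhlin property of Definition~\ref{D-IzumiRP}. Consequently every automorphism in~$G$ has the Rokhlin property in the sense of Definition~\ref{D-IzumiRP}, which is exactly the assertion. There is no genuine obstacle here: the real work lies in Theorem~\ref{T-MainZd} (density, via Lemma~\ref{L-2910PIApprox}) and in the tower-partitioning argument of Lemma~\ref{R-2909RZdRZ}. The only point requiring care is the bookkeeping that matches the two inequivalent-looking definitions of the Rokhlin property and confirms that the identification ${\mathrm{Act}}_1 (A) \cong \Aut (A)$ is a homeomorphism, so that genericity is preserved.
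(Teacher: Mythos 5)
Your proposal is correct and follows the same route as the paper, which simply cites Theorem~\ref{T-MainZd} and Lemma~\ref{R-2909RZdRZ}; your explicit verification that the identification ${\mathrm{Act}}_1 (A) \cong \Aut (A)$ is a homeomorphism (so that dense $G_{\dt}$-sets transfer) is the only detail the paper leaves implicit, and you handle it correctly.
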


\begin{proof}
This follows immediately from
Theorem~\ref{T-MainZd}
and Lemma~\ref{R-2909RZdRZ}.
\end{proof}

\begin{thm}\label{T-RPUHF}
Let $r \in \N,$
and let $D = \bigotimes_{n = 1}^{\I} M_r$
be the $r^{\infty}$~UHF algebra.
Let $A$ be a separable unital \ca\  %
such that $D \otimes A \cong A,$
and let $d \in \N.$
Then there exists a dense $G_{\dt}$-set in ${\mathrm{Act}}_d (A)$
consisting of actions with the Rokhlin property.
\end{thm}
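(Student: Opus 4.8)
The plan is to run the proof of Theorem~\ref{T-MainZd} with $\mathcal{O}_{\I}$ replaced by~$D$ throughout. Two of its three ingredients transfer without any change: Lemma~\ref{L-2910PIOpen}, that each set $W_d (F, \ep, s, m)$ is open, and Lemma~\ref{L-2910PIIntersect}, that the actions with the Rokhlin property contain a suitable countable intersection of such sets, are both stated for an arbitrary separable unital \ca\ and use nothing special about~$\mathcal{O}_{\I}.$ The only statement that must be reproved is the density result, Lemma~\ref{L-2910PIApprox}: its proof builds an explicit model $\Z^d$-action on $(\mathcal{O}_{\I})^{\otimes d}$ out of the algebra $M_n \oplus M_{n + 1},$ and such an algebra need not admit a unital embedding into a UHF algebra.

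The substitute I would prove is the following. For every $d, j \in \N,$ writing $L = r^j$ and letting $m$ be the $1$-tuple whose single entry is $\mu = (L, L, \ldots, L) \in \Z^d,$ the set $W_d (F, \ep, 1, m)$ is dense in ${\mathrm{Act}}_d (A)$ for every finite $F \S A$ and every $\ep > 0.$ Since $D$ is a UHF algebra of infinite type it absorbs itself: there is an isomorphism $\ph_0 \colon D \otimes D \to D$ for which $a \mapsto \ph_0 (1 \otimes a)$ is approximately unitarily equivalent to $\id_D$ (the UHF analog of the property of $\mathcal{O}_{\I}$ used at the start of the proof of Lemma~\ref{L-2910PIApprox}). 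Replacing the first tensor factor by $D^{\otimes d} \cong D$ and using $D \otimes A \cong A$ exactly as there, I obtain an isomorphism $\ph \colon D^{\otimes d} \otimes A \to A$ with $a \mapsto \ph (1 \otimes a)$ approximately unitarily equivalent to $\id_A.$

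For the model action I would use a single cube in place of $M_n \oplus M_{n + 1}.$ Because $L = r^j$ divides a power of~$r,$ the matrix algebra $M_L$ embeds unitally in~$D;$ fix such an embedding $\rh_0 \colon M_L \to D,$ let $f_0, f_1, \ldots, f_{L - 1}$ be the images of its diagonal matrix units and $v \in \rh_0 (M_L)$ the cyclic shift, so that $v f_l v^* = f_{l + 1}$ with indices mod~$L.$ Set $\rh_1 = \rh_0^{\otimes d} \colon (M_L)^{\otimes d} \to D^{\otimes d},$ and for $k = 1, 2, \ldots, d$ let $w_k \in (M_L)^{\otimes d}$ be $v$ in the $k$th factor and $1$ elsewhere; the $w_k$ commute, so there is an action $\gm \colon \Z^d \to \Aut (D^{\otimes d})$ with $\gm_{\dt_k} = \Ad (\rh_1 (w_k)).$ For $l \in \Z^d / \mu \Z^d$ the projections $\rh_1 (f_{l_1} \otimes f_{l_2} \otimes \cdots \otimes f_{l_d})$ form a single exact $(\mu, \{ 1 \})$-Rokhlin tower for~$\gm$ with support~$1,$ that is, an exact $\{ 1 \}$-system of size~$\mu.$ From this point the argument of Lemma~\ref{L-2910PIApprox} applies verbatim: with $S = F \cup T \cup \bigcup_{k = 1}^d \af_{\dt_k} (T),$ choose $u$ with $\| u \ph (1 \otimes a) u^* - a \| < \ep_0$ for $a \in S,$ put $\ps = \Ad (u) \circ \ph$ and $\bt_l = \ps \circ (\gm_l \otimes \af_l) \circ \ps^{-1},$ transport the tower through $u \ph ( \, \cdot \, ) u^*$ to obtain the required $(F, \ep)$-system for~$\bt$ of size~$m,$ and check that $\| \bt_{\dt_k} (a) - \af_{\dt_k} (a) \| < \dt$ on~$T.$

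With density established, the theorem follows precisely as Theorem~\ref{T-MainZd} does: apply Lemma~\ref{L-2910PIIntersect} with $s_n = 1$ and $m (n)$ the $1$-tuple with entry $(r^n, \ldots, r^n),$ whose minimum height $r^n$ tends to~$\I,$ and intersect the sets $W_d (F, \tfrac{1}{k}, 1, m (n))$ over a countable dense family of finite sets~$F,$ over $k \in \N,$ and over $n \in \N;$ these are open by Lemma~\ref{L-2910PIOpen} and dense by the above, so the Baire Category Theorem yields a dense $G_{\dt}$-set of actions with the Rokhlin property. \textbf{The main point} to get right is the choice of model: a UHF algebra contains a unital copy of $M_k$ only when $k$ divides a power of~$r,$ so the heights $n, n + 1$ of Lemma~\ref{L-2910PIApprox} are unavailable. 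Observing that the admissible heights $r^j$ are nonetheless cofinal in~$\N$ lets one replace the $M_n \oplus M_{n + 1}$ device by a single cubic tower of side~$r^j;$ this is the only genuinely new content, as the analytic estimates are unchanged.
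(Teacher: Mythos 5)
Your proposal is correct and follows essentially the same route as the paper: the published proof likewise keeps Lemmas \ref{L-2910PIOpen} and \ref{L-2910PIIntersect} unchanged and reruns Lemma~\ref{L-2910PIApprox} with a single cubic tower of side $r^n$ built from the diagonal projections permuted by a cyclic permutation matrix in a unital copy of a matrix algebra inside~$D$, with $w_k$ equal to that unitary in the $k$th tensor factor. Your observation that the admissible heights $r^j$ are cofinal, so that one cube replaces the $M_n \oplus M_{n+1}$ device, is exactly the point the paper makes.
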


\begin{proof}
The proof is essentially the same as that of Theorem~\ref{T-MainZd}.
The difference is in Lemma~\ref{L-2910PIApprox},
where we take $m$ to be the $1$-tuple consisting of
$(r^n, r^n, \ldots, r^n) \in \Z^d.$
We still have an isomorphism $\ph \colon D^{\otimes d} \otimes A \to A$
which is approximately unitarily equivalent to $a \mapsto 1 \otimes a.$
We let $v \in M_{r^n}$ be a cyclic permutation matrix
and let the Rokhlin tower consist of the rank one \pj s which are
permuted.
We take
\[
w_k = 1 \otimes 1 \otimes \cdots 1 \otimes v \otimes 1 \otimes 1
            \otimes \cdots  \otimes 1
\]
with $v$ in the $k$th position.
We take $\ph_0 \colon M_r \to D$ to be the inclusion
$x \mapsto x \otimes 1$ of the first tensor factor.
Then $\gm_{\dt_k} \in \Aut (D)$ is $\Ad (\rh_1 (w_k)).$
\end{proof}

\begin{cor}\label{C-ZRPUHF}
Let $A$ be as in Theorem~\ref{T-RPUHF}.
Then there exists a dense $G_{\dt}$-set in ${\mathrm{Act}} (A)$
consisting of automorphisms with the Rokhlin property
as in Definition~\ref{D-IzumiRP}.
\end{cor}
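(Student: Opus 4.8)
The plan is to mimic exactly the derivation of Corollary~\ref{T-Main3}, replacing the appeal to Theorem~\ref{T-MainZd} by its UHF analog Theorem~\ref{T-RPUHF}. First I would specialize Theorem~\ref{T-RPUHF} to the case $d = 1,$ obtaining a dense $G_{\dt}$-set $G \S {\mathrm{Act}}_1 (A)$ consisting of actions of $\Z$ on $A$ with the Rokhlin property in the sense of Definition~\ref{D-2908-Zd-RP}. Under the identification of ${\mathrm{Act}}_1 (A) = {\mathrm{Act}} (A)$ with $\Aut (A)$ via $\af \mapsto \af_{\dt_1}$ (the image of the generator $1 \in \Z$), as set up in Notation~\ref{N-2910Act}, this $G$ is a dense $G_{\dt}$-set in $\Aut (A).$

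It then remains only to upgrade the form of the Rokhlin property. Here I would invoke Lemma~\ref{R-2909RZdRZ}, which asserts that for $d = 1$ the Rokhlin property of Definition~\ref{D-2908-Zd-RP} implies the Rokhlin property of Definition~\ref{D-IzumiRP}. Applying this to every element of $G,$ I conclude that the \emph{same} set $G$ consists of automorphisms having the Rokhlin property of Definition~\ref{D-IzumiRP}. Since $G$ is already a dense $G_{\dt}$-set, no further work is needed.

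I do not anticipate any genuine obstacle: the corollary is an immediate formal consequence of two earlier results, exactly parallel to the derivation of Corollary~\ref{T-Main3} from Theorem~\ref{T-MainZd} and Lemma~\ref{R-2909RZdRZ}. The only point requiring the slightest care is the bookkeeping identification of ${\mathrm{Act}} (A)$ (actions of $\Z$) with $\Aut (A)$ (single automorphisms), but this is built into Notation~\ref{N-2910Act} and is harmless. In particular, one does not even redo any of the tower-partitioning estimates; those are entirely contained in Lemma~\ref{R-2909RZdRZ}, whose hypotheses are furnished verbatim by the $d = 1$ instance of Theorem~\ref{T-RPUHF}.
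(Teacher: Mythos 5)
Your proposal is correct and coincides with the paper's own (one-line) proof: the corollary follows immediately from Theorem~\ref{T-RPUHF} with $d = 1$ together with Lemma~\ref{R-2909RZdRZ}, exactly as Corollary~\ref{T-Main3} follows from Theorem~\ref{T-MainZd} and the same lemma. Your remark about identifying ${\mathrm{Act}}_1(A)$ with $\Aut(A)$ via $\af \mapsto \af_{\dt_1}$ is the right way to read the statement and requires no further argument.
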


\begin{proof}
This follows immediately from
Theorem~\ref{T-RPUHF}
and Lemma~\ref{R-2909RZdRZ}.
\end{proof}

\begin{thm}\label{T-RPFinGp}
Let $G$ be a finite group of cardinality~$r,$
and let $D$ be the $r^{\infty}$~UHF algebra.
Let $A$ be a separable unital \ca,
and let ${\mathrm{Act}}_G (A)$ denote the set of actions of~$G$
on~$A.$
For any enumeration $S = (a_1, a_2, \ldots )$ of a countable dense
subset of~$A,$
the formula
\[
\rh_{G, S} (\af, \bt) = \sup_{g \in G} \rh (\af_g, \bt_g)
\]
defines a complete metric on ${\mathrm{Act}}_G (A)$
which induces the restriction to ${\mathrm{Act}}_G (A)$
of the product topology on $\Aut (A)^G.$
If $A$ is unital and $D \otimes A \cong A,$
then the set of actions with the Rokhlin property
is a dense $G_{\dt}$-set in ${\mathrm{Act}}_G (A).$
\end{thm}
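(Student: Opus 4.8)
The plan is to mirror the development for $\Z^d$ in Lemmas~\ref{L-2910PIisMet}--\ref{L-2910PIApprox} and Theorem~\ref{T-MainZd}, replacing the index set $\Z^d / m \Z^d$ throughout by the group $G$ itself. For the metric assertion, I would first note that ${\mathrm{Act}}_G (A)$ is a closed subset of $\Aut (A)^G$: if a sequence $\big( \af^{(j)} \big)$ converges coordinatewise to $\af,$ then for $g, h \in G$ and $a \in A$ one has $\af^{(j)}_g \big( \af^{(j)}_h (a) \big) \to \af_g ( \af_h (a))$ (using $\big\| \af^{(j)}_g \big\| \leq 1$) and $\af^{(j)}_{gh} (a) \to \af_{gh} (a),$ so the relation $\af_{gh} = \af_g \circ \af_h$ survives in the limit. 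Since $G$ is finite, $\rh_{G, S}$ is the maximum of $|G|$ copies of the complete metric $\rh_S$ of Notation~\ref{N-2910ZMetric}, so completeness of $\rh_{G, S}$ and the identification of its topology follow at once from Lemma~\ref{L:AutTop}, exactly as in Lemma~\ref{L-2910PIisMet}.

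For the second assertion I would use the Rokhlin property for finite groups in its usual form: $\af$ has it if for every finite $F \S A$ and every $\ep > 0$ there are \mops\ $(e_g)_{g \in G}$ with $\sum_{g \in G} e_g = 1,$ with $\| \af_g (e_h) - e_{gh} \| < \ep$ for all $g, h \in G,$ and with $\| e_g a - a e_g \| < \ep$ for all $g \in G$ and $a \in F.$ For each finite $F \S A$ and each $\ep > 0,$ let $W (F, \ep) \S {\mathrm{Act}}_G (A)$ be the set of $\af$ admitting such a family. Perturbing $\af$ slightly on the finitely many elements $e_g$ preserves the first relation, and the other two relations do not involve~$\af,$ so $W (F, \ep)$ is open exactly as in Lemma~\ref{Open}. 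Running $F$ through the finite subsets of a countable dense set and $\ep$ through $\{ 1 / k \colon k \in \N \},$ a routine $\tfrac{\ep}{3}$~argument shows that the Rokhlin-property actions are precisely the corresponding countable intersection of the sets $W (F, \ep),$ which is therefore a $G_{\dt}$-set; this is the analog of Lemma~\ref{L-2910PIIntersect}.

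The substantive step is density of each $W (F, \ep),$ the analog of Lemma~\ref{L-2910PIApprox}, and here the model action comes from the left regular representation. Write $D = \bigotimes_{n = 1}^{\I} M_r \cong M_r \otimes D,$ identify the first factor $M_r = M_{|G|}$ with the bounded operators on $\ell^2 (G),$ let $\ld \colon G \to U \big( \ell^2 (G) \big)$ be the left regular representation, and set $\gm_g = \Ad (\ld_g) \otimes \id_D.$ Writing $e_h \in M_r$ for the rank-one \pj\ onto the basis vector indexed by~$h,$ the family $\big( e_h \otimes 1_D \big)_{h \in G}$ is an \emph{exact} Rokhlin tower for $\gm,$ since $\ld_g \ld_h = \ld_{gh}$ gives $\gm_g (e_h \otimes 1_D) = e_{gh} \otimes 1_D$ and $\sum_{h \in G} e_h \otimes 1_D = 1.$ Using $D \otimes A \cong A$ together with strong self-absorption of~$D,$ I would fix an isomorphism $\ph \colon D \otimes A \to A$ for which $a \mapsto \ph (1 \otimes a)$ is approximately unitarily equivalent to $\id_A,$ choose a unitary $u \in A$ with $\| u \ph (1 \otimes a) u^* - a \|$ small for $a$ in a finite set $S \supseteq F \cup T \cup \bigcup_{g \in G} \af_g (T),$ put $\ps = \Ad (u) \circ \ph,$ and define $\bt_g = \ps \circ (\gm_g \otimes \af_g) \circ \ps^{-1}.$ Then $\big( \ps (e_h \otimes 1_A) \big)_{h \in G}$ is an exact Rokhlin tower for $\bt$ commuting exactly with each $\ps (1 \otimes a) = u \ph (1 \otimes a) u^*,$ hence approximately commuting with $F$; and the estimate of Lemma~\ref{L-2910PIApprox}, using $\gm_g (1_D) = 1_D$ and $\af_g (T) \S S,$ gives $\| \bt_g (a) - \af_g (a) \|$ small for $a \in T,$ so that $\bt \in W (F, \ep)$ lies arbitrarily close to~$\af$ in~$\rh_{G, S}.$

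With the three analogs in hand, ${\mathrm{Act}}_G (A)$ is a complete metric space and the Rokhlin-property actions form a countable intersection of dense open sets, so the Baire Category Theorem finishes the proof just as in Theorem~\ref{T-MainZd}. I expect the only genuine obstacle to be the density step, but it is in fact simpler here than in the $\Z^d$ case: a finite group carries a canonical exact Rokhlin tower inside a single copy of $M_{|G|}$ coming from the regular representation, so neither the combinatorial tower-splitting of Lemma~\ref{R-2909RZdRZ} nor a $d$-fold tensor model is needed---one tensor factor of~$D$ already supports the entire model action.
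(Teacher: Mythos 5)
Your proposal is correct and follows essentially the same route as the paper: the same closedness/completeness argument for ${\mathrm{Act}}_G (A)$ reducing to Lemma~\ref{L:AutTop}, the same open sets $W_G (F, \ep)$ and countable intersection, and the same density step built from the left regular representation acting by $\Ad$ on a single tensor factor $M_r \S D,$ transported via an isomorphism $\ph \colon D \otimes A \to A$ approximately unitarily equivalent to $a \mapsto 1 \otimes a.$ Your closing observation that no tower-splitting analog of Lemma~\ref{R-2909RZdRZ} is needed matches the paper's treatment exactly.
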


\begin{proof}
The proof of the part about the metric is the same as the proof
of Lemma~\ref{L-2910PIisMet}.

The proof of the second part
is similar to that of Theorem~\ref{T-MainZd}
and the lemmas used there.
For a finite set $F \S A$ and $\ep > 0,$
we define $W_G (F, \ep) \S {\mathrm{Act}}_G (A)$
to be the set of all
$\af \in {\mathrm{Act}}_G (A)$ such that
there are \pj s $e_g \in A$ for $g \in G$
with $\sum_{g \in G} e_g = 1,$
with $\| \af_g (e_h) - e_{g h} \| < \ep$ for all $g, h \in G,$
and with $\| e_g a - a e_g \| < \ep$ for all $g \in G$ and $a \in A.$
The sets $W_G (F, \ep)$ are open
by an argument similar to the proof of Lemma~\ref{Open}.
Choose a countable dense subset $S \S A,$
and let ${\mathcal{F}}$ be the set of all finite subsets of~$S.$
One checks that
$\af \in {\mathrm{Act}}_G (A)$
has the Rokhlin property \ifo\  %
$\af \in \bigcap_{F \in {\mathcal{F}}}
            \bigcap_{r = 1}^{\I} W_G \left( F, \tfrac{1}{r} \right).$

In the proof of the analog of Lemma~\ref{L-2910PIApprox},
we use an isomorphism $\ph \colon D \otimes A \to A$
(rather than from $D^{\otimes d} \otimes A$)
which is approximately unitarily equivalent to $a \mapsto 1 \otimes a.$
We take $\ph_1 \colon M_r \to D$ to be the inclusion
$x \mapsto x \otimes 1$ of the first tensor factor.
We define an action $\gm \colon G \to \Aut (D)$ as follows.
Identify $M_r$ with $L (l^2 (G)),$
let $g \mapsto w_g \in M_r$ be the left regular representation of~$G,$
and take $\gm_g = \Ad (\ph_1 (w_g)).$
The Rokhlin tower in $M_r$
is taken to be $(e_g)_{g \in G}$ with $e_g$ being the projection
onto the span of the standard basis vector in $l^2 (G)$
corresponding to $g \in G.$
The rest of the proof of Lemma~\ref{L-2910PIApprox}
goes through with the obvious changes.
\end{proof}

\end{document}